    \newcommand*{\@gobblenexttocentry}[9]{}
    \newcommand*{\@gobblenexttocentry}[4]{}
\newcommand*{\addsubsection}{%
    \addtocontents{toc}{\protect\@gobblenexttocentry}%
    \subsection*}
\newcommand{\mynewtheorem}[4]{
  % #1=env name
  % #2=displayed name
  % #3=counter to share
  % #4=counter to obey
  \if\relax\detokenize{#3}\relax %#3 empty
    \if\relax\detokenize{#4}\relax %#4 empty
      \newtheorem{#1}{#2}
    \else
      \newtheorem{#1}{#2}[#4]
    \fi
  \else
    \newaliascnt{#1}{#3}
    \newtheorem{#1}[#1]{#2}
    \aliascntresetthe{#1}
  \fi
  \expandafter\def\csname #1autorefname\endcsname{#2}
}
\def\equationautorefname~#1\null{(#1)\null}
\newcommand*{\Xfamily}{Clover family} %{Wollmilchsau family}
\newcommand*{\Yfamily}{Windmill family}
\newcommand*{\maxD}{300}
\def\defbb#1{\expandafter\def\csname#1\endcsname{\mathbb{#1}}}
\def\defcal#1{\expandafter\def\csname#1\endcsname{\mathcal{#1}}}
\def\deffrak#1{\expandafter\def\csname#1\endcsname{\mathfrak{#1}}}
\def\defop#1{\expandafter\def\csname#1\endcsname{\operatorname{#1}}}
\def\defcals#1{\@defcals#1\@nil}
\def\@defcals#1{\ifx#1\@nil\else\defcal{#1}\expandafter\@defcals\fi}
\def\deffraks#1{\@deffraks#1\@nil}
\def\@deffraks#1{\ifx#1\@nil\else\deffrak{#1}\expandafter\@deffraks\fi}
\def\defbbs#1{\@defbbs#1\@nil}
\def\@defbbs#1{\ifx#1\@nil\else\defbb{#1}\expandafter\@defbbs\fi}
\def\defops#1{\@defops#1,\@nil}
\def\@defops#1,#2\@nil{\if\relax#1\relax\else\defop{#1}\fi\if\relax#2\relax\else\expandafter\@defops#2\@nil\fi}
\newcommand{\HH}{\mathbb{H}}
\newcommand{\CC}{\mathcal{C}}
\newcommand{\Hcal}{\mathcal{H}}
\newcommand{\ZZ}{\mathcal{Z}}
\newcommand{\PrymX}[1][]{\mathcal{P}\PiX_{#1}}
\newcommand{\PrymY}[1][]{\mathcal{P}\PiY_{#1}}
\newcommand{\EllX}[1][]{\mathcal{E}\PiX_{#1}}
\newcommand{\EllY}[1][]{\mathcal{E}\PiY_{#1}}
\newcommand{\EX}{E^{\X}}
\newcommand{\EY}{E^{\Y}}
\protected\def\W{\@ifnextchar[{\W@arg}{W_D}}%]
\def\W@arg[#1]{%
  \def\@D{D}%
  \def\c@W##1,##2,##3\@nil{\def\c@D{##1}\def\n@D{##2}}\c@W#1,,\@nil%
  \def\@W##1=##2=##3\@nil{\def\@DD{##1}\def\@DDD{##2}}\expandafter\@W\c@D==\@nil%
  \ifx\@D\@DD W_{\@DDD}%
    \expandafter\ifx\n@D\relax\relax\else(\n@D)\fi%
  \else W_D(#1)\fi%
}
\protected\def\parexp#1{\@ifnextchar^{(#1)}{#1}}
\protected\def\parind#1{\@ifnextchar_{(#1)}{#1}}
\newcommand{\alphaX}{\parexp{\alpha^\X}}
\newcommand{\alphaY}{\parexp{\alpha^\Y}}
\newcommand{\alphaZ}{\parexp{\alpha^\ZZ}}
\newcommand{\betaX}{\parexp{\beta^\X}}
\newcommand{\betaY}{\parexp{\beta^\Y}}
\newcommand{\rhoX}{\parexp{\rho^\X}}
\newcommand{\rhoY}{\parexp{\rho^\Y}}
\newcommand{\PrymVar}{\mathcal{P}(X,\rho)}
\newcommand{\PX}{\mathcal{P}(\X_t)}
\newcommand{\PY}{\mathcal{P}(\Y_t)}
\newcommand{\omegaX}{\parexp{\omega^\X}}
\newcommand{\omegaY}{\parexp{\omega^\Y}}
\newcommand{\PiX}{\Pi^\X}
\newcommand{\PiY}{\Pi^\Y}
\newcommand{\FX}{F^\X}
\newcommand{\GX}{G^\X}
\newcommand{\FY}{F^\Y}
\newcommand{\GY}{G^\Y}
\newcommand{\dF}[1]{F^{#1}}
\newcommand{\dG}[1]{G^{#1}}
\newcommand{\Sym}{\mathfrak{S}}
\renewcommand{\i}{\mathrm{i}}
\renewcommand{\d}[1]{\mathrm{d}#1}
\renewcommand{\H}%{H}
{\mathrm{H}}
\def\abs#1{\lvert#1\rvert}
\begin{document}

\title{Orbifold points on Prym-Teichmüller curves in genus three}

\author{David Torres-Teigell}
\thanks{The first-named author was supported by the Alexander von Humboldt Foundation.}
\address{FB 12 -- Institut für Mathematik\\Johann Wolfgang Goethe-Universität\\Robert-Mayer-Str. 6--8\\D-60325 Frankfurt am Main}
\curraddr{Departamento de Matem\'{a}ticas, Universidad Aut\'{o}noma de Madrid\\28049 Madrid, Spain}
\email{david.torres@uam.es}

\author{Jonathan Zachhuber}
\thanks{The second-named author was partially supported by ERC-StG 257137.}
\address{FB 12 -- Institut für Mathematik\\Johann Wolfgang Goethe-Universität\\Robert-Mayer-Str. 6--8\\D-60325 Frankfurt am Main}
\email{zachhuber@math.uni-frankfurt.de}

\begin{abstract}
  Prym-Teichmüller curves $\W[4]$ constitute the main examples of known primitive Teichmüller curves in the moduli space $\M_3$. 
  We determine, for each non-square discriminant $D>1$, the number and type of orbifold points in $\W[4]$. These results, together with the formulas of Lanneau-Nguyen and Möller for the number of cusps and the Euler characteristic, complete the topological characterisation of Prym-Teichmüller curves in genus 3.

  Crucial for the determination of the orbifold points is the analysis of families of genus 3 cyclic covers of degree $4$ and $6$, branched over four points of $\P^1$. As a side product of our study, we provide an explicit description of the Jacobians and the Prym-Torelli images of these two families, together with a description of the corresponding flat surfaces.
\end{abstract}

\maketitle

\tableofcontents

\section{Introduction}\label{sec:intro}

A \emph{Teichmüller curve} is an algebraic curve in the moduli space $\M_g$ of genus $g$ curves that is totally geodesic for the Teichmüller metric. Teichmüller curves arise naturally from \emph{flat surfaces}, i.e.\ elements $(X,\omega)$ of the bundle $\Omega\M_g$ over $\M_g$, consisting of a curve $X$ with a holomorphic $1$-form $\omega\in\Omega(X)$. The bundle $\Omega\M_g$ is endowed with an $\SL_2(\R)$-action, defined by affine shearing of the flat structure induced by the differential. In the rare case that the closure of the projection to $\M_g$ of the $\SL_2(\R)$-orbit of an element $(X,\omega)$ is an algebraic curve, i.e. that $(X,\omega)$ has many real symmetries, we obtain a Teichmüller curve.

Only few examples of families of (primitive) Teichmüller curves are known, see \cite{mcmSL2R}, \cite{mcmprym}, \cite{kenyonsmillie} and \cite{bouwmoeller}. 
In genus 2, McMullen was able to construct the \emph{Weierstraß curves}, and thereby classify all Teichmüller curves in $\M_2$ by analysing when the Jacobian of the flat surface admits real multiplication that respects the $1$-form. However, for larger genus, requiring real multiplication on the entire Jacobian is too strong a restriction. By relaxing this condition he constructed the \emph{Prym-Teichmüller curves} $\W[4]$ in genus $3$ and $\W[6]$ in genus $4$ (see \autoref{sec:orbifoldbg} for definitions). Recent results suggest that almost all Teichmüller curves in genus $3$ are of this form, see \cite{ANW}, \cite{matheuswright}, \cite{nguyenwright} and \cite{aulicinonguyen}.

While the situation for genus $2$ is fairly well understood, things are less clear for higher genus. As curves in $\M_g$, Teichmüller curves carry a natural orbifold structure. As such, one is primarily interested in their homeomorphism type, i.e. the genus, the number of cusps, components, and the number and type of orbifold points. In genus two, this was solved for the Weierstraß curves by McMullen~\cite{mcmTCingenustwo}, Bainbridge~\cite{bainbridgeeulerchar} and Mukamel~\cite{mukamelorbifold}.

For the Prym-Teichmüller curves in genus $3$ and $4$, the Euler characteristics were calculated by Möller~\cite{moellerprym} and the number of components and cusps were counted by Lanneau and Nguyen~\cite{lanneaunguyen}. The primary aim of this paper is to describe the number and type of orbifold points occurring in genus $3$, thus completing the topological characterisation of $\W[4]$ for all (non-square) discriminants $D$ via the formula
    \begin{equation}\label{eq:invariants}
    2h_0-2g= \chi+C+\sum_{d} e_{d}\left(1-\frac{1}{d}\right)
    \end{equation}
where $g$ denotes the genus of $\W[4]$, $h_0$ the number of components, $\chi$ the Euler characteristic, $C$ the number of cusps and $e_{d}$ the number of orbifold points of order $d$. As $\W[4]$ is either connected or the connected components are homeomorphic by \cite{components}, this characterises all Teichmüller curves inside the loci $\W[4]$.

Except for some extra symmetries occurring for small $D$, we describe the orbifold points in terms of integral solutions of ternary quadratic forms, which lie in some fundamental domain. 
More precisely, for any positive discriminant $D$, we define
\begin{align*}
\Hcal_{2}(D)\coloneqq \{(a,b,c) \in \Z^{3} : & \  a^{2}+b^{2}+c^{2}=D\ ,\ \gcd(a,b,c,f_{0})=1\,\},\text{ and} \\[0.2cm]
\Hcal_{3}(D)\coloneqq \{(a,b,c)\in\Z^{3} : & \
      2a^{2}-3b^{2}-c^{2}=2D\ ,\ \gcd(a,b,c,f_{0})=1\ , \\
      &\nonumber \ -3\sqrt{D}<a<-\sqrt{D}\ ,\ c < b \le 0\ , \\ 
      &\nonumber \ (4a - 3b - 3c <0)\vee(4a - 3b - 3c=0\ \wedge\ c < 3b)\,\}, 
\end{align*}
where $f_{0}$ denotes the conductor of $D$. The extra conditions in the definition of $\Hcal_{3}(D)$ restrict the solutions to a certain fundamental domain. In particular, even though the quadratic form is indefinite, these conditions ensure that the set $\Hcal_3(D)$ is finite for all $D$. %Note that, even though the quadratic form in \eqref{H3intro} is indefinite, the cone condition ensures that $\Hcal_3(D)$ is finite for all $D$.

\begin{theorem}\label{thm:mainthmshort}
For non-square discriminant~$D>12$, the Prym-Teichmüller curves $\W[4]$ for genus three have orbifold points of order $2$ or $3$.

More precisely, the number $e_{3}(D)$ of orbifold points of order 3 is $\abs{\Hcal_3(D)}$; the number $e_{2}(D)$ of orbifold points of order 2 is $\abs{\Hcal_2(D)}/24$ if $D$ is even and there are no points of order 2 when $D$ is odd.
%$D\equiv 0\mod 4$, and $0$ otherwise, i.e. if $D$ is odd.

%More precisely, there are exactly $\abs{\Hcal_3(D)}$ points of order $3$ on $\W[4]$ and if $D\equiv 1\mod 4$, there are, in addition, $\abs{\Hcal_2(D)}/24$ points of order $2$.

%If $D\equiv 1\mod 4$, there are no points of order $2$ and $\abs{\Hcal_3(D)}$ points of order $3$ on $\W[4]$.
%
%If $D\equiv 1\mod 4$, there are $\abs{\Hcal_2(D)}/24$ points of order $2$ and $\abs{\Hcal_3(D)}$ points of order $3$ on $\W[4]$.
%
%\q{Alternative: There are $\abs{\Hcal_3(D)}$ points of order $3$ on $\W[4]$.}
%
%\q{If $D\equiv$....}
%the number of such points on~$\W[4]$ is given by the formulas $e_{2}(D)$, $e_{3}(D)$, $e_{4}(D)$ and $e_{6}(D)$ defined in~\autoref{sec:orbifoldpts}.

The curve $\W[D=8](4)$ has one point of order $3$ and one point of order $4$; the curve $\W[D=12](4)$ has a single orbifold point of order $6$.
\end{theorem}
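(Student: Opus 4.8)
The plan is to realise the exceptional points as fibres of the \Xfamily\ and \Yfamily\ carrying automorphisms of higher order than the generic deck transformations. The guiding principle is that an orbifold point of order $d$ on $\W[4]$ corresponds to a flat surface $(X,\omega)$ admitting an automorphism $\phi$ with $\phi^{*}\omega=\zeta_{2d}\,\omega$ for a primitive $2d$-th root of unity, so that the derivative of the associated affine map is rotation by $\pi/d$, an elliptic element of order $d$ in $\PSL_2(\R)$. The order-$4$ deck transformation $\sigma$ of a Clover fibre $\X_t$ acts by $\sigma^{*}\omega=\i\,\omega=\zeta_4\omega$, producing the generic order-$2$ points, while the order-$6$ deck transformation $\psi$ of a Windmill fibre $\Y_t$ acts by $\psi^{*}\omega=\zeta_6\omega$, producing the generic order-$3$ points. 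To obtain an order-$4$ (resp.\ order-$6$) point one must exhibit a fibre on which $\sigma$ (resp.\ $\psi$) admits a square root $\phi\in\Aut(\X_t)$ (resp.\ $\Aut(\Y_t)$) acting on $\omega$ by $\zeta_8$ (resp.\ $\zeta_{12}$).

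For $D=8$ I would first observe that the generic order-$2$ count degenerates, $\abs{\Hcal_2(8)}/24=1/2$, signalling that the involution analysis fails and is replaced by a single point of higher order. I would then single out the Clover fibre on which $\sigma$ admits a square root $\phi$; this forces the four branch points of $\X_t\to\P^1=\X_t/\sigma$ into the symmetric position admitting an order-$2$ Möbius involution that lifts to $\phi$. Since $\phi^{*}\omega$ squares to $\sigma^{*}\omega=\i\,\omega$, one obtains $\phi^{*}\omega=\zeta_8\,\omega$ and $\phi^{4}=\rho$, hence an elliptic element of order $4$. Using the explicit description of the \Xfamily\ established earlier, I would verify that the trace field of this rotation, $\Q(\zeta_8+\zeta_8^{-1})=\Q(\sqrt{2})$, realises $\PX$ as having real multiplication by the maximal order $\O_8$, so that the fibre lies on $\W[D=8](4)$. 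The accompanying order-$3$ point is the generic Windmill contribution, which persists because $\abs{\Hcal_3(8)}=1$.

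For $D=12$ the parallel degeneration $\abs{\Hcal_2(12)}/24=1/3$ together with $\abs{\Hcal_3(12)}=0$ indicates that the order-$2$ and order-$3$ analyses collapse into a single order-$6$ point. I would identify the fibre carrying an order-$12$ automorphism $\phi$; such a surface lies simultaneously in both families, since $\phi^{3}$ is an order-$4$ (Clover) deck transformation and $\phi^{2}$ an order-$6$ (Windmill) deck transformation, with $\phi^{6}=\rho$. As $\phi^{*}\omega=\zeta_{12}\,\omega$, the induced elliptic element has order $6$. The trace field $\Q(\zeta_{12}+\zeta_{12}^{-1})=\Q(\sqrt{3})$ has discriminant $12$, placing the fibre on $\W[D=12](4)$ with real multiplication by $\O_{12}$ on its Prym variety; this dual nature explains the simultaneous collapse of both the $\Hcal_2$ and $\Hcal_3$ counts.

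The main obstacle is completeness together with the exact matching of the discriminant. Because the forms $\Hcal_2$ and $\Hcal_3$ degenerate at these two values of $D$, the generic parametrisation cannot be invoked and one must analyse the full automorphism group of each candidate fibre by hand. The two delicate points are showing that the square root $\phi$ exists only for the special branch configuration, hence is unique, and showing that no further automorphisms inflate the order beyond $8$ or $12$. Concretely this reduces to determining $\Aut$ of the relevant cyclic covers and pinning down the real-multiplication order by an explicit period computation, verifying in each case that $\phi^{4}=\rho$ (resp.\ $\phi^{6}=\rho$) recovers the Prym involution consistently with the non-integral generic counts recorded above.
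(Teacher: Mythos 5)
Your structural skeleton matches the paper's: orbifold points of order $n$ correspond to automorphisms $\sigma$ with $\sigma^*\omega=\zeta_{2n}\omega$ and $\sigma^n$ the Prym involution (\autoref{thm:orbifoldpts}, \autoref{orbifoldscyclic}), order-$2$ and order-$3$ points are generic intersections with the \Xfamily{} and \Yfamily{}, and orders $4$ and $6$ occur exactly at the Fermat and Wiman fibres, where the deck transformation acquires a square root (\autoref{thm:families}, \autoref{lem:families}). That part is sound. But the theorem you are asked to prove \emph{is} the counting formulas, and your proposal assumes them instead of deriving them: you invoke "the generic order-$2$ count $\abs{\Hcal_2(D)}/24$" and "the generic Windmill contribution $\abs{\Hcal_3(D)}$" as known quantities whose degeneration at $D=8,12$ you then interpret. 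The paper's actual work is to produce these formulas: compute the Prym--Torelli images explicitly (the \Xfamily{} collapses to a single $(1,2)$-polarised surface with $\End_\Q\cong M_2(\Q[\i])$, \autoref{prop:prymc4} and \autoref{EndPX}; the \Yfamily{} maps onto a Shimura curve for the maximal order in the quaternion algebra of discriminant $6$, \autoref{prop:shimura}), classify the self-adjoint endomorphisms squaring to $D$ --- which is where the ternary forms $a^2+b^2+c^2=D$ and $2a^2-3b^2-c^2=2D$ come from (\autoref{lem:sqrtDforb2}, \autoref{prop:conditionorbpt3}) --- check integrality of the rational representations, and divide by the degree of the parametrisation ($4$ distinguished differentials times a $6{:}1$ map for $\X$; a fundamental domain for $\Delta(2,6,6)$ for $\Y$). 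In particular, the claim that odd $D$ gives no order-$2$ points has no mechanism in your proposal at all; in the paper it is a parity check on $R_T(a,b,c)$ showing the rational representation is never integral when $D\equiv1\bmod4$ (\autoref{lem:0mod4}). Without this layer you have not proved any of the three displayed assertions for general $D$.

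The trace-field argument for the exceptional discriminants is also not sufficient as stated. The rotation number of the order-$8$ automorphism gives trace $2\cos(\pi/4)=\sqrt2$, hence only $\sqrt2\in\Q(\sqrt D)$, i.e.\ $D\in\{8,32,72,\dots\}$; likewise $\sqrt3\in\Q(\sqrt D)$ only forces $D\in\{12,48,108,\dots\}$. Pinning the Fermat point to $D=8$ and the Wiman point to $D=12$ requires showing that the real multiplication generated on the Prym variety is \emph{proper} for the maximal order, which again comes down to the explicit endomorphism computation (in the paper, the statement that the twelve eigenforms of $\Hcal_2(8)$ are exactly the elements of $\P\Omega(\X_2)^-(4)$, and the fact that $8$ and $12$ are fundamental discriminants so the $\gcd$ condition is vacuous). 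You flag the right delicate points --- uniqueness of the square root $\phi$ and the bound on $\Aut$ --- but these are the easier half; the missing half is the identification of which quadratic order acts, and on that your proposal only promises a verification.
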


Let us recall that $\W[4]$ is empty for $D\equiv 5\mod 8$ (see~\cite[Prop. 1.1]{moellerprym}).

%Let us skip for now the exceptional cases $D=8$ and $D=12$, that yield the only curves with points of order 4 and 6, and define the sets

%For $D\equiv1\bmod{4}$ the curve $\W[4]$ has no orbifold points of order 2. For $D\equiv0\bmod{4}$ the number $e_{2}(D)$ of such points is given by the cardinality of $\Hcal_{2}(D)$ multiplied by $1/24$.

%For each discriminant $D$, the number $e_{3}(D)$ of orbifold points of order 3 on~$\W[4]$ agrees with the cardinality of $\Hcal_{3}(D)$. This set parametrises a finite set of points on the curve $\HH/\Delta(2,6,6)$, the lift of which to the universal cover is in one to one correspondence with the set of integer solutions of the quadratic equation $2a^{2}-3b^{2}-c^{2}=2D$ which are coprime to $f_{0}$.

%\

\autoref{thm:mainthmshort} combines the content of \autoref{thm:orbpt2and4} and \autoref{thm:orbpt3and6}.
The topological invariants of $\W[4]$ for $D$ up to \maxD{} are given in \autoref{tab:thetable} on page~\pageref{tab:thetable}.

Our approach to solving this problem is purely algebraic and therefore the use of tools from the theory of flat surfaces will be sporadic.

Two families of curves will play a special role in determining orbifold points on Prym-Teichmüller curves, namely the \emph{\Xfamily{}} and the \emph{\Yfamily{}}, which will be introduced in~\autoref{sec:cycliccovers}. They parametrise certain genus 3 cyclic covers of $\P^1$ of degree 4 and 6, respectively. There are two special points in these families, namely the \emph{Fermat curve} of degree 4, which is the only element of the \Xfamily{} with a cyclic group of automorphisms of order 8, and the exceptional \emph{Wiman curve} of genus 3, which is the unique intersection of the two families and the unique curve in genus $3$ that admits a cyclic group of automorphisms of order 12.

The fact that orbifold points in $\W[4]$ correspond to points of intersection with these two families will follow from the study of the action of the Veech group~$\SL(X,\omega)$ carried out in~\autoref{sec:orbifoldbg}. A consequence of this study is that orbifold points of order 4 and 6 correspond to the Fermat and Wiman curves, respectively, while points of order 2 and 3 correspond to generic intersections with the \Xfamily{} and the \Yfamily{}, respectively.

In order to determine these points of intersection, we will need a very precise description of the two families or, more precisely, of their images under the Prym-Torelli map. To this end, we explicitly compute the period matrices of the two families in \autoref{sec:Prym}. While the analysis of different types of orbifold points was rather uniform up to this point, the \Xfamily{} and the \Yfamily{} behave quite differently under the Prym-Torelli map. In particular, the Prym-Torelli image of the \Xfamily{} is constant.

\begin{theorem}\label{thm:Xfamily}The Prym-Torelli image of the \Xfamily{} $\X$ is isogenous to the point $E_{\i}\times E_{\i}$ in the moduli space $\A_{2,(1,2)}$ of abelian surfaces with $(1,2)$-polarisation, where $E_{\i}$ denotes the elliptic curve corresponding to the square torus $\C/(\Z\oplus\Z\i)$. %\q{isogenous?}
Orbifold points on $\W[4]$ of order 2 and 4 correspond to intersections with this family.
\end{theorem}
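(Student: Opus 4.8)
The plan is to compute the Prym--Torelli image directly from an explicit cyclic model and to read off its constancy from the eigenvalues of the order-$4$ automorphism. In suitable affine coordinates a member $\X_t$ of the \Xfamily{} is the $\Z/4$-cover $y^{4}=x(x-1)(x-t)$ of $\P^1$, branched over $0,1,t,\infty$, with generating automorphism $\rho\colon(x,y)\mapsto(x,\i y)$ and Prym involution $\sigma=\rho^{2}\colon(x,y)\mapsto(x,-y)$. A Riemann--Hurwitz count gives $g(\X_t)=3$ and $g(\X_t/\sigma)=1$, so $\X_t/\sigma$ is elliptic and the Prym $\PX$ is the $2$-dimensional anti-invariant part of $\Jac(\X_t)$ under $\sigma$.

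The heart of the argument is the eigenspace decomposition of $H^{0}(\X_t,\Omega^{1})$ under $\rho$. A basis of holomorphic differentials is $\d x/y^{2}$, $\d x/y^{3}$ and $x\,\d x/y^{3}$; since $\rho^{*}(x^{a}\,\d x/y^{k})=\i^{-k}\,x^{a}\,\d x/y^{k}$, the corresponding $\rho^{*}$-eigenvalues are $-1$, $\i$ and $\i$. The invariant form $\d x/y^{2}$ descends to $\X_t/\sigma$, while the two anti-invariant forms $\d x/y^{3}$ and $x\,\d x/y^{3}$ span $H^{1,0}(\PX)$. The crucial observation is that $\rho^{*}$ acts on this entire $2$-dimensional space as the single scalar $\i$ --- the $(-\i)$-eigenspace is empty --- rather than splitting as $\i\oplus(-\i)$. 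This is the rigidity that makes the family constant.

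From $\rho^{2}=\sigma=-1$ on the anti-invariant homology one obtains $\Z[\i]\subset\End(\PX)$ with $\i$ acting as $\rho$; and since $\rho^{*}$ is the scalar $\i$ on $H^{1,0}(\PX)$, hence $-\i$ on $H^{0,1}(\PX)$, the endomorphism $\rho$ coincides with the complex structure $J$ defining $\PX$. Thus $H_{1}(\PX,\Z)$ is a module over the PID $\Z[\i]$, free of rank $2$, and choosing a $\Z[\i]$-basis exhibits $\PX=\C^{2}/H_{1}(\PX,\Z)$ as $(\C/\Z[\i])^{2}=E_{\i}\times E_{\i}$. Since this eigenvalue pattern, and hence the CM type, is the same for every $t$, the Prym--Torelli image does not depend on the modulus. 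To place it in $\A_{2,(1,2)}$ I would restrict the principal polarisation of $\Jac(\X_t)$ to $\PX$: as $\X_t\to\X_t/\sigma$ is a double cover ramified at the four fixed points of $\sigma$, the induced polarisation is of type $(1,2)$, which accounts for the word \emph{isogenous} in the statement.

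The second assertion, that orbifold points of $\W[4]$ of order $2$ and $4$ are exactly the intersections with $\X$, I would deduce from the analysis of the Veech group in \autoref{sec:orbifoldbg}: an orbifold point of order $n$ forces an automorphism of the flat surface of order $2n$ commuting with the Prym involution, and for $n=2,4$ such an automorphism realises $\X_t$ (respectively the Fermat curve) as the underlying curve. The main obstacle I anticipate is not the isogeny statement, which follows cleanly from the scalar action of $\rho$, but the bookkeeping required to pin down the exact polarisation type: this means tracking the intersection form on $H_{1}(\X_t,\Z)$ through the eigenspace decomposition rather than merely its rank, and is precisely where the explicit period-matrix computation of \autoref{sec:Prym} does the work.
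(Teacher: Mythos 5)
Your argument is correct, but it reaches the constancy of the Prym--Torelli image by a genuinely different and more conceptual route than the paper. The paper proves \autoref{thm:Xfamily} via \autoref{prop:prymc4}: it runs Bolza's method using not only the order-$4$ automorphism but also the two extra involutions $\gamma,\delta$ (lifts of $z\mapsto t/z$ and $z\mapsto \frac{tz-t}{z-t}$), derives explicit linear relations among the periods $f_i,g_i$, and exhibits a $t$-independent period matrix $\PrymX$ after the change of basis $Q_t$; the identification with $E_\i\times E_\i$ then comes from the quotients $\X_t/\gamma\cong\X_t/\delta\cong E_\i$. You instead observe that the order-$4$ automorphism acts on $\Omega(\X_t)^-$ as the scalar $\i$ (the $(-\i)$-eigenspace being empty), so that it coincides with the complex structure on $\PX$; hence $\H_1^-(\X_t,\Z)$ is a torsion-free, rank-$2$ module over the PID $\Z[\i]$, and a $\Z[\i]$-basis (which is automatically a $\C$-basis, since otherwise the lattice would not span $\C^2$ over $\R$) identifies $\PX$ with $(\C/\Z[\i])^2=E_\i\times E_\i$ as a complex torus, uniformly in $t$. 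This is cleaner and in fact gives an isomorphism of tori rather than a mere isogeny, with the $(1,2)$-polarisation supplied by the standard ramified-double-cover computation (verified in the paper through the intersection matrix of \autoref{lem:hombasis}). What your argument does not deliver, and what the paper's explicit computation buys, is the concrete period matrix $\PrymX$ and the matrix $Q_t$ expressing the four differentials with $4$-fold zeros in the basis attached to $\PrymX$; this explicit bookkeeping is not needed for \autoref{thm:Xfamily} itself but is essential later, in \autoref{thm:orbpt2and4}, to match eigenforms for real multiplication with elements of $\P\Omega(\X_t)^-(4)$ and thereby count the orbifold points. The second assertion of the theorem you correctly reduce to \autoref{thm:orbifoldpts} and \autoref{thm:families}, exactly as the paper does.
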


In contrast, the image of the \Yfamily{} under the Prym-Torelli map lies in the Shimura curve of discriminant 6. We show this by giving a precise description of the endomorphism ring of a general member of this family (see~\autoref{prop:shimura}).

\begin{theorem}\label{thm:Yfamily}The closure of the Prym-Torelli image of the \Yfamily{} $\Y$ in $\A_{2,(1,2)}$ is the (compact) Shimura curve parametrising $(1,2)$-polarised abelian surfaces with endomorphism ring isomorphic to the maximal order in the indefinite rational quaternion algebra of discriminant 6. Orbifold points of $\W[4]$ of order 3 and 6 correspond to intersections with this family.
% the (compact) Shimura curve $\HH/\Delta(2,6,6)$ defined in~\autoref{prop:shimura}. A generic element of the family has period matrix and polarisation as in~\autoref{prop:prymc6} and its endomorphism ring is isomorphic to the maximal order in the indefinite rational quaternion algebra of discriminant 6. Orbifold points of $\W[4]$ of order 3 and 6 correspond to intersections with this family.
\end{theorem}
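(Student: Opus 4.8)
The plan is to reduce the statement to a computation of the endomorphism ring, carried out on the explicit period matrices of \autoref{sec:Prym}, followed by a dimension argument. The second assertion—that orbifold points of order $3$ and $6$ correspond to intersections with $\Y$—requires nothing new here: the Veech group analysis of \autoref{sec:orbifoldbg} already shows that order-$3$ points arise from generic members of the \Yfamily{} and order-$6$ points from the Wiman curve, the unique member admitting an automorphism of order $12$. Everything therefore rests on identifying the closure of $\PrymY$ in $\A_{2,(1,2)}$ with the Shimura curve of discriminant~$6$.

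First I would record the quaternionic structure. Each member of $\Y$ is a degree-$6$ cyclic cover of $\P^1$, so the generator of $\Gal\cong\Z/6\Z$ acts on the Prym variety $P$ and equips it with an embedding $\Z[\zeta_6]\hookrightarrow\End(P)$, i.e.\ an action of the imaginary quadratic field $\Q(\sqrt{-3})=\Q(\zeta_6)$. Using the explicit periods I would then exhibit one further endomorphism $j$, obtained by solving the Rosati- and lattice-compatibility conditions directly on the period matrix, which anticommutes with $\sqrt{-3}$. Together these generate a quaternion algebra $B=\End(P)\otimes\Q$ over $\Q$ containing $\Q(\sqrt{-3})$; since $B$ acts faithfully on the $2$-dimensional variety $P$, it is automatically indefinite (that is, $B\otimes\R\cong M_2(\R)$) and the general member is simple rather than isogenous to a product of CM elliptic curves. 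I would also verify that the polarisation induced by the Prym-Torelli map is of type $(1,2)$, so that $P$ genuinely lies in $\A_{2,(1,2)}$.

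The arithmetic heart of the argument, and the main obstacle, is pinning $B$ and its order down exactly: showing that $B$ is ramified at precisely $2$ and $3$—hence of discriminant $6$—and that $\End(P)$ is the maximal order. For the discriminant I would present $B$ in the form $\left(\frac{a,b}{\Q}\right)$ with $a,b$ read off from the explicit endomorphisms and compute the Hilbert symbols $(a,b)_p$ at each prime, verifying ramification exactly at $p=2,3$ and consistency with the embedding of $\Q(\sqrt{-3})$ (in which $2$ is inert and $3$ ramifies, so both are non-split, as required for a field inside $B$). Maximality of $\End(P)$ is then checked by an index and discriminant computation on the order generated by $\Z[\zeta_6]$ and $j$, ruling out any strictly larger order. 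This delicate computation is precisely the content of \autoref{prop:shimura}.

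Finally I would conclude by a dimension and completeness argument. Having quaternionic multiplication by the maximal order in $B$, the image $\PrymY$ lies in the associated Shimura curve $\mathcal{S}\subset\A_{2,(1,2)}$, which is irreducible of dimension~$1$. In contrast to the \Xfamily{}, whose Prym-Torelli image is a single point by \autoref{thm:Xfamily}, the \Yfamily{} genuinely moves—its period matrix depends non-trivially on the modulus—so $\PrymY$ is a non-constant, hence $1$-dimensional, subvariety of $\mathcal{S}$ and is therefore Zariski-dense in it. Since $B$ is a division algebra (its discriminant $6$ exceeds $1$), the Shimura curve $\mathcal{S}$ is compact by the classical theory, so the closure of $\PrymY$ acquires no cusps and must equal all of $\mathcal{S}$, which yields the theorem.
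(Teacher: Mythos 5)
Your proposal is correct and follows essentially the same route as the paper: the order-$3$/order-$6$ assertion is delegated to the Veech-group and branching-data analysis of Sections~2--3, and the Shimura-curve identification proceeds exactly as in \autoref{prop:shimura} --- explicit period matrix, the embedding $\Q(\sqrt{-3})\hookrightarrow\End_{\Q}\PY$ via $\mathbf{j}=2\alpha-1$, an explicit anticommuting element ($\mathbf{i}$ with $\mathbf{i}^{2}=2$, giving $\bigl(\tfrac{2,-3}{\Q}\bigr)$ of discriminant $6$), identification of the maximal order, Shimura's moduli construction, and a compactness/non-constancy argument to get surjectivity onto the (compact, irreducible) Shimura curve. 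The only implementation difference is that the paper pins down $\End\PY$ by checking which elements of the algebra have integral rational representation on the explicit lattice, rather than by a discriminant computation on the order generated by $\Z[\zeta_{6}]$ and the extra endomorphism; both work, since that generated order is already the maximal order $\mathfrak{M}$.
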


The relationship between the \Xfamily{}, the \Yfamily{}, and a Prym-Teichmüller curve is illustrated in \autoref{fig:curveintersection}.
\begin{figure}%[!htp]
\centering
\includegraphics{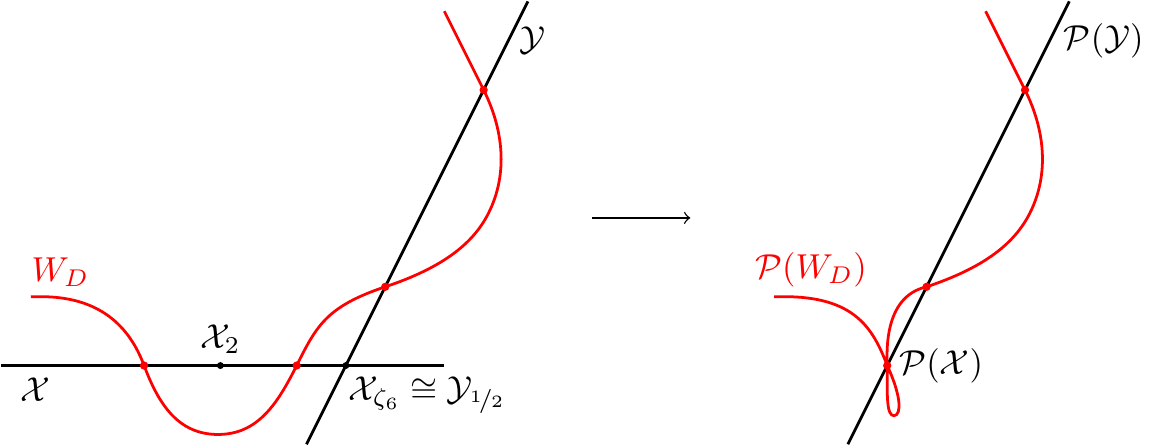}
\caption{The \Xfamily{}, the \Yfamily{}, and the curve $\W$ inside $\M_3$ and their image under the Prym-Torelli map in $\A_{2,(1,2)}$.}
\label{fig:curveintersection}
\end{figure}

In~\autoref{sec:orbifoldpts}, we finally determine the intersections of the Prym-Teichmüller curve $\W[4]$ with the \Xfamily{} and the \Yfamily{} by studying which points in their Prym-Torelli images admit real multiplication by the quadratic order $\O_{D}$ and by determining the corresponding eigenforms for this action. An immediate consequence is the following result.

\begin{cor}\label{cor:order4and6}The only Prym-Teichmüller curves in $\M_3$ with orbifold points of order 4 or 6 are $\W[D=8,4]$ of genus zero with one cusp, one point of order 3 and one point of order 4, and $\W[D=12,4]$ of genus zero with two cusps and one point of order~6.
\end{cor}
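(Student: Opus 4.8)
The plan is to combine the intersection analysis of \autoref{sec:orbifoldpts} with the existing formulas for the remaining topological invariants. As explained before \autoref{thm:mainthmshort}, orbifold points of order $4$ correspond precisely to the Fermat curve and orbifold points of order $6$ to the Wiman curve, each of which is a single point of $\M_3$. Hence $\W[4]$ carries an orbifold point of order $4$ (respectively $6$) if and only if it passes through the Fermat (respectively Wiman) point, and the task reduces to deciding for which discriminants this occurs.

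To decide this I would locate the two points under the Prym-Torelli map using \autoref{thm:Xfamily} and \autoref{thm:Yfamily}: the Fermat point lands on the constant image $E_\i\times E_\i\in\A_{2,(1,2)}$ of the \Xfamily{}, while the Wiman point lies on the discriminant-$6$ Shimura curve carrying the \Yfamily{}. The Fermat (respectively Wiman) curve then belongs to $\W[4]$ exactly when its Prym variety admits real multiplication by $\O_D$ with the distinguished eigenform, and running this eigenform computation — which is the content of \autoref{thm:orbpt2and4} and \autoref{thm:orbpt3and6} — shows that this happens only for $D=8$ in the Fermat case and only for $D=12$ in the Wiman case. Since orbifold points of order $4$ and $6$ are in bijection with the Fermat and Wiman points, this already yields that $\W[D=8,4]$ and $\W[D=12,4]$ are the only Prym-Teichmüller curves in $\M_3$ admitting an orbifold point of order $4$ or $6$; as a consistency check, \autoref{thm:mainthmshort} confirms that no such points arise for non-square $D>12$.

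It remains to record the genus and the number of cusps of these two curves. I would read off the number of components $h_0$ and cusps $C$ from Lanneau--Nguyen~\cite{lanneaunguyen}, obtaining $h_0=1$, $C=1$ for $D=8$ and $h_0=1$, $C=2$ for $D=12$, and take the orbifold Euler characteristic $\chi$ from Möller~\cite{moellerprym}. Substituting these, together with the orbifold data $e_3=e_4=1$, $e_2=e_6=0$ for $D=8$ and $e_6=1$, $e_2=e_3=e_4=0$ for $D=12$ from \autoref{thm:mainthmshort}, into \eqref{eq:invariants} yields
\begin{align*}
2-2g&=\chi+1+\tfrac{2}{3}+\tfrac{3}{4}\qquad(D=8),\\
2-2g&=\chi+2+\tfrac{5}{6}\qquad(D=12).
\end{align*}
Solving each relation for $g$ should return $g=0$, giving the full homeomorphism type asserted in the statement.

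The step I expect to be the main obstacle is the eigenform computation behind the determination of the admissible discriminants: one must verify that $\O_8$ (respectively $\O_{12}$) is the only real quadratic order embedding into the endomorphism ring of the Prym variety of the Fermat (respectively Wiman) curve compatibly with the $(1,2)$-polarisation and fixing the distinguished eigenform. For the Fermat curve this reduces to an analysis of the real multiplications available on $E_\i\times E_\i$, and for the Wiman curve to specialising the explicit endomorphism ring of a generic member of the \Yfamily{} described in \autoref{prop:shimura} to the Wiman point. Once these are in place, the remaining verification against the counting formulas is routine.
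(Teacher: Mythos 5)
Your proposal is correct and follows essentially the same route as the paper: the corollary is stated there as an immediate consequence of \autoref{thm:orbpt2and4} and \autoref{thm:orbpt3and6} (which establish that the Fermat and Wiman points carry proper real multiplication only for $D=8$ and $D=12$, respectively, via the $\gcd(a,b,c,f_0)=1$ condition), and the genus is then computed exactly as you do, by feeding the cusp count of Lanneau--Nguyen and Möller's Euler characteristic into \eqref{eq:invariants} — see Examples 1 and 2 in \autoref{sec:examples}. Your arithmetic checks out ($\chi(\W[D=8])=-5/12$ gives $2-2g=2$, and $\chi(\W[D=12])=-5/6$ likewise forces $g=0$).
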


Note that our result extends that of Mukamel in~\cite{mukamelorbifold} to genus $3$, although our approach and techniques differ in almost every detail. In the following we give a brief summary of the techniques used to classify orbifold points of Weierstraß curves in genus~$2$, to illustrate the similarities with and differences to our case.

The first difference is that, while in genus $2$ all curves are hyperelliptic, this is never the case for genus $3$ curves on Prym-Teichmüller curves by \autoref{nonhyp}. Luckily, the Prym involution is a satisfactory substitute in all essential aspects. In particular, while Mukamel obtains restrictions on the types of orbifold points in genus~$2$ by observing the action on the Weierstraß points, we acquire an analogous result in genus~$3$ by relating symmetries of Prym forms to automorphisms of elliptic curves (\autoref{thm:orbifoldpts}).

%From this point onward, however, the genus~$2$ and~$3$ situations begin to drift apart. 
At this point, however, the similarities between the genus $2$ and $3$ cases seem to end.
Mukamel shows that the orbifold points on genus~$2$ Weierstraß curves correspond to curves admitting an embedding of the dihedral group $D_8$ into their automorphism group and whose Jacobians are therefore isogenous to products of elliptic curves that admit complex multiplication.
He then identifies the space of genus~$2$ curves admitting a faithful $D_8$ action with the modular curve $\HH/\Gamma_0(2)$. In this model, the curves admitting complex multiplication are well-known to correspond to the imaginary quadratic points in the fundamental domain. Thus counting orbifold points in genus $2$ is equivalent to computing class numbers of imaginary quadratic fields, as in the case of Hilbert Modular Surfaces. Moreover, this period domain permits associating concrete flat surfaces to the orbifold points via his \enquote{pinwheel} construction.

By contrast, in genus~$3$, each orbifold point may lie on the \Xfamily{} \textit{or} the \Yfamily{} (\autoref{thm:families}). As mentioned above, these two cases behave quite differently. Moreover, in genus~$3$ we are no longer dealing with the entire Jacobian, but only with the Prym part, i.e. part of the Jacobian collapses and the remainder carries a non-principal~$(1,2)$ polarisation (see \autoref{sec:orbifoldbg}).
In particular, while in Mukamel's case the appearing abelian varieties could all be obtained by taking products of elliptic curves, in genus~3 one is forced to construct the Jacobians \enquote{from scratch} via Bolza's method (\autoref{sec:Prym}). In addition, the whole \Xfamily{} collapses to a single point under the Prym-Torelli map, making it more difficult to keep track of the differentials.
All this adds a degree of difficulty to pinpointing the actual intersection points of the \Xfamily{} and the \Yfamily{} with a given $\W[4]$. One consequence is that we obtain class numbers determining the number of orbifold points that are associated to slightly more involved quadratic forms (\autoref{sec:orbifoldpts}).

Finally, we provide flat pictures of the orbifold points of order $4$ and $6$ in \autoref{sec:flat}.

\addsubsection{Acknowledgements}
We are very grateful to Martin Möller not only for suggesting this project to us, but also for continuous support and patient answering of questions.
Additionally, we would like to thank Jakob Stix, André Kappes and Quentin Gendron for many helpful discussions, and Ronen Mukamel for sharing the computer code with the implementation of his algorithm in~\cite{mukamelfundamental} that we used in the last section.
We also thank~\cite{PARI2} and~\cite{sage} for computational help.

%%%%%%%%%%%%%%%%%%%%%%%%%%%%%

\section{Orbifold points on Prym-Teichmüller curves}\label{sec:orbifoldbg}

The aim of this section is to prove the following statement.

\begin{prop}\label{thm:orbifoldpts}
A flat surface $(X,\omega)$ parametrised by a point in $\W[4]$ is an orbifold point of order $n$ if and only if there exists $\sigma\in\Aut(X)$ of order $2n$ satisfying $\sigma^*\omega=\zeta_{2n}\omega$, where $\zeta_{2n}$ is some primitive order $2n$ root of unity.

The different possibilities are listed in~\autoref{tab:orders}.
\end{prop}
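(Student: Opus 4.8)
The plan is to read the orbifold structure of $\W[4]$ directly off the action of its Veech group on $\HH$, and then translate the resulting elliptic stabilisers into holomorphic automorphisms of $X$ that fix the line $\C\omega$.

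First I would recall that $\W[4]$ is uniformised as $\HH/\mathrm{P}\Gamma$, where $\Gamma\coloneqq\SL(X,\omega)$ is the Veech group of one of its flat surfaces and $\mathrm{P}\Gamma$ is its image in $\PSL_2(\R)$. The orbifold points are exactly the images of the elliptic fixed points of this action, and the order of such a point is the order of the corresponding stabiliser in $\mathrm{P}\Gamma$. Acting by a suitable element of $\SL_2(\R)$, I may assume that $(X,\omega)$ itself lies over $\i\in\HH$; since the $\SL_2(\R)$-stabiliser of $\i$ is the rotation group $\SO(2)$, the orbifold order at $(X,\omega)$ equals the order of the image of $\Gamma\cap\SO(2)$ in $\mathrm{P}\Gamma$. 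Everything therefore reduces to understanding the finite cyclic group $\Gamma\cap\SO(2)$.

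Next I would set up the correspondence with automorphisms through the derivative map $D\colon\Aff^+(X,\omega)\to\Gamma$. An affine automorphism whose derivative $R_\theta$ lies in $\SO(2)$ acts on the flat coordinate $z=\int\omega$ as the rotation $z\mapsto e^{\i\theta}z$; being conformal it is a holomorphic automorphism $\sigma$ of $X$, and reading off the action on $\omega=\d z$ gives $\sigma^*\omega=e^{\i\theta}\omega$. Conversely, any $\sigma\in\Aut(X)$ fixing the line $\C\omega$, say $\sigma^*\omega=\lambda\omega$, is an isometry of the flat metric (so $|\lambda|=1$, as $\Aut(X)$ is finite) and hence affine with $D\sigma=R_{\arg\lambda}$. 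To turn this into an order-preserving bijection between $\Gamma\cap\SO(2)$ and the subgroup of $\Aut(X)$ stabilising $\C\omega$, I would check that $D$ is injective here, i.e.\ that the only affine automorphism with trivial derivative is the identity, since otherwise the order of $\sigma$ could exceed that of $D\sigma$.

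It then remains pure numerology. The Prym involution $\rho$ satisfies $\rho^*\omega=-\omega$, so $R_\pi=-I\in\Gamma\cap\SO(2)$; as finite subgroups of $\SO(2)$ are cyclic, $\Gamma\cap\SO(2)$ is cyclic of even order $2n$ containing $-I$, and its image in $\mathrm{P}\Gamma$ is cyclic of order $n$. A generator $\sigma$ then has $D\sigma=R_{\pi/n}$, hence order exactly $2n$ by injectivity, and $\sigma^*\omega=\zeta_{2n}\omega$ with $\zeta_{2n}=e^{\i\pi/n}$ a primitive $2n$-th root of unity; conversely such a $\sigma$ forces $\Gamma\cap\SO(2)$ to contain a cyclic group of order $2n$ surjecting onto a $\Z/n\Z$ in $\mathrm{P}\Gamma$, so that $(X,\omega)$ is an orbifold point of order $n$. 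To finally obtain the explicit list of \autoref{tab:orders}, I would apply Riemann--Hurwitz to the cyclic quotient $X\to X/\langle\sigma\rangle$ together with the constraint that $\sigma$ preserve the one-dimensional space of Prym eigenforms, cutting the admissible orders $2n$ down to the finitely many entries recorded there.

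The main obstacle I expect lies in the bookkeeping of the last two steps. One must ensure that $\sigma$ is the \emph{generator} of the rotational stabiliser, equivalently the maximal-order automorphism fixing $\C\omega$: every divisor of $2n$ also yields an eigenform automorphism with a primitive eigenvalue of its own order, so only the top one records the true orbifold order, and this must be reconciled with the plain reading of the statement. Correctly accounting for the factor $2$ introduced by the ever-present Prym involution $-I$, and verifying that $\ker D$ is trivial for \emph{every} flat surface in $\W[4]$ rather than merely a generic one, are the points where the argument needs genuine care.
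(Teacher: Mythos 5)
Your proposal follows essentially the same route as the paper: orbifold points are identified with elliptic elements of the Veech group conjugated into $\SO(2)$, these are matched bijectively with holomorphic automorphisms $\sigma$ satisfying $\sigma^*\omega=\zeta\omega$ (injectivity of the derivative map holding because such a $\sigma$ fixes the unique zero of $\omega$), and the Prym involution, acting as $-I$, forces the rotational stabiliser to be cyclic of even order $2n$. The one place the paper does more than your sketch is Table~1: besides Riemann--Hurwitz it descends $\sigma$ to the elliptic curve $X/\rho$ to force $n\in\{2,3,4,6\}$, uses that $\omega$ has a single zero to guarantee a totally ramified point, and invokes the non-hyperellipticity of curves on $\W[4]$ to discard signatures such as $(0;2,2,2,4,4)$ --- these inputs, not merely preservation of the eigenform line, are what cut the list down to the stated entries.
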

\begin{table}[ht]
%\centering
\begin{tabular}{l c l}
& $\mathrm{ord}(\sigma)$ & Branching data \\\hline
(i) & 4 & $(0;4,4,4,4)$ \\
(ii) & 6 & $(0;2,3,3,6)$ \\
(iii) & 8 & $(0;4,8,8)$ \\
(iv) & 12 & $(0;3,4,12)$ \\
\end{tabular}
\caption{Possible orders of $\sigma$ and their corresponding branching data.}\label{tab:orders}
\end{table}

Before proceeding with the proof, we briefly recall some notation and background information.

%%%%
\addsubsection{Orbifold Points}
If $G$ is a finite group acting on a Riemann surface $X$ of genus $g\ge 2$, we define the \textit{branching data} (or signature of the action) as the signature of the orbifold quotient $X/G$, that is $\Sigma \coloneqq (\gamma;m_{1},\ldots,m_{r})$, where $\gamma$ is the genus of the quotient $X/G$ and the projection is branched over $r$ points with multiplicities~$m_{i}$.

Recall that an \emph{orbifold point} of an orbifold $\HH/\Gamma$ is the projection of a fixed point of the action of $\Gamma$, i.e. a point $s\in\HH$ so that
$\Stab_\Gamma(s)=\{A\in\Gamma : A\cdot s = s\}$
is strictly larger than the kernel of the action of $\Gamma$. Observe that this is equivalent to requiring the image of $\Stab_\Gamma(s)$ in $\PSL_2(\R)=\Aut(\HH)$, which we denote by $\PStab_\Gamma(s)$, to be non-trivial. We call the cardinality of $\PStab_\Gamma(s)$ the \emph{(orbifold) order} of~$s$.

In the case of a Teichmüller curve, the close relationship between the uniformising group $\Gamma$ and the affine structure of the fibres permits a characterisation of orbifold points in terms of flat geometry. To make this precise, we need some more notation.

%%%%
\addsubsection{Teichmüller curves}
Recall that a \emph{flat surface} $(X,\omega)$ consists of a curve $X$ together with a non-zero holomorphic differential form $\omega$ on $X$, which induces a flat structure by integration. Hence we may consider the moduli space of flat surfaces $\Omega\M_g$ as a bundle over the moduli space of genus $g$ curves $\M_g$.
Recall that there is a natural $\SL_2(\R)$ action on $\Omega\M_g$ by shearing the flat structure, which respects -- in particular -- the zeros of the differentials. Every Teichmüller curve arises as the projection to $\M_g$ of the (closed) $\SL_2(\R)$ orbit of some $(X,\omega)$. As $\SO(2)$ acts holomorphically on the fibres, we obtain the following commutative diagram
\[\begin{tikzpicture}
\matrix (m) [matrix of math nodes, row sep=1.5em, column sep=2.5em, text height=1.5ex, text depth=0.25ex]
{\SL_2(\R) & \Omega \M_g \\
 \HH \cong \SO(2)\backslash\SL_2(\R) &\P\Omega\M_g \\
\CC=\HH/\Gamma&\M_g \\};
\path[->,font=\scriptsize]
(m-1-1) edge (m-2-1)
(m-1-1) edge [above] node {$F$} (m-1-2)
(m-2-1) edge [above] node {$f$} (m-2-2)
(m-2-1) edge [below] node {} (m-3-1)
(m-3-1) edge (m-3-2)
(m-1-2) edge (m-2-2)
(m-2-2) edge [right] node {$\pi$} (m-3-2);
\end{tikzpicture}\]
where the map $F$ is given by the action $A\mapsto A\cdot(X,\omega)$ and $\CC$ is uniformised by
\[\Gamma=\Stab(f)\coloneqq\{A\in\SL_2(\R) : f(A\cdot t)=f(t)\;,\ \forall t\in\HH\}=\left(\begin{smallmatrix}-1&0\\0&1\\\end{smallmatrix}\right)\cdot\SL(X,\omega)\cdot\left(\begin{smallmatrix}-1&0\\0&1\end{smallmatrix}\right).\]
Here, $\SL(X,\omega)$ is the \emph{affine group} of $(X,\omega)$, i.e. the derivatives of homeomorphisms of $X$ that are affine with regard to the flat structure.

Given $t\in\HH$, we will write $A_t\in\SL_2(\R)$ for (a representative of) the corresponding element in $\SO(2)\backslash\SL_2(\R)$ and $(X_{t},\omega_{t})$ for (a representative of) $f(t)=[A_t\cdot(X,\omega)]\in \P\Omega\M_g$.

For proofs and details, see e.g.~\cite{tmcintro},~\cite{kucharczyk},~\cite{mcmtmchms}.

In the following, we will be primarily interested in a special class of Teichmüller curves.

%%%%
\addsubsection{Prym-Teichmüller curves}
To ensure that the $\SL_2(\R)$ orbit of a flat surface is not too large, the flat structure must possess sufficient real symmetries. McMullen observed that in many cases this can be achieved by requiring the Jacobian to admit real multiplication that ``stretches'' the differential. However, it turns out that for genus greater than $2$, requiring the whole Jacobian to admit real multiplication is too strong a restriction.

More precisely, for positive $D\equiv 0,1\bmod 4$ non-square, we denote by $\O_D=\Z[T]/(T^2+bT+c)$ with $D=b^2-4c$, the unique (real) quadratic order associated to $D$ and say that a (polarised) abelian surface $A$ has \emph{real multiplication} by $\O_D$ if it admits an embedding $\O_D\hookrightarrow\End(A)$ that is self-adjoint with respect to the polarisation. We call the real multiplication by $\O_D$ \emph{proper}, if the embedding cannot be extended to any quadratic order containing $\O_D$.

Now, consider a curve~$X$ with an involution~$\rho$. The projection $\pi\colon X\rightarrow X/\rho$ induces a morphism $\Jac(\pi)\colon\Jac(X)\rightarrow\Jac(X/\rho)$ of the Jacobians and we call the kernel~$\PrymVar$ of~$\Jac(\pi)$ the \emph{Prym variety} associated to~$(X,\rho)$. In the following, we will always require the Prym variety to be $2$-dimensional, hence the construction only works for $X$ of genus $2$, $3$, $4$ or $5$. Denoting by $\Omega(X)^{+}$ and $\Omega(X)^{-}$ the $+1$ and $-1$-eigenspaces of $\Omega(X)$ with respect to $\rho$, and by $\H_{1}^{+}(X,\Z)$ and $\H_{1}^{-}(X,\Z)$ the corresponding  intersections $\H_{1}(X,\Z)\cap(\Omega(X)^{\pm})^{\vee}$, the Prym variety $\PrymVar$ agrees with $(\Omega(X)^{-})^{\vee}/\H_1^{-}(X,\Z)$. Observe that, when $X$ has genus 3, the Prym variety $\PrymVar$ is no longer principally polarised but carries a $(1,2)$-polarisation. See for instance~\cite[Chap. 12]{birkenhakelange} or~\cite{moellerprym} for details.

Starting with a flat surface $(X,\omega)$ where $X$ admits an involution $\rho$ satisfying $\rho^*\omega=-\omega$ and identifying $\Jac(X)$ with $\Omega(X)^\vee/\H_1(X,\Z)$, the differential $\omega$ is mapped into the Prym part and hence, whenever $\PrymVar$ has real multiplication by $\O_D$, we obtain an induced action of $\O_D$ on $\omega$. We denote by $\E_D(2g-2)\subset\Omega\M_{g}$ the space of $(X,\omega)$ such that
\begin{enumerate}
\item $X$ admits an involution $\rho$ such that $\PrymVar$ is $2$-dimensional,
\item the form $\omega$ has a single zero and satisfies $\rho^*\omega=-\omega$, and
\item $\PrymVar$ admits proper real multiplication by $\O_D$ with $\omega$ as an eigenform,
\end{enumerate}
and by $\P\E_D(2g-2)$ the corresponding quotient by the $\SO(2)$ action.
McMullen showed~\cite{mcmtmchms,mcmprym} that by defining $\W[2g-2]$ as the projection of the locus $\E_D(2g-2)$ to $\M_g$, we obtain (possibly a union of) Teichmüller curves for every discriminant $D$ in $\M_2$, $\M_3$ and $\M_4$. In the genus $2$ case, the Prym involution is given by the hyperelliptic involution and the curve $\W[2]$ is called the \emph{Weierstraß curve}, while the curves $\W[4]$ and $\W[6]$ in $\M_3$ and $\M_4$, respectively, are known as \emph{Prym-Teichmüller curves}. As we are primarily interested in the genus $3$ case, we shall frequently refer to $\W[4]$ simply by $\W$.

We are now in a position to give a precise characterisation of orbifold points on Teichmüller curves in terms of flat geometry.

\begin{prop}\label{prop:orbifoldpoints} Let $\HH/\Gamma$ be a Teichmüller curve generated by some $(X,\omega)=(X_\i,\omega_\i)$.
Then the following are equivalent.
\begin{itemize}
\item The point $t\in \HH$ projects to an orbifold point in $\HH/\Gamma$.
\item There exists an elliptic matrix $C\in\SL(X,\omega)$, $C\neq\pm 1$ such that~$A_{t}CA_{t}^{-1}\in\SO(2)$.
\item The corresponding flat surface $(X_t,\omega_t)$ admits a (holomorphic) automorphism $\sigma$ satisfying $[\sigma^*\omega_t]=[\omega_t]$ and~$\sigma^*\omega_t\neq\pm\omega_t$.
\end{itemize}
\end{prop}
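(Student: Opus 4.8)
The plan is to establish the two equivalences $(1)\Leftrightarrow(2)$ and $(2)\Leftrightarrow(3)$ separately: the first is group theory on the uniformising group $\Gamma$, while the second is the dictionary translating affine automorphisms of $(X,\omega)$ into holomorphic automorphisms of the fibre $(X_t,\omega_t)$.

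For $(1)\Leftrightarrow(2)$, I would start from the definition: $t$ projects to an orbifold point exactly when $\PStab_\Gamma(t)$ is nontrivial, i.e. when some $\gamma\in\Gamma$ with nontrivial image in $\PSL_2(\R)$ fixes $t$. Since the elements of $\SL_2(\R)$ fixing a point of $\HH$ are precisely the elliptic ones together with $\pm1$, and $\pm1$ act trivially, this amounts to the existence of an elliptic $\gamma\in\Gamma$, $\gamma\neq\pm1$, with fixed point $t$. I would then feed in the relation $\Gamma=w\,\SL(X,\omega)\,w$ with $w=\left(\begin{smallmatrix}-1&0\\0&1\end{smallmatrix}\right)$: conjugation by $w$ preserves traces, so writing $\gamma=wCw$ sets up a bijection between such $\gamma$ and elliptic $C\in\SL(X,\omega)$ with $C\neq\pm1$. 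Finally, because $\SO(2)=\Stab_{\SL_2(\R)}(\i)$ and $w$ normalises $\SO(2)$ (explicitly $wR_\theta w=R_{-\theta}$), the statement ``$\gamma$ fixes $t$'' becomes, in the coordinate $A_t$, the condition $A_tCA_t^{-1}\in\SO(2)$, which is exactly $(2)$.

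For $(2)\Rightarrow(3)$, recall that every $C\in\SL(X,\omega)$ is the derivative of an affine automorphism $\phi$ of $(X,\omega)$. Let $h_t\colon X\to X_t$ be the tautological affine homeomorphism underlying the action of $A_t$, so that $(X_t,\omega_t)=A_t\cdot(X,\omega)$. Then $\sigma\coloneqq h_t\,\phi\,h_t^{-1}$ is an affine automorphism of $(X_t,\omega_t)$ with derivative $A_tCA_t^{-1}$, which by hypothesis is a rotation $R_\theta\in\SO(2)$. An affine map with rotational linear part is conformal, so $\sigma$ is holomorphic, and in a flat coordinate $z$ with $\omega_t=\d z$ one has $z\circ\sigma=e^{\i\theta}z+\text{const}$, hence $\sigma^*\omega_t=e^{\i\theta}\omega_t$. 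Thus $[\sigma^*\omega_t]=[\omega_t]$, and $C\neq\pm1$ forces $e^{\i\theta}\neq\pm1$, i.e. $\sigma^*\omega_t\neq\pm\omega_t$, giving $(3)$. For the converse $(3)\Rightarrow(2)$, a holomorphic $\sigma$ with $[\sigma^*\omega_t]=[\omega_t]$ satisfies $\sigma^*\omega_t=\zeta\omega_t$; since $X_t$ has genus $\geq2$ its automorphism group is finite, so $\zeta$ is a root of unity, in particular $\abs\zeta=1$, and $\sigma$ is affine with rotational derivative $R_\theta$, $\zeta=e^{\i\theta}$. Then $C\coloneqq A_t^{-1}R_\theta A_t$ is the derivative of $h_t^{-1}\sigma h_t\in\Aff(X,\omega)$, so $C\in\SL(X,\omega)$ is elliptic with $A_tCA_t^{-1}=R_\theta\in\SO(2)$, and $\sigma^*\omega_t\neq\pm\omega_t$ translates into $C\neq\pm1$.

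The hard part is the careful bookkeeping in $(2)\Leftrightarrow(3)$: one must check that an affine automorphism with rotational derivative really is holomorphic and compute precisely the unit scalar by which it multiplies $\omega_t$, and conversely that a holomorphic automorphism fixing the projective class $[\omega_t]$ is automatically affine of finite order. The appearance of the reflection $w$ in $(1)\Leftrightarrow(2)$ also needs attention, but it is ultimately harmless once one observes that $w$ normalises $\SO(2)$ and preserves traces.
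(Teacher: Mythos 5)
Your proof is correct and follows essentially the same route as the paper: orbifold points are identified with elliptic elements stabilising the coset of $A_t$, and the condition $A_tCA_t^{-1}\in\SO(2)$ is translated into a conformal affine automorphism multiplying $\omega_t$ by a root of unity. You are somewhat more explicit than the paper about the conjugation by $w=\left(\begin{smallmatrix}-1&0\\0&1\end{smallmatrix}\right)$ relating $\Gamma$ to $\SL(X,\omega)$ and about the converse direction from a holomorphic automorphism back to an elliptic matrix, both of which the paper leaves largely implicit.
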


\begin{proof}By the above correspondence, $t\in\HH$ corresponds to some $(X_t,[\omega_t])\in\P\Omega\M_g$ and equivalently to some $A_t\in\SO(2)\backslash\SL_2(\R)$ with $[A_t\cdot(X,\omega)]=(X_t,[\omega_t])$.

Now, $C\in\SL(X,\omega)$ is in the stabiliser of $A_t$ if and only if there exists $B\in\SO(2)$ such that
\[A_{t}C=BA_{t},\text{ i.e. }A_{t}CA_{t}^{-1}\in\SO(2).\]
But then, by definition, $C\in\SL(X,\omega)$ is elliptic. Moreover, $C'\coloneqq A_{t}CA_{t}^{-1}$ lies in $\SL(A_{t}\cdot(X,\omega))=\SL(X_t,\omega_t)$, and as $C'\in\SO(2)$, the associated affine map is in fact a holomorphic automorphism $\sigma$ of $X_t$. In particular, $\sigma^*\omega_t=\zeta\omega_t\in[\omega_t]$, where $\zeta$ is the corresponding root of unity.

Finally, observe that $C$ acts trivially on $\SO(2)\backslash\SL_2(\R)$ if and only if for every $A\in\SL_2(\R)$ there exists $B\in\SO(2)$ so that
\[AC=BA,\text{ i.e. } ACA^{-1}\in\SO(2)\;\forall A\in\SL_2(\R)\]
and this is the case if and only if~$C=\pm 1$.
\end{proof}

\begin{cor}\label{orbimorphisms}
There is a one-to-one correspondence between
\begin{itemize}
\item elements in~$\Stab_\Gamma(t)$,
\item elements in~$\SL\left(A_{t}\cdot(X,\omega)\right)\cap\SO(2)$, and
\item holomorphic automorphisms $\sigma$ of $X_t$ satisfying~$\sigma^*\omega_t\in[\omega_t]$.
\end{itemize}
\end{cor}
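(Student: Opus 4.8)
The plan is to assemble the three bijections directly from the proof of \autoref{prop:orbifoldpoints}: that proof already produces, for a fixed $t$, maps between the three sets, so the only thing left is to check that each is genuinely one-to-one. Write $J=\left(\begin{smallmatrix}-1&0\\0&1\end{smallmatrix}\right)$, so that the identity $\Gamma=J\cdot\SL(X,\omega)\cdot J$ from the uniformisation exhibits $C\mapsto JCJ$ as a group isomorphism $\SL(X,\omega)\xrightarrow{\sim}\Gamma$. Under the identifications of the commutative diagram a point $t\in\HH$ is represented by a class $A_t\in\SO(2)\backslash\SL_2(\R)$, and the proof of the proposition shows that $JCJ\in\Gamma$ fixes $t$ precisely when $C$ stabilises the coset of $A_t$, i.e.\ when $A_tCA_t^{-1}\in\SO(2)$.

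First I would nail down the left half of the correspondence,
\[\Stab_\Gamma(t)\ \longleftrightarrow\ \{C\in\SL(X,\omega):A_tCA_t^{-1}\in\SO(2)\}\ \longleftrightarrow\ \SL(A_t\cdot(X,\omega))\cap\SO(2).\]
The first arrow is the restriction of the isomorphism $C\mapsto JCJ$ to the fixed-point condition recalled above, hence a bijection. For the second arrow I use that the $\SL_2(\R)$-action conjugates Veech groups, $\SL(A_t\cdot(X,\omega))=A_t\,\SL(X,\omega)\,A_t^{-1}$; thus $C\mapsto A_tCA_t^{-1}$ is an isomorphism $\SL(X,\omega)\xrightarrow{\sim}\SL(A_t\cdot(X,\omega))$ that by construction restricts to a bijection of the subset cut out by $A_tCA_t^{-1}\in\SO(2)$ onto $\SL(A_t\cdot(X,\omega))\cap\SO(2)$. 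Both steps are purely formal.

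The substance lies in the last bijection, between $\SL(X_t,\omega_t)\cap\SO(2)$ and the holomorphic automorphisms $\sigma$ of $X_t$ with $\sigma^*\omega_t\in[\omega_t]$. One direction is the proposition: an element of $\SL(X_t,\omega_t)\cap\SO(2)$ is the derivative of an affine self-map whose rotational derivative forces it to be holomorphic, with $\sigma^*\omega_t=\zeta\omega_t$. Conversely, a holomorphic $\sigma$ with $\sigma^*\omega_t=\zeta\omega_t$ preserves the area form $\tfrac{\i}{2}\,\omega_t\wedge\overline{\omega_t}$, so $\abs{\zeta}=1$ and $\sigma$ is affine with derivative the rotation of angle $\arg\zeta$, landing in $\SL(X_t,\omega_t)\cap\SO(2)$. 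This sets up mutually inverse maps \emph{provided} the assignment $\sigma\mapsto D\sigma$ is injective.

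That injectivity is the step I expect to be the only real obstacle. If two automorphisms share the same derivative, their quotient $\psi$ satisfies $\psi^*\omega_t=\omega_t$, i.e.\ $\psi$ is a translation automorphism, and I must exclude nontrivial such $\psi$. Here the single zero of $\omega_t$ (of order $2g-2=4$) is essential: any $\psi\neq\mathrm{id}$ with $\psi^*\omega_t=\omega_t$ has all its fixed points at zeros of $\omega_t$, hence fixes the unique zero and lies in the cyclic group of rotations about that cone point, of order dividing $2g-1=5$; thus a nontrivial $\psi$ has order exactly $5$, and invariance of $\omega_t$ would make $(X_t,\omega_t)$ a degree-$5$ translation cover of a genus-$1$ surface, contradicting the primitivity of the Prym eigenform generating $\W[4]$. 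With nontrivial translation automorphisms ruled out, $\sigma\mapsto D\sigma$ is injective, the three correspondences compose to the desired bijection, and I would remark that each is in fact a group isomorphism, although only the bijection is needed for the order count $\abs{\PStab_\Gamma(s)}$.
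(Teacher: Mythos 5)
Your proposal is correct, and for the two formal bijections it follows exactly the route the paper leaves implicit: the corollary is stated without proof because the chain $\Stab_\Gamma(t)\leftrightarrow\{C\in\SL(X,\omega):A_tCA_t^{-1}\in\SO(2)\}\leftrightarrow\SL(A_t\cdot(X,\omega))\cap\SO(2)$ is already contained in the proof of \autoref{prop:orbifoldpoints}, via conjugation by $J$ and by $A_t$ just as you set it up. Where you genuinely go beyond the paper is the last step: the paper never addresses the injectivity of $\sigma\mapsto D\sigma$, i.e.\ the possible presence of nontrivial translation automorphisms, even though a nontrivial kernel would break the one-to-one correspondence as literally stated (and would also spoil the counting in \autoref{orbifoldscyclic}, which rests on this corollary). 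Your resolution is sound: a nontrivial $\psi$ with $\psi^*\omega_t=\omega_t$ must fix the unique zero, is locally a rotation of the cone of angle $10\pi$ by a multiple of $2\pi$, hence has order $5$; Riemann--Hurwitz then exhibits $(X_t,\omega_t)$ as a degree-$5$ translation cover of a torus, which is incompatible with the trace field $\Q(\sqrt{D})$ for non-square $D$. The only caveat is that this argument is specific to the stratum $\Omega\M_3(4)$ and to non-square $D$, so it proves the corollary in the setting where the paper actually uses it rather than for an arbitrary Teichmüller curve (where the statement, read literally, can fail for surfaces with nontrivial translation groups). That restriction is harmless here, and your proof is, if anything, more complete than the paper's.
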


In the case of Weierstraß and Prym-Teichmüller curves, we can say even more.

\begin{cor}\label{orbifoldcriterion}
Let $\W[2g-2]$ be as above, let $(X_t,[\omega_t])\in \P\E_D(2g-2)$ correspond to an orbifold point and let $\sigma$ be a non-trivial automorphism of $(X_t,[\omega_t])$. Let $\pi:X_t\to X_t/\sigma$ denote the projection.
Then $\pi$ has a totally ramified point.
%Then there exists some $\eta\in\Omega(X_t/\sigma)$ with $\pi^*\eta\in[\omega_t]$.
%In particular, $\eta$ has only a single zero that pulls back to the single zero of $\omega_t$, i.e. .\q{is this the correct term?}
\end{cor}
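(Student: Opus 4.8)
The plan is to reduce the statement to producing a single fixed point of $\sigma$, and then to identify that fixed point with the unique zero of the eigenform. First observe that, since $(X_t,[\omega_t])$ is an orbifold point and $\sigma$ is a non-trivial automorphism of $(X_t,[\omega_t])$, \autoref{orbimorphisms} gives $\sigma^*\omega_t=\zeta\omega_t$ for some root of unity $\zeta$ (and by \autoref{prop:orbifoldpoints} in fact $\zeta\neq\pm1$, though the precise value will play no role here). Writing $N$ for the order of $\sigma$, the quotient $\pi\colon X_t\to X_t/\sigma$ is a cyclic cover of degree $N$, and a point $P\in X_t$ is totally ramified precisely when its stabiliser in $\langle\sigma\rangle$ is the whole group; since $\sigma$ generates $\langle\sigma\rangle$, this happens exactly when $\sigma(P)=P$. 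Thus it suffices to exhibit a single fixed point of $\sigma$.

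To find one, I would use condition (2) in the definition of $\E_D(2g-2)$: the representative $\omega_t$ has a single zero $Z$, necessarily of order $2g-2$, and this property is preserved along the $\SL_2(\R)$-orbit since the action respects the zeros of the differential. Now I compare divisors. On one hand $\div(\zeta\omega_t)=\div(\omega_t)=(2g-2)Z$; on the other hand, since $\sigma$ is a biholomorphism, $\div(\sigma^*\omega_t)=\sigma^{-1}\big((2g-2)Z\big)=(2g-2)\,\sigma^{-1}(Z)$. The eigenform relation $\sigma^*\omega_t=\zeta\omega_t$ forces these to coincide, whence $\sigma^{-1}(Z)=Z$, that is, $\sigma(Z)=Z$. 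By the previous paragraph, $Z$ is then a totally ramified point of $\pi$.

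The argument is short and the only real content lies in the divisor bookkeeping of the second paragraph, so I do not anticipate a serious obstacle; the one point to get right is that the single-zero hypothesis is exactly what rigidifies the situation, forcing the unique zero to be $\sigma$-invariant rather than merely permuting a larger zero set. It is worth emphasising that this totally ramified point is the structural input needed downstream: it guarantees that the signature of the $\langle\sigma\rangle$-action on $X_t$ contains a branch point of maximal multiplicity $N$, which is what pins down the branching data recorded in \autoref{tab:orders}.
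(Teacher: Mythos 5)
Your argument is correct and is essentially the paper's own proof: the paper likewise notes that $[\sigma^*\omega_t]=[\omega_t]$ together with the single-zero condition forces the unique zero to be fixed by $\sigma$, hence totally ramified. You have merely spelled out the divisor bookkeeping and the equivalence between total ramification and being fixed by the generator, both of which the paper leaves implicit.
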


\begin{proof}
As $[\sigma^*\omega]=[\omega]$ and $\omega$ has a single zero, this must be a fixed point of $\sigma$, hence a totally ramified point.
\end{proof}

Note that the Prym-Teichmüller curves $\W[4]$ and $\W[6]$ lie entirely inside the branch locus of $\M_{3}$ and $\M_{4}$ respectively, as all their points admit involutions. In particular, the Prym involution $\rho_{t}$ on each $(X_{t},\omega_{t})$ acts as $-1$, i.e. $\rho_{t}^{*}\omega_{t}=-\omega_{t}$, and therefore it does not give rise to orbifold points.

\begin{cor}\label{orbiindex}
The Prym involution is the only non-trivial generic automorphism of $\W[2g-2]$, i.e. the index $[\Stab_\Gamma(s):\PStab_\Gamma(s)]$ is always~$2$.
\end{cor}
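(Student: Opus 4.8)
Since $\PStab_\Gamma(s)$ is by definition the image of $\Stab_\Gamma(s)$ under the projection $\SL_2(\R)\to\PSL_2(\R)$, whose kernel is $\{\pm 1\}$, the quantity $[\Stab_\Gamma(s):\PStab_\Gamma(s)]$ is exactly the order of $\Stab_\Gamma(s)\cap\{\pm 1\}$. The plan is therefore to show that $-1\in\Stab_\Gamma(s)$ for \emph{every} $s\in\HH$, so that this kernel equals $\{\pm 1\}$ and the index is $2$, and then to identify the non-trivial kernel element $-1$ with the Prym involution.

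For the first part, note that $-1$ acts trivially on $\HH$, so $-1\in\Stab_\Gamma(s)$ for all $s$ as soon as $-1\in\Gamma$. As $\Gamma$ is the conjugate of $\SL(X,\omega)$ by the reflection $\left(\begin{smallmatrix}-1&0\\0&1\end{smallmatrix}\right)$, which commutes with the central element $-1$, it suffices to check $-1\in\SL(X,\omega)$. By its defining condition (ii), the surface $(X,\omega)$ carries the Prym involution $\rho$ with $\rho^*\omega=-\omega$; being holomorphic it is conformal, so its derivative lies in $\SO(2)$, and since it negates the flat coordinate $\int\omega$ this derivative is $-\mathrm{Id}=-1$. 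Hence $-1\in\SL(X,\omega)\cap\SO(2)\subseteq\Gamma$, and the kernel is $\{\pm 1\}$ for every $s$, giving index $2$.

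It remains to match the two kernel elements with automorphisms. By \autoref{orbimorphisms} the elements of $\Stab_\Gamma(s)$ correspond bijectively to automorphisms $\sigma$ of $X_s$ with $\sigma^*\omega_s\in[\omega_s]$, an element $C$ being sent to the conformal automorphism of derivative $A_sCA_s^{-1}\in\SO(2)$. For the central elements $C=\pm 1$ this derivative is again $\pm 1$: the value $1$ gives the identity, while $-1$, a rotation by $\pi$, gives the automorphism with $\sigma^*\omega_s=-\omega_s$, namely the Prym involution $\rho_s$ of the fibre. This $\rho_s$ is the unique automorphism negating $\omega_s$, since two such would differ by an automorphism fixing $\omega_s$, which must be trivial because $\omega_s$ has a single zero. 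Therefore $\{1,-1\}$ corresponds to $\{\mathrm{id},\rho_s\}$, confirming that $\rho_s$ is the only non-trivial automorphism shared by all fibres and that it never contributes to the orbifold order. I anticipate no serious obstacle here, the argument being essentially bookkeeping on top of \autoref{prop:orbifoldpoints} and \autoref{orbimorphisms}; the only point demanding care is excluding a second order-two element from the kernel, which is precisely the single-zero argument just given.
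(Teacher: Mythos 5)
Your argument is correct and is essentially the paper's (implicit) proof: the Prym involution acts as $-1$ on each $\omega_t$, so $-\mathrm{Id}$ lies in $\SL(X,\omega)$, hence in $\Gamma$ and in every $\Stab_\Gamma(s)$, while acting trivially on $\HH$, which forces $[\Stab_\Gamma(s):\PStab_\Gamma(s)]=2$. One minor caveat: your closing uniqueness step (``an automorphism fixing $\omega_s$ must be trivial because $\omega_s$ has a single zero'') is not fully justified as written, since such an automorphism could a priori have local rotation number a nontrivial fifth root of unity at the $4$-fold zero; but this step is not needed, because the index equals $\lvert\Stab_\Gamma(s)\cap\{\pm 1\}\rvert$ and the identification of $-1$ with the Prym involution already follows from the bijection of \autoref{orbimorphisms}.
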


Moreover,~\autoref{prop:orbifoldpoints} gives a strong restriction on the type of automorphisms inducing orbifold points.

\begin{lemma}\label{orbifoldscyclic}
The point in $\W[2g-2]$ corresponding to a flat surface $(X,[\omega])$ is an orbifold point of order $n$ if and only if $(X,[\omega])$ admits an automorphism $\sigma$ of order $2n$. Moreover, $\sigma^n$ is the Prym involution.
\end{lemma}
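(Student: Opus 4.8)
The plan is to assemble the group-theoretic correspondence of \autoref{orbimorphisms} with the index computation of \autoref{orbiindex}, the one genuinely new input being that the relevant automorphism group is cyclic. Writing $(X,[\omega])=(X_t,[\omega_t])$, \autoref{orbimorphisms} identifies the stabiliser $\Stab_\Gamma(t)$, via conjugation by $A_t$, with the group $G$ of holomorphic automorphisms $\sigma$ of $X_t$ satisfying $\sigma^*\omega_t\in[\omega_t]$, and carries $G$ isomorphically onto the finite subgroup $\SL(X_t,\omega_t)\cap\SO(2)$ of the rotation group. The key structural observation I would make is that any finite subgroup of the abelian circle group $\SO(2)$ is cyclic; hence $G$ is cyclic, generated by some $\sigma$ whose order equals $\abs{G}$.

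Next I would pin down $\abs{G}$. By definition the orbifold order of $t$ is $n=\abs{\PStab_\Gamma(t)}$, and by \autoref{orbiindex} the index $[\Stab_\Gamma(t):\PStab_\Gamma(t)]$ equals $2$, so $\abs{G}=\abs{\Stab_\Gamma(t)}=2n$. This yields the stated equivalence: the point is an orbifold point of order $n$ precisely when $G$ is cyclic of order $2n$, i.e. precisely when $(X,[\omega])$ admits an automorphism $\sigma$ of order $2n$ generating $G$. I expect the backward direction to be the one point needing a careful word: an automorphism of order $2n$ respecting $[\omega]$ lies in the \emph{cyclic} group $G$, and one must argue it generates all of $G$ rather than sitting inside a strictly larger cyclic group — cyclicity of $G$ is exactly what rules the latter out and forces $\abs{G}=2n$, hence orbifold order $n$. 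This is the main (if modest) obstacle; everything else is a direct reading of the earlier results.

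Finally, for the assertion that $\sigma^n$ is the Prym involution, I would use that $\rho$ satisfies $\rho^*\omega_t=-\omega_t$, so $\rho^*[\omega_t]=[\omega_t]$ and thus $\rho\in G$, of order $2$. The character $G\to\SO(2)$ sending each $\sigma'$ to the eigenvalue of $\sigma'^*$ on $\omega_t$ is injective — this is built into the one-to-one correspondence of \autoref{orbimorphisms}, the single zero of $\omega_t$ forbidding a non-trivial automorphism from acting trivially on the form — and it sends the generator $\sigma$ to a primitive $2n$-th root of unity and $\rho$ to $-1$. Since a cyclic group of order $2n$ has a unique element of order $2$, namely $\sigma^n$, injectivity of the character gives $\rho=\sigma^n$, completing the argument.
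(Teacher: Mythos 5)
Your argument follows the paper's proof in all but one detail of packaging: the paper obtains cyclicity of the stabiliser by noting that every automorphism preserving $[\omega]$ fixes the unique zero $P$ of $\omega$ and acts there as a local rotation, while you obtain it from the identification with a finite subgroup of $\SO(2)$ in \autoref{orbimorphisms}; these are two descriptions of the same rotation (it is the derivative at $P$), and the remaining ingredients --- the index-two statement of \autoref{orbiindex}, the fact that $\rho$ preserves $[\omega]$ and is the unique involution in a cyclic group of even order --- coincide with the paper's.

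The one step that fails is your treatment of the backward implication. You assert that, because $G$ is cyclic, an automorphism $\sigma$ of order $2n$ preserving $[\omega]$ must generate $G$, so that cyclicity alone ``rules out'' a strictly larger stabiliser. That is false for cyclic groups: a cyclic group of order $2nk$ with $k>1$ contains elements of order $2n$ generating only a proper subgroup. The problem is not hypothetical in this paper: on the Fermat fibre $\X_{-1}$ the automorphism $\alphaX$ has order $4$ and satisfies $\alphaX^*\omegaX_1=\i\,\omegaX_1\in[\omegaX_1]$, yet the corresponding point of $\W[D=8]$ has orbifold order $4$, not $2$, because the full stabiliser is cyclic of order $8$ generated by $\betaX$. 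So ``admits an automorphism of order $2n$'' must be read as ``$2n$ is the order of the (cyclic) group of automorphisms preserving $[\omega]$'', i.e.\ $\sigma$ generates the whole stabiliser --- which is how the lemma is used in \autoref{thm:families} and \autoref{thm:orbicrit}, where order-two points are \emph{generic} fibres of $\X$ and the special fibres are promoted to order $4$ or $6$. The paper's own proof is just as terse at this point, so you have not overlooked an argument the authors supplied; but the sentence in which you claim that cyclicity closes this gap is genuinely incorrect and should be replaced by the maximality convention.
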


\begin{proof}
Let $P\in X$ be the (unique) zero of~$\omega$.
By the above, the automorphisms of $(X,[\omega])$ lie in the $P$-stabiliser of $\Aut(X)$. But these are (locally) rotations around $P$, hence the stabiliser is cyclic and of even order, as it contains the Prym involution $\rho$. Conversely, any automorphism $\sigma$ fixing $P$ satisfies $[\sigma^*\omega]=[\omega]$. The remaining claims follow from~\autoref{orbiindex}.
\end{proof}

%%%%%%%%%%%%%%%%%%%%%%%%%%%%%%%%%%%%%

To determine the number of branch points in the genus $3$ case, we start with the following observation (cf.~\cite[Lemma 2.1]{moellerprym}).

\begin{lemma}\label{nonhyp}
The curve $\W$ is disjoint from the hyperelliptic locus in~$\M_3$.
\end{lemma}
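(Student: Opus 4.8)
The plan is to assume for contradiction that $X$ is hyperelliptic and to combine its hyperelliptic involution with the Prym involution $\rho$, producing a fixed-point-free involution under which $\omega$ is invariant; this will clash with the hypothesis that $\omega$ has a single zero.

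First I would record the relevant data of the genus $3$ Prym structure. Since $\PrymVar$ is $2$-dimensional we have $\dim\Omega(X)^- = 2$, hence $\dim\Omega(X)^+ = 1$ and the quotient $E\coloneqq X/\rho$ is an elliptic curve. Riemann--Hurwitz for the double cover $X\to E$ gives $4 = 2(2\cdot 1 - 2) + b$, so $\rho$ has exactly $b=4$ fixed points; moreover, since $\rho^*\omega = -\omega$ preserves the zero divisor $4P$ of $\omega$, the point $P$ is one of them.

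Now suppose $X$ is hyperelliptic with hyperelliptic involution $\iota$. The key step is to consider $\tau\coloneqq\iota\rho$. Recall that $\iota$ is central in $\Aut(X)$ and acts as $-1$ on $\Omega(X)$; in particular $\iota$ and $\rho$ commute, and $\iota\neq\rho$ because $\rho^*$ has a nontrivial $+1$-eigenspace whereas $\iota^*$ does not. Hence $\tau$ is a genuine involution with $\tau^* = \iota^*\rho^* = -\rho^*$, so its $+1$-eigenspace on $\Omega(X)$ is $\Omega(X)^-$, of dimension $2$, and the quotient $Y\coloneqq X/\tau$ has genus $2$. Feeding this into Riemann--Hurwitz for $X\to Y$ gives $4 = 2(2\cdot 2 - 2) + b$, forcing $b=0$: the involution $\tau$ acts \emph{without fixed points}, so $\psi\colon X\to Y$ is an étale double cover. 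The contradiction then follows at once, since $\iota^*\omega=-\omega$ and $\rho^*\omega=-\omega$ yield $\tau^*\omega=\omega$, so $\omega$ descends to a holomorphic form $\eta$ on $Y$ with $\omega=\psi^*\eta$. Because $\psi$ is unramified, $\div(\omega)=\psi^*\div(\eta)$, and every fibre of $\psi$ consists of two distinct points; as $\div(\eta)$ has degree $2g(Y)-2=2>0$, its pullback is supported on at least two points, contradicting the assumption that $\omega$ has a single zero. Hence $X$ is not hyperelliptic and $\W$ is disjoint from the hyperelliptic locus.

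I expect the main subtlety to be the identification of $\tau=\iota\rho$ as a fixed-point-free involution: once one observes that the three quotients $X/\iota$, $X/\rho$, $X/\tau$ have genera $0$, $1$, $2$ and that the étale case is forced by Riemann--Hurwitz, the remainder is bookkeeping. A purely geometric alternative—using that on a hyperelliptic genus $3$ curve every canonical divisor is pulled back from $\lvert\mathcal{O}_{\P^1}(2)\rvert$, so a single $4$-fold zero must sit at a Weierstraß point—does \emph{not} by itself exclude hyperellipticity, which is why the Prym involution is essential to the argument.
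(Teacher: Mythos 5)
Your proof is correct and follows essentially the same route as the paper: both form the involution $\tau=\iota\rho$, observe that the Prym forms are $\tau$-invariant and hence pulled back from the genus-$2$ quotient $X/\tau$, and conclude via Riemann--Hurwitz that the étale cover cannot produce a form with a single $4$-fold zero. You simply spell out in more detail the final step (that the pullback of a degree-$2$ divisor along an unramified double cover is supported on at least two points), which the paper leaves implicit.
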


\begin{proof}
Let $(X,[\omega])$ correspond to a point on $\W$, denote by $\rho$ the Prym involution on $X$ and assume that $X$ is hyperelliptic with involution $\sigma$. As $X$ is of genus 3, $\sigma\neq\rho$. But $\sigma$ commutes with $\rho$ and therefore $\tau\coloneqq\sigma\circ\rho$ is another involution.

Recall that $\sigma$ acts by $-1$ on all of $\Omega(X)$ and its  decomposition into $\rho$-eigenspaces $\Omega(X)^{\pm}$. The $-1$ eigenspace of $\tau$ is therefore $\Omega(X)^+$ and the $+1$ eigenspace is $\Omega(X)^-$. In particular, any Prym form on $X$ is $\tau$ invariant, i.e. a pullback from~$X/\tau$.

However, by checking the dimensions of the eigenspaces, we see that $X/\tau$ is of genus $2$, hence $X\rightarrow X/\tau$ is unramified by Riemann-Hurwitz and we cannot obtain a form with a fourfold zero on $X$ by pullback, i.e. $(X,\omega)\not\in\E_D(4)$, a contradiction.
\end{proof}

We now have all we need to prove~\autoref{thm:orbifoldpts}.

\begin{proof}[Proof of~\autoref{thm:orbifoldpts}]
Starting with~\autoref{prop:orbifoldpoints} and~\autoref{orbifoldscyclic}, observe that $\sigma$ descends to an automorphism $\overline{\sigma}$ of the elliptic curve $X/\rho$. Note that $\overline{\sigma}$ acts non-trivially, since $\sigma\neq\rho$, and it has at least one fixed point, hence $X/\sigma\cong\P^1$ and it is well-known that $\overline{\sigma}$ can only be of order $2$, $3$, $4$ or~$6$.

For the number of ramification points, since $X$ has genus $3$, by Riemann-Hurwitz
    \[4=-4n+2n\sum_{d|2n}\left(1-\frac{1}{d}\right)e_{d}\,,\]
where $e_{d}$ is the number of points over which $\sigma$ ramifies with order $d$. A case by case analysis using~\autoref{nonhyp} shows that the only possibilities are those listed in \autoref{tab:orders}.
\end{proof}

\begin{rem}
Automorphism groups of genus 3 curves were classified by Komiya and Kuribayashi in~\cite{KoKu} (P. Henn studied them even earlier in his PhD dissertation~\cite{henn}). One can also find a complete classification of these automorphism groups together with their branching data in~\cite[Table 5]{Bro}, including all the information in our~\autoref{tab:orders}.
\end{rem}

%%%%%%%%%%%%%%%%%%%%%%%%%%%%%%%%%%%%%%%%%%%%%%%%%%%%%

\section{Cyclic covers}\label{sec:cycliccovers}

\autoref{thm:orbifoldpts} classified orbifold points of $\W$ in terms of automorphisms of the complex curve. The aim of this section is to express these conditions as intersections of $\W$ with certain families of cyclic covers of $\P^1$ in~$\M_3$.

Let $\X\rightarrow\P^*\coloneqq\P^1-\{0,1,\infty\}$ be the family of projective curves with affine model
\[\X_t\colon y^4=x(x-1)(x-t)\]
and $\Y\rightarrow\P^*$ the family of projective curves with affine model
\[\Y_t\colon y^6=x^2(x-1)^2(x-t)^3.\]

The family $\X$ has been intensely studied, notably in~\cite{guardia} and~\cite{HS}. In fact, it is even a rare example of a curve that is both a Shimura and a Teichmüller curve (cf. \cite{moellerst}, see~\autoref{rem:wollmilchsau} below). Because of the flat picture of its fibres (cf. \autoref{sec:flat}), we will refer to it as the \Xfamily.

The family $\Y$ is related to the Shimura curve of discriminant 6, which has been studied for instance in~\cite{voight} and~\cite{PetkovaShiga}. We will refer to it as the \Yfamily, again as a reference to the flat picture (cf. \autoref{sec:flat}).

\begin{prop}\label{thm:families}
If $(X,[\omega])$ corresponds to an orbifold point on $\W$ then $X$ is isomorphic to some fibre of $\X$ or~$\Y$.

Moreover, $(X,[\omega])$ is of order six if and only if $X$ is isomorphic to $\X_{\zeta_{6}}\cong\Y_{\nicefrac{1}{2}}$ the (unique) intersection point of $\X$ and $\Y$ in $\M_3$; it is of order four if and only if $X$ is isomorphic to $\X_{-1}$; it is of order two if it corresponds to a generic fibre of $\X$ and of order three if it corresponds to a generic fibre of~$\Y$.
\end{prop}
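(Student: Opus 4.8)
The plan is to classify the totally ramified cyclic covers of $\P^1$ that can arise from the automorphisms in \autoref{tab:orders} and to exhibit each one as a fibre of $\X$ or $\Y$. By \autoref{thm:orbifoldpts} and \autoref{orbifoldcriterion}, an orbifold point of order $n$ on $\W$ is a curve $X$ carrying an automorphism $\sigma$ of order $2n$ with $\sigma^*\omega=\zeta_{2n}\omega$, such that $\pi\colon X\to X/\sigma\cong\P^1$ has a totally ramified point $P$ (the zero of $\omega$). First I would read off, for each of the four cases $(\mathrm{ord}(\sigma),\text{branching data})$ in \autoref{tab:orders}, that $X\to\P^1$ is a cyclic cover of degree $2n$ branched over exactly four points with the prescribed local monodromies, and that by the theory of cyclic covers of $\P^1$ such a curve has an explicit superelliptic model $y^{2n}=\prod_i (x-x_i)^{a_i}$ determined by the branching data. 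Normalising three of the branch points to $0,1,\infty$ via an automorphism of $\P^1$ leaves a single modulus $t=x_4$, so each case produces a one-parameter family.

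Next I would match the branching data to the defining equations of $\X$ and $\Y$. For the order-four case (i), branching $(0;4,4,4,4)$, the cover is $y^4=x(x-1)(x-t)$ up to the choice of exponents forced by requiring each branch point to have ramification index $4$ and $\sigma^*\omega=\i\,\omega$; this is exactly $\X_t$. For the order-six case (ii), branching $(0;2,3,3,6)$, the corresponding superelliptic curve is $y^6=x^2(x-1)^2(x-t)^3$, which is $\Y_t$; here I would check that the exponents $(2,2,3)$ over the three finite branch points, together with the exponent at $\infty$, reproduce the multiplicities $2,3,3,6$ after accounting for $\gcd$'s with $6$. The remaining two cases (iii) $\mathrm{ord}(\sigma)=8$ and (iv) $\mathrm{ord}(\sigma)=12$ are the degenerate specialisations: the order-eight automorphism occurs precisely when two branch points of $\X_t$ collide under an extra symmetry, which happens at $t=-1$ (the Fermat quartic $\X_{-1}$, singled out in the introduction as the unique fibre with a cyclic automorphism group of order $8$); the order-twelve automorphism forces $X$ to be the Wiman curve, the unique genus $3$ curve with a cyclic group of order $12$, which is both $\X_{\zeta_6}$ and $\Y_{1/2}$.

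To pin down the special parameter values I would argue via the automorphism group. In case (iii) the group is cyclic of order $8$ containing the order-$4$ subgroup of case (i); since a generic $\X_t$ has automorphisms only of order dividing $4$ (beyond the hyperelliptic-type extra involutions), an order-$8$ element forces the extra identification that collapses $\{0,1,t,\infty\}$ to a configuration with an order-$4$ projective symmetry, giving $t=-1$. In case (iv), an order-$12$ element contains both an order-$4$ and an order-$6$ element, so $X$ must lie on $\X$ \emph{and} $\Y$ simultaneously; uniqueness of the order-$12$ cyclic automorphism in genus $3$ (the Wiman curve) identifies the intersection point and fixes $t=\zeta_6$ on the $\X$-side and $t=1/2$ on the $\Y$-side, which I would verify by computing the two $j$-type invariants or by exhibiting an explicit isomorphism $\X_{\zeta_6}\cong\Y_{1/2}$. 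Finally, for the generic statement I would note that a generic fibre of $\X$ has an order-$4$ but no larger automorphism, giving order two, and a generic fibre of $\Y$ has order $3$ correspondingly. The main obstacle I anticipate is the bookkeeping of exponents: verifying that the prescribed branching data in \autoref{tab:orders} translate into exactly the exponent vectors in the equations for $\X_t$ and $\Y_t$ (including the implicit branch point at $\infty$ and the correct primitive root $\zeta_{2n}$ acting on $\omega=\d{x}/y^{k}$), and confirming that no spurious one-parameter families with the same branching data arise, so that $\X$ and $\Y$ genuinely exhaust all orbifold points.
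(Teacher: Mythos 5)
Your overall strategy --- classify the covers by branching data, realise them as superelliptic curves, and identify the special fibres via extra automorphisms --- is the same as the paper's, but there is a genuine gap at the decisive step of case (i). The branching data $(0;4,4,4,4)$ does \emph{not} force the exponent vector $(1,1,1,1)$: up to relabelling the branch points and inverting the generator of the deck group there is a second one-parameter family with exactly the same branching data, namely $y^4=x^3(x-1)^3(x-t)$, in which every branch point is still totally ramified (all exponents are coprime to $4$). Your parenthetical ``up to the choice of exponents forced by requiring each branch point to have ramification index $4$'' is therefore not a proof, and you yourself flag ``confirming that no spurious one-parameter families with the same branching data arise'' as an unresolved obstacle --- but that is precisely the one non-routine point in the first assertion. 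The paper disposes of the second family by exhibiting its hyperelliptic involution explicitly and invoking \autoref{nonhyp} (no fibre of $\W$ is hyperelliptic); without some such argument your case (i) is incomplete. For case (ii) the analogous check does work out: the data $(0;2,3,3,6)$ pins down the exponents up to the inversion $a\mapsto -a \bmod 6$, so there $\Y$ really is the unique family.

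A second, smaller discrepancy: cases (iii) and (iv) are branched over only \emph{three} points, so they are rigid covers, not one-parameter families as your opening paragraph asserts. You recover from this by treating them as special fibres, but the paper's reduction is cleaner and avoids analysing them separately: an automorphism of order $8$ (resp.\ $12$) has a power of order $4$ (resp.\ powers of orders $4$ and $6$) with quotient data $(0;4,4,4,4)$ (resp.\ also $(0;2,3,3,6)$), so types (iii) and (iv) are automatically contained in families (i) and (ii). The identification of the parameters $t=-1$, $t=\zeta_6$, $t=\nicefrac{1}{2}$ and the ``only if'' direction for orders $4$ and $6$ are then delegated to \autoref{lem:families} and \autoref{orbifoldscyclic}, which your uniqueness arguments for the Fermat and Wiman curves in effect reproduce. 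If you repair the hyperellipticity gap in case (i), the rest of your outline goes through.
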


To state the converse, we need to pick a Prym eigenform on the appropriate fibres of $\X$ and~$\Y$.

\medskip

First, let us briefly review some well-known facts on the theory of cyclic coverings which will be applicable to both the \Xfamily{} $\X$ and the \Yfamily{} $\Y$. For more background and details, see for example~\cite{rohde}.

Consider the family $\ZZ\rightarrow\P^*$ of projective curves with affine model
\[\ZZ_t\colon y^d=x^{a_1}(x-1)^{a_2}(x-t)^{a_3},\]
and choose $a_4$ so that $\sum a_i\equiv 0\bmod d$, with $0<a_{i}<d$. Moreover, we will suppose $\gcd(a_1,a_2,a_3,a_4,d)=1$ so that the curve is connected. Note that any (connected) family of cyclic covers, ramified over four points, may be described in this way.

Let us define $g_{i}=\gcd(a_{i},d)$, for $i=1,\ldots,4$. For each fibre $\ZZ_t$, the map $\pi_t=\pi\colon(x,y)\mapsto x$ yields a cover $\ZZ_t\to\P^{1}$ of degree $d$ ramified over $0$, $1$, $t$ and $\infty$ with branching orders $d/g_{1}$, $d/g_{2}$, $d/g_{3}$ and $d/g_{4}$ respectively. Then, by Riemann-Hurwitz, the genus of $\ZZ_t$ is $d+1-(\sum_{i=1}^4 g_{i})/2$.

Note that the number of preimages of $0$, $1$, $t$ and $\infty$ is $g_{1}$, $g_{2}$, $g_{3}$ and $g_{4}$ respectively. Denote for instance $\pi^{-1}(0)=\{P_{j}\}$, with $j=0,\dotsc,g_{1}-1$. The following map
 \begin{equation}\label{cyclicparametrisation}
 z\mapsto \left(z^{\frac{d}{g_{1}}},\zeta_{d}^{j}z^{\frac{a_1}{g_{1}}}\sqrt[d]{(z^{\frac{d}{g_{1}}}-1)^{a_{2}}(z^{\frac{d}{g_{1}}}-t)^{a_{3}}}\right) \,,\qquad |z|<\varepsilon
 \end{equation}
gives a parametrisation of a neighbourhood of $P_{j}$. In a similar way, one can find local parametrisations around the preimages of the rest of the branching values.

\medskip

The map $\pi$ corresponds to the quotient $\ZZ_{t}/\langle\alphaZ\rangle$ by the action of the cyclic group of order $d$ generated by the automorphism
    \begin{align*}
    \alphaZ\coloneqq \alpha_{t}^\ZZ\colon&(x,y)\mapsto(x,\zeta_{d}y)\,,
    \end{align*}
where $\zeta_{d}=\exp(2\pi\i/d)$. When there is no ambiguity we will simply write $\alpha$ for~$\alphaZ$.

In particular, the cyclic groups acting on $\X_{t}$ and on $\Y_{t}$ are generated by the automorphisms
\begin{align*}
\alphaX\coloneqq \alphaX_{t}\colon&(x,y)\mapsto(x,\zeta_{4}y),
\text{ and }\\
\alphaY\coloneqq \alphaY_{t}\colon&(x,y)\mapsto(x,\zeta_{6}y),%\\
\end{align*}
respectively.

By~\autoref{orbifoldscyclic}, the Prym involutions are given by
\begin{align*}
\rhoX\coloneqq \rho^\X_{t}\coloneqq\alphaX^{2}\colon&(x,y)\mapsto(x,-y)\,,\text{ and }\\
\rhoY\coloneqq \rho^\Y_{t}\coloneqq\alphaY^{3}\colon&(x,y)\mapsto(x,-y)\,.
\end{align*}

We will denote by $\PX$ and $\PY$ the corresponding Prym varieties.

\medskip

Note that different fibres of the families $\X$ and $\Y$ can be isomorphic.

In fact, in the case of the \Xfamily{} $\X$ any isomorphism $\phi:\P^{1}\to\P^{1}$ preserving the set $\{0,1,\infty\}$ lifts to isomorphisms $\X_{t}\cong\X_{\phi(t)}$ for each $t$. As a consequence, our family is parametrised by $\P^{*}/\Sym_{3}$, where we take the symmetric group $\Sym_{3}$ to be generated by $z\mapsto 1-z$ and $z\mapsto 1/z$. The corresponding modular maps yield curves in $\M_3$ and $\A_3$.

As for the \Yfamily{} $\Y$, for each $t\in\P^{*}$ the curves $\Y_{t}$ and $\Y_{1-t}$ are isomorphic via the map $(x,y)\mapsto(1-x,\zeta_{12}y)$, which induces the automorphism $z\mapsto 1-z$ on $\P^{1}$. Since any isomorphism between fibres $\Y_{t}$ and $\Y_{t'}$ must descend to an isomorphism of $\P^{1}$ interchanging branching values of the same order, it is clear that no other two fibres are isomorphic, and therefore the family is actually parametrised by $\P^{*}/\sim$, where $z\sim 1-z$. In~\autoref{subsec:PrymC6} we will give a more explicit description of this family in terms of its Prym-Torelli image.

The discussion above proves the following.

\begin{lemma}\label{lem:families}Let $\X$ and $\Y$ be the families defined above.
\begin{enumerate}
\item The map $\P^{*}\to\M_{3}$, $t\mapsto \X_{t}$ is of degree $6$. It ramifies over $\X_{-1}$ that has 3 preimages $\{\X_{t}:t=-1,1/2,2\}$ and $\X_{\zeta_{6}}$ that has 2 preimages $\{\X_{t}:t=\zeta_{6}^{\pm1}\}$.

    The only fibres with a cyclic group of automorphisms or order larger than $4$ are $\X_{-1}$ that admits a cyclic group of order 8 and $\X_{\zeta_{6}}$ that admits a cyclic group of order 12.

\item The map $\P^{*}\to\M_{3}$, $t\mapsto \Y_{t}$ is of degree $2$. It ramifies only over $\Y_{\nicefrac{1}{2}}$ that has a single preimage.

    The only fibre with a cyclic group of automorphisms of order larger than $6$ is $\Y_{\nicefrac{1}{2}}$ that admits a cyclic group of order 12.
\end{enumerate}
\end{lemma}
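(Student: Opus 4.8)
The plan is to read off all four assertions from the identification, established in the preceding discussion, of the images of the two families with quotients of $\P^{*}$ by a finite group of Möbius transformations preserving $\{0,1,\infty\}$. For $\X$ this group is the anharmonic group $G=\langle z\mapsto 1-z,\ z\mapsto 1/z\rangle\cong\Sym_{3}$, so that $t\mapsto\X_{t}$ factors through $\P^{*}\to\P^{*}/G$; for $\Y$ it is the order-two group $\langle z\mapsto 1-z\rangle$. The generic degrees $6$ and $2$ are then immediate, and the ramification points of $\P^{*}\to\P^{*}/G$ are exactly the points with non-trivial stabiliser.

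First I would locate the special orbits by a direct computation of the $G$-action. The only orbits of length less than $6$ are the orbit $\{-1,1/2,2\}$ of $-1$, of length $3$ with stabiliser $\langle z\mapsto 1/z\rangle\cong\Z/2$, and the orbit $\{\zeta_{6},\zeta_{6}^{-1}\}$ of $\zeta_{6}$, of length $2$ with stabiliser the order-three group $\langle z\mapsto 1/(1-z)\rangle$; these are precisely the fibres of $\lambda\mapsto j$ over $j=1728$ and $j=0$. This yields the preimages of $\X_{-1}$ and $\X_{\zeta_{6}}$ and the ramification statement in~(1). For~(2), the involution $z\mapsto 1-z$ has the single fixed point $z=1/2$ in $\P^{*}$, giving the unique ramification point $\Y_{1/2}$.

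For the automorphism groups, every fibre carries the cyclic deck group $\langle\alphaX\rangle\cong\Z/4$ (resp.\ $\langle\alphaY\rangle\cong\Z/6$). Over the two special orbits I would produce larger cyclic groups by lifting the stabiliser generators explicitly. For $\Y_{1/2}$ the lift $\beta\colon(x,y)\mapsto(1-x,\zeta_{12}y)$ of $z\mapsto 1-z$ satisfies $\beta^{2}=\alphaY$, whence $\langle\beta\rangle\cong\Z/12$. For $\X_{-1}$ the lift $\gamma\colon(x,y)\mapsto(1/x,\zeta_{8}y/x)$ of $z\mapsto 1/z$ satisfies $\gamma^{2}=\alphaX$, whence $\langle\gamma\rangle\cong\Z/8$; and on $\X_{\zeta_{6}}$ a lift $\delta$ of the order-three stabiliser generator commutes with $\alphaX$ (conjugation induces a homomorphism $\Z/3\to\Aut(\Z/4)\cong\Z/2$, which is trivial), so $\delta\alphaX$ has order $12$ and $\langle\delta\alphaX\rangle\cong\Z/12$.

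The step I expect to demand the most care is the converse half of the automorphism statements: that no other fibre carries a cyclic group of order exceeding $4$ (resp.\ $6$), and that these enhanced groups are genuinely cyclic of exactly the stated order. The explicit relations $\beta^{2}=\alphaY$ and $\gamma^{2}=\alphaX$ settle cyclicity directly, while for maximality the cleanest route is to invoke the classification of automorphism groups of genus-three curves together with their signatures (cf.\ the Remark following \autoref{thm:orbifoldpts} and \cite{Bro}): the signatures $(0;4,8,8)$ and $(0;3,4,12)$ of \autoref{tab:orders} each cut out a single curve, which must therefore be the Fermat curve $\X_{-1}$ and the Wiman curve $\X_{\zeta_{6}}\cong\Y_{1/2}$, respectively, while the generic fibres, with signatures $(0;4,4,4,4)$ and $(0;2,3,3,6)$, carry only $\Z/4$ and $\Z/6$.
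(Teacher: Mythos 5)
Your argument is correct and follows essentially the same route as the paper, which derives the lemma from the preceding identification of isomorphic fibres via the $\Sym_3$- (resp.\ $\Z/2$-) action on $\P^{*}$, exhibits the order-$8$ and order-$12$ automorphisms explicitly as lifts satisfying $\beta^2=\alpha$ in the subsequent \enquote{Special points} discussion, and defers maximality to the classification of genus-$3$ automorphism groups cited in the remark after \autoref{thm:orbifoldpts}. Your write-up merely makes the orbit and stabiliser computations and the lifting verifications explicit.
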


\begin{proof}[Proof of~\autoref{thm:families}] If $(X,[\omega])$ corresponds to an orbifold point on $\W$, then $X$ must belong to one of the families in~\autoref{tab:orders}.

First of all, note that curves of type (iii) admit an automorphism of order 4 with branching data $(0;4,4,4,4)$, and therefore they also belong to family (i). Similarly, those of type (iv) admit automorphisms of order 4 and 6 with branching data $(0;4,4,4,4)$ and $(0;2,3,3,6)$ respectively, and therefore they belong both to families (i) and (ii). As a consequence we can suppose that $X$ belongs either to (i) or (ii).

Let us suppose that $X$ is of type (i). Looking at the branching data, one can see that $X$ is necessarily isomorphic to one of the following two curves for some~$t\in\P^{*}$
    \begin{align*}
    y^4 &= x(x-1)(x-t)\,, \\
    y^4 &= x^{3}(x-1)^{3}(x-t)\,.
    \end{align*}

However curves of the second kind are always hyperelliptic, with hyperelliptic involution given by
\[\tau\colon (x,y)\mapsto\left(\frac{tx-t}{x-t},t(t-1)\frac{y}{(y-t)^2}\right).\]
 As points of $\W$ cannot correspond to hyperelliptic curves by~\autoref{nonhyp}, the curve $X$ is necessarily isomorphic to some~$\X_{t}$.

If $X$ is of type (ii), the branching data tells us that $X$ must be isomorphic to some fibre~$\Y_{t}$.

The claim about the order of the orbifold points follows from~\autoref{orbifoldscyclic} and \autoref{lem:families}.
\end{proof}

\begin{rem}
Let us note here that the special fibre $\X_{-1}$ is isomorphic to the Fermat curve $x^{4}+y^{4}+z^{4}=0$ and that the unique intersection point of the \Xfamily{} and the \Yfamily{}, that is $\X_{\zeta_{6}}\cong\Y_{\nicefrac{1}{2}}$, is isomorphic to the exceptional Wiman curve of genus 3 with affine equation $y^{3}=x^{4}+1$.
\end{rem}

\subsection{Differential forms}

By the considerations in~\autoref{sec:orbifoldbg}, we are only interested in differential forms with a single zero in a fixed point of the Prym involution.

\begin{lemma}\label{lem:forms}
Let $t\in\P^*$.
\begin{enumerate}
\item The space of holomorphic $1$-forms on each fibre $\X_{t}$ of the \Xfamily{} is generated by the $\alphaX^{*}$-eigenforms
    \[\omegaX_{1} = \frac{\d{x}}{y^{3}}\,,\qquad \omegaX_{2} = \frac{x\d{x}}{y^{3}}\,,\qquad \omegaX_{3} = \frac{\d{x}}{y^{2}}\,.\]
In particular, we obtain $\Omega(\X_{t})^{-}=\langle\omegaX_{1},\omegaX_{2}\rangle$ and $\Omega(\X_{t})^{+}=\langle\omegaX_{3}\rangle$ as $\rhoX$-eigenspaces.
\item The space of holomorphic $1$-forms on each fibre $\Y_{t}$ of the \Yfamily{} is generated by the $\alphaY^{*}$-eigenforms
    \[\omegaY_{1} = \frac{\d{x}}{y}\,,\qquad \omegaY_{2} = \frac{y\d{x}}{x(x-1)(x-t)}\,,\qquad \omegaY_{3} = \frac{y^{4}\d{x}}{x^{2}(x-1)^{2}(x-t)^{2}}\,.\]
In particular, we obtain $\Omega(\Y_{t})^{-}=\langle\omegaY_{1},\omegaY_{2}\rangle$ and $\Omega(\Y_{t})^{+}=\langle\omegaY_{3}\rangle$ as $\rhoY$-eigenspaces.
\end{enumerate}
\end{lemma}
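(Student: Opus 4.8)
The plan is to compute holomorphic $1$-forms on the cyclic covers $\X_t\colon y^4=x(x-1)(x-t)$ and $\Y_t\colon y^6=x^2(x-1)^2(x-t)^3$ directly, using the standard description of the canonical ring of a superelliptic curve. First I would recall that on a cyclic cover $y^d = \prod (x-b_i)^{a_i}$, holomorphic differentials are spanned by forms of the shape $\dfrac{x^j\,\d{x}}{y^k}$, and the condition for such a form to be holomorphic is a finite set of inequalities coming from the local behaviour at the ramification points over the $b_i$ and over $\infty$. Since the genus is $3$ in both cases (confirmed by the Riemann–Hurwitz count in Section~\ref{sec:cycliccovers}, giving $g=d+1-(\sum g_i)/2$), I expect exactly three independent such monomial forms, and the content of the lemma is to identify them and sort them into $\alpha^*$-eigenspaces.

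The key computational step is a local analysis at each ramification point, carried out using the explicit parametrisation~\eqref{cyclicparametrisation}. For the \Xfamily{}, the form $x^j\,\d{x}/y^k$ pulls back near a preimage $P$ of a branch point $b$ (where $x-b \sim z^{d/g}$) to something of the form $z^N\,\d{z}$ up to a unit, and holomorphicity requires $N\ge 0$; one must also check the point over $\infty$. I would run these inequalities for the candidate monomials and verify that $\d{x}/y^3$, $x\,\d{x}/y^3$ and $\d{x}/y^2$ are precisely the holomorphic ones — three forms, as required — and no other monomial survives. The eigenvalue assertions are then immediate: since $\alphaX$ sends $y\mapsto \zeta_4 y$, we have $(\alphaX)^*(x^j\,\d{x}/y^k)=\zeta_4^{-k}\,x^j\,\d{x}/y^k$, so the two $y^3$-forms are $\zeta_4$-eigenforms and the $y^2$-form is a $\zeta_4^{-2}=-1$-eigenform; since $\rhoX=(\alphaX)^2$ acts by $\zeta_4^{-2k}=(-1)^k$, the first two forms lie in $\Omega^-$ and the third in $\Omega^+$, matching the stated dimensions $2$ and $1$. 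The same scheme applies verbatim to the \Yfamily{} with $\zeta_6$ in place of $\zeta_4$: here I would verify that $\d{x}/y$, $y\,\d{x}/(x(x-1)(x-t))$ and $y^4\,\d{x}/(x^2(x-1)^2(x-t)^2)$ are holomorphic, rewriting the latter two in monomial form $y^k\,\d{x}/(\cdots)$ using $y^6=x^2(x-1)^2(x-t)^3$ to read off the $(\alphaY)^*$-eigenvalues, and check that $\rhoY=(\alphaY)^3$ separates them into $\Omega^-=\langle\omegaY_1,\omegaY_2\rangle$ and $\Omega^+=\langle\omegaY_3\rangle$.

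The main obstacle I anticipate is bookkeeping at the branch points whose branching order is not maximal — in the \Yfamily{} the exponents $(a_1,a_2,a_3,a_4)=(2,2,3,5)$ give $g_i=\gcd(a_i,6)\in\{2,1\}$, so the cover is only partially ramified over $0$ and $1$, and the local exponent computation via~\eqref{cyclicparametrisation} is more delicate than in the totally ramified \Xfamily{} case. Care is also needed at $\infty$, where one substitutes $x=1/u$ and tracks the order of vanishing, and one must confirm that the three exhibited forms are genuinely independent (not merely three holomorphic forms that might coincide projectively). None of this is conceptually hard, but getting every inequality and every eigenvalue exponent right is where the real work lies; once the holomorphicity and eigenvalue computations are in place, the eigenspace decomposition and the dimension count follow at once.
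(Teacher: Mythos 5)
Your proposal is correct and follows essentially the same route as the paper's (very terse) proof: checking holomorphicity via local expressions at the ramification points and reading off the $\alpha^*$- and $\rho^*$-eigenvalues in affine coordinates, with the dimension count $g=3$ guaranteeing the three exhibited forms span $\Omega(X)$. The extra care you flag for the partially ramified points of $\Y_t$ and the point over $\infty$ is exactly the bookkeeping the paper leaves implicit.
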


\begin{proof}By writing their local expressions, one can check that all these forms are holomorphic. The action of $\rho$ can be checked in the affine coordinates.
\end{proof}

By analysing the zeroes one obtains the following lemma.

\begin{lemma}\label{lem:4foldforms}
Let $t\in\P^*$.
\begin{enumerate}
\item The forms in $\P\Omega(\X_{t})^{-}$  having a 4-fold zero at a fixed point of $\rhoX$ are
\begin{itemize}
    \item $\omegaX_{1}$ which has a zero at the preimage of~$\infty$,
    \item $\omegaX_{2}$ which has a zero at the preimage of~$0$,
    \item $-\omegaX_{1}+\omegaX_{2}$ which has a zero at the preimage of~$1$, and
    \item $-t\omegaX_{1}+\omegaX_{2}$ which has a zero at the preimage of~$t$.
\end{itemize}
They all form an orbit under~$\mathrm{Aut}(\X_{t})$.
\item For $t\neq 1/2$, the only form in $\P\Omega(\Y_{t})^{-}$ which has a 4-fold zero at a fixed point of $\rhoY$ is~$\omegaY_{2}$.
\end{enumerate}
\end{lemma}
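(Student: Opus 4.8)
The plan is to compute, in each family, the zero divisor of a general element of the two-dimensional eigenspace $\Omega(\cdot)^{-}$ and to read off exactly when this entire divisor collapses onto a single fixed point of the Prym involution. Throughout I use that a holomorphic $1$-form on a genus $3$ curve has a zero divisor of degree $2g-2=4$, so that a \emph{$4$-fold zero at a fixed point $P$} means the zero divisor is exactly $4P$. First I would record the fixed points. For the \Xfamily{}, $\rhoX$ acts on the fibres of $\pi$ and fixes precisely the four totally ramified points $P_0,P_1,P_t,P_\infty$ over $0,1,t,\infty$; there are exactly four of them, matching Riemann--Hurwitz for the genus-$1$ quotient $\X_t/\rhoX$. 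For the \Yfamily{}, inspection of the inertia groups shows that $\rhoY=(\alphaY)^3$ lies in the inertia group only over $t$ (ramification index $2$) and over $\infty$ (index $6$); hence its fixed points are the three preimages $P_t^{(0)},P_t^{(1)},P_t^{(2)}$ of $t$ together with the single preimage $P_\infty$, again four points.

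For part (1), using the local parametrisation \eqref{cyclicparametrisation} at each ramification point one computes $\div(\omegaX_1)=4P_\infty$ and $\div(\omegaX_2)=4P_0$; in particular the pencil $\Omega(\X_t)^{-}=\langle\omegaX_1,\omegaX_2\rangle$ is base-point-free. Writing a general element as $(\lambda+\mu x)\,\omegaX_1$, its divisor is $\div(\lambda+\mu x)+4P_\infty$, where $\lambda+\mu x=\pi^{*}(\lambda+\mu x)$ is pulled back from $\P^1$. Since $\pi$ is totally ramified exactly over $0,1,t,\infty$, the function $\lambda+\mu x$ has a $4$-fold zero at a ramification point precisely when $-\lambda/\mu\in\{0,1,t\}$, and otherwise has four simple zeros in a non-ramified fibre (whose points are not $\rhoX$-fixed). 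Collecting the cases $-\lambda/\mu\in\{0,1,t,\infty\}$ yields exactly the four forms $\omegaX_1,\omegaX_2,-\omegaX_1+\omegaX_2,-t\omegaX_1+\omegaX_2$ with zeros at $P_\infty,P_0,P_1,P_t$. Finally, the Klein four-group of Möbius maps preserving $\{0,1,t,\infty\}$ (e.g.\ $x\mapsto t/x$) lifts to automorphisms of $\X_t$ commuting with $\rhoX$, hence preserving $\Omega(\X_t)^{-}$ and permuting the four fixed points transitively; as such a form is unique up to scale for each fixed point, the four forms constitute a single $\Aut(\X_t)$-orbit.

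For part (2) the same bookkeeping gives $\div(\omegaY_2)=4P_\infty$ and $\div(\omegaY_1)=\sum(\text{preimages of }0,1)$, all simple. Thus $\langle\omegaY_1,\omegaY_2\rangle$ is again base-point-free and $F\coloneqq\omegaY_2/\omegaY_1=y^2/\bigl(x(x-1)(x-t)\bigr)$ defines a degree-$4$ map $F\colon\Y_t\to\P^1$. The zero divisor of $\lambda\omegaY_1+\mu\omegaY_2=\omegaY_1(\lambda+\mu F)$ equals the fibre $F^{-1}(-\lambda/\mu)$, so a $4$-fold zero at a fixed point $P$ occurs if and only if $P$ is a totally ramified point of $F$ over a fixed point. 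Since $\div(F)=4P_\infty-\sum(\text{preimages of }0,1)$, we get $F^{-1}(0)=4P_\infty$, realised by the value $0$, i.e.\ by $\omegaY_2$ itself; it remains to rule out the three preimages of $t$.

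The main obstacle, and the only place where the hypothesis $t\neq1/2$ enters, is the local analysis of $F$ at $P_t^{(i)}$. The key simplification is the identity $F^3=1/\bigl(x(x-1)\bigr)$, immediate from $y^6=x^2(x-1)^2(x-t)^3$; hence $F=\bigl(x(x-1)\bigr)^{-1/3}$ is analytic in $x$ near $x=t$ (as $t\neq0,1$). With the local coordinate $z$ of \eqref{cyclicparametrisation} at $P_t^{(i)}$, for which $x=t+z^2$, one gets $F(z)=\bigl((t+z^2)(t+z^2-1)\bigr)^{-1/3}$, whose expansion
\[F(z)=\bigl(t(t-1)\bigr)^{-1/3}-\tfrac{1}{3}\bigl(t(t-1)\bigr)^{-4/3}(2t-1)\,z^2+O(z^4)\]
has vanishing $z^2$-coefficient precisely when $2t-1=0$. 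Therefore, for $t\neq1/2$ the ramification index of $F$ at each $P_t^{(i)}$ is exactly $2$, the fibre through $P_t^{(i)}$ is never $4P_t^{(i)}$, and no element of the pencil has a $4$-fold zero there; consequently $\omegaY_2$ is the unique such form up to scale. (When $t=1/2$ the coefficient vanishes and the index jumps to $4$, precisely the extra degeneration accounting for the excluded Wiman curve $\Y_{\nicefrac{1}{2}}$.)
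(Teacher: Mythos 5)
Your proposal is correct, and every step checks out: the fixed-point counts for $\rhoX$ and $\rhoY$, the divisors $\div(\omegaX_1)=4P_\infty$, $\div(\omegaX_2)=4P_0$, $\div(\omegaY_2)=4P_\infty$ and $\div(\omegaY_1)=\sum(\text{preimages of }0,1)$, and the ramification analysis all agree with what the computation must give. The underlying content is the same as the paper's (orders of vanishing at the $\rho$-fixed points), but your organisation is genuinely different and, in my view, sharper. The paper argues pointwise \enquote{using local charts}; you instead exploit the pencil structure globally, writing a general form as $\omegaX_1\cdot\pi^*(\lambda+\mu x)$, respectively $\omegaY_1\cdot(\lambda+\mu F)$ with $F=\omegaY_2/\omegaY_1$, so that zero divisors become fibres of explicit degree-$4$ maps and the classification reduces to locating totally ramified fibres over $\rho$-fixed points. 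The identity $F^3=1/\bigl(x(x-1)\bigr)$ is a nice device: it makes $F$ visibly an analytic function of $x$ near the preimages of $t$, and the computation of the $z^2$-coefficient $-\tfrac13\bigl(t(t-1)\bigr)^{-4/3}(2t-1)$ pinpoints exactly where the hypothesis $t\neq\nicefrac{1}{2}$ enters --- something the paper's proof leaves entirely implicit in the assertion that a form \enquote{can have at most 2-fold zeroes at the preimages of $t$} (which, as your aside shows, genuinely fails at $t=\nicefrac{1}{2}$, where the index jumps to $4$ and three extra forms appear, consistent with $\Y_{\nicefrac{1}{2}}\cong\X_{\zeta_6}$ lying in the \Xfamily{}). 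Your orbit argument for part (1) --- transitivity of the lifted Klein four-group on $\{0,1,t,\infty\}$ plus uniqueness of the form with divisor $4P$ for each fixed point $P$ --- is the same as the paper's. No gaps.
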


\begin{proof}\textit{1.} For any $\X_t$, the preimages of $0$, $1$, $t$ and $\infty$ are the only fixed points of $\rhoX$. Using local charts, it is easy to see that these are the only forms with 4-fold zeroes at those points.

The last statement follows from the fact that $\Aut(\X_{t})$ permutes the preimages of $0$, $1$, $t$ and~$\infty$.

\textit{2.} Observe that the differential $\d{x}$ does not vanish on $\Y_{t}$ away from the preimages of $0$, $1$, $t$ and $\infty$. Under the parametrisations \autoref{cyclicparametrisation}, the local expression of $\d{x}$ around the preimages of 0 and 1 is $\d{x}=3z^{2}\d{z}$ and around the preimages of $t$ is $\d{x}=2z\d{z}$. Looking at the local expressions, one can see that $\omegaY_{1}$ has simple zeroes at the (four) preimages of $0$ and $1$, and $\omegaY_{2}$ has a 4-fold zero at infinity.

Again using local charts, it is easy to see that a form $u\omegaY_{1}+v\omegaY_{2}$, $u,v\in\C$, can have at most 2-fold zeroes at the preimages of~$t$.

On the other hand, if $u\omegaY_{1}+v\omegaY_{2}$ has a 4-fold zero at $\infty$, then the local expression above implies that~$u=0$.
\end{proof}

We can now state the converse of~\autoref{thm:families}.

\begin{prop}\label{thm:orbicrit}
Let $t\in\P^*$ and let $\O_D$ be some real quadratic order.
\begin{enumerate}
\item If $\PX$ admits proper real multiplication by $\O_D$ with $\omegaX_1$ as an eigenform then $\omegaX_2$, $-\omegaX_1+\omegaX_2$ and $-t\omegaX_1+\omegaX_2$ are also eigenforms and $(\X_t,\omegaX_1)$ corresponds to an orbifold point on~$\W$.

Moreover, if $\X_{t}\cong\X_{-1}$, then $(\X_t,\omegaX_1)$ is of order $4$; if $\X_{t}\cong\X_{\zeta_{6}}$, then $(\X_t,\omegaX_1)$ is of order $6$; otherwise, $(\X_t,\omegaX_1)$ is of order~$2$.
%%%%
\item If $\PY$ admits proper real multiplication by $\O_D$ with $\omegaY_2$ as an eigenform then $(\Y_t,\omegaY_2)$ corresponds to an orbifold point on~$\W$.

Moreover, if $\Y_{t}=\Y_{\nicefrac{1}{2}}$, then $(\Y_t,\omegaY_2)$ is of order $6$; otherwise, $(\Y_t,\omegaY_2)$ is of order~$3$.
\end{enumerate}
\end{prop}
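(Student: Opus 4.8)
The plan is to check that each hypothesis places the relevant flat surface inside the locus $\E_D(4)$, so that it projects to a point of $\W$, and then to recognise that point as an orbifold point by exhibiting the cyclic automorphism generating the family. The orders are then read off from the classification already established in \autoref{thm:families}.

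First I would treat part~(1). By the construction in \autoref{sec:cycliccovers}, $\rhoX=(\alphaX)^2$ is a Prym involution with $2$-dimensional Prym variety $\PX$, so the first defining condition of $\E_D(4)$ holds. By \autoref{lem:4foldforms} the form $\omegaX_1$ has a single, fourfold, zero located at the $\rhoX$-fixed preimage of $\infty$, and by \autoref{lem:forms} it lies in $\Omega(\X_t)^-$, so $\rhoX^*\omegaX_1=-\omegaX_1$; this is the second condition. The third condition, proper real multiplication by $\O_D$ with $\omegaX_1$ as eigenform, is precisely the hypothesis. Hence $(\X_t,\omegaX_1)\in\E_D(4)$ and projects to a point of $\W$. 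To recognise it as an orbifold point I would invoke the automorphism $\alphaX$: it fixes the zero of $\omegaX_1$ and a direct computation gives $(\alphaX)^*\omegaX_1=\zeta_4\,\omegaX_1$, so $[(\alphaX)^*\omegaX_1]=[\omegaX_1]$ while $(\alphaX)^*\omegaX_1\neq\pm\omegaX_1$. By the third characterisation in \autoref{prop:orbifoldpoints} the point is therefore an orbifold point.

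The order statement for part~(1) now follows from \autoref{thm:families}, which determines the order from the isomorphism type of $\X_t$: order $4$ if $\X_t\cong\X_{-1}$, order $6$ if $\X_t\cong\X_{\zeta_6}$, and order $2$ otherwise; alternatively, by \autoref{orbifoldscyclic} the order is half the size of the (cyclic) stabiliser of the zero, which is supplied by \autoref{lem:families}. For the assertion that $\omegaX_2$, $-\omegaX_1+\omegaX_2$ and $-t\omegaX_1+\omegaX_2$ are also eigenforms, I would use that by \autoref{lem:4foldforms} these four forms constitute a single $\Aut(\X_t)$-orbit. Given $\gamma\in\Aut(\X_t)$ with $\gamma^*\omegaX_1$ a scalar multiple of one of the others, the conjugate embedding $a\mapsto\gamma^{-1}\iota(a)\gamma$ (where $\iota\colon\O_D\hookrightarrow\End(\PX)$ denotes the given real multiplication) has $\gamma^*\omegaX_1$ as an eigenform; since conjugation by a polarisation-preserving automorphism commutes with the Rosati involution and preserves properness (any extension to a larger order could be conjugated back), this conjugate is again proper, self-adjoint real multiplication by $\O_D$.

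Part~(2) is entirely parallel, with $\rhoY=(\alphaY)^3$, the form $\omegaY_2$ whose single zero sits at a $\rhoY$-fixed point (\autoref{lem:4foldforms}, \autoref{lem:forms}), and the automorphism $\alphaY$ satisfying $(\alphaY)^*\omegaY_2=\zeta_6\,\omegaY_2$; \autoref{thm:families} then gives order $6$ at $\Y_{\nicefrac{1}{2}}$ and order $3$ otherwise. The step I expect to require the most care is the transport of the real-multiplication structure along the automorphism orbit in part~(1): one must confirm that conjugation by $\gamma$ preserves both self-adjointness with respect to the $(1,2)$-polarisation and properness of the order, so that the resulting forms genuinely are eigenforms for proper real multiplication by the \emph{same} discriminant $D$, and hence correspond to points of $\W$.
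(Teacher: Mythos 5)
Your proposal is correct and follows essentially the same route as the paper: verify the defining conditions of $\E_D(4)$ so the surface lands on $\W$, detect the orbifold point via the cyclic automorphism $\alpha$ (the paper cites \autoref{thm:orbifoldpts}, you cite the equivalent \autoref{prop:orbifoldpoints}), read off the orders from \autoref{lem:families}/\autoref{thm:families} (the paper routes the higher-order cases through \autoref{fermateigen} and \autoref{lem:actionbeta}, which is what makes the order-$4$ and order-$6$ claims precise for the specific eigenform), and obtain the remaining eigenforms by conjugating the real multiplication along the $\Aut(\X_t)$-orbit of \autoref{lem:4foldforms}. Your added care about conjugation preserving self-adjointness and properness fills in a detail the paper leaves implicit.
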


\begin{proof}By the previous lemma, if one of the four forms on $\X_{t}$ is an eigenform for some choice of real multiplication $\O_D\hookrightarrow\End\PX$, then the other three are also eigenforms for the choice of real multiplication conjugate by the corresponding automorphism. The statements about the points of higher order follow from \autoref{fermateigen} and \autoref{lem:actionbeta}.

The rest of the claims follows from~\autoref{thm:orbifoldpts} and~\autoref{lem:families}.
\end{proof}

\begin{rem}\label{rem:wollmilchsau}
Note that, while the \Xfamily{} $\X$ is the same curve inside $\M_3$ that is studied in \cite{HS} and \cite{moellerst}, the flat structures we consider on the fibres are different and the families are actually disjoint in $\Omega\M_3$. More precisely, we are interested in Prym-Teichmüller curves, i.e. a differential in the $-1$ eigenspace for the Prym involution, while the Wollmilchsau Teichmüller curve is constructed as a cover of the elliptic curve $\X_t/\rho$, i.e. has the flat structure of the differential in the $+1$ eigenspace. In particular, for our choices of differential $(\X_t,\omega_t)$, the (projection of the) $\SL_2(\R)$ orbit will never be the curve $\X$, but the Prym-Teichmüller curve $W_D(4)$ whenever the real-multiplication condition is satisfied.
\end{rem}

\subsection{Homology}

To calculate the Jacobians of the fibres of the \Xfamily{} $\X$ and the \Yfamily{} $\Y$, we also need a good understanding of their homology.

Consider again the general family $\ZZ\rightarrow\P^*$ introduced at the beginning of~\autoref{sec:cycliccovers}. Set $\P^*_t\coloneqq\P^*-\{t\}$ and $\ZZ^*_t\coloneqq\pi^{-1}(\P^*_t)$, where $\pi\colon\ZZ_t\rightarrow\P^1$ is the projection onto the $x$ coordinate. We thus obtain an unramified cover and the sequence
\[1\rightarrow\pi_1(\ZZ_t^*)\rightarrow\pi_1(\P^*_t)\rightarrow C_d\rightarrow 1,\]
where $C_d$ denotes the cyclic group of order $d$, is exact. Let $\sigma_P$ denote a simple counter-clockwise loop around the point $P\in\mathbb{P}^1$. Then $\pi_1(\P^*_t)$ is generated by $\sigma_0,\sigma_1,\sigma_t$ and $\sigma_\infty$ and their product is trivial. Observe that these four loops are mapped to elements of order $\nicefrac{d}{\gcd(a_{1},d)}$, $\nicefrac{d}{\gcd(a_{2},d)}$, $\nicefrac{d}{\gcd(a_{3},d)}$ and $\nicefrac{d}{\gcd(a_{4},d)}$ in~$C_d$ respectively.
Moreover, cycles in $\pi_1(\P^*_t)$ whose image in $C_d$ is trivial lift to cycles in~$\H_1(\ZZ_t,\Z)$.

For cycles $F,G\in\H_1(\ZZ_t,\Z)$, we pick representatives intersecting at most transversely and define the intersection number $F\cdot G\coloneqq\sum F_p\cdot G_p$, where the sum is taken over all $p\in F\cap G$ and for any such $p$, we define $F_p\cdot G_p\coloneqq+1$ if $G$ approaches $F$ \enquote{from the right in the direction of travel} and $F_p\cdot G_p\coloneqq-1$ otherwise, cf.~\autoref{CyclesC4}.

In the following, we identify $\Gal(\ZZ_t/\P^1)=C_d$ with the $d$th complex roots of unity and choose the generator $\alpha$ as $\exp(2\pi\i/d)$. Since all the fibres are topologically equivalent, let us suppose for simplicity $t\in\R$, $t>1$. Then, the simply-connected set $\P^1-[0,\infty]$ contains no ramification points and therefore has $d$ disjoint preimages $S_1,\dotsc,S_d$, which we call \emph{sheets} of $\ZZ_t$. These are permuted transitively by $\alpha$ and we choose the numbering so that $\alpha(S_{[n]})=S_{[n+1]}$, where $[n]\coloneqq n\bmod d$. The sheet changes are given by the monodromy: a path travelling around $0$ in a counter-clockwise direction on sheet $[n]$ continues onto sheet $[n+a_0]$ after crossing the interval $(0,1)$ and similarly for the other branch points.

We are now in a position to explicitly describe the fibrewise homology of $\X$ and~$\Y$.

Let $\FX$ denote the lift of $\sigma_1^{-1}\sigma_0$ that starts on sheet number $1$ of $\X_t$ and let $\GX$ denote the lift of $\sigma_t^{-1}\sigma_1$ that also starts on sheet $1$ (see~\autoref{CyclesC4}). Observe that~$\FX\cdot\GX=+1$.
\begin{figure}%[!htp]
\centering
\includegraphics[width=\textwidth]{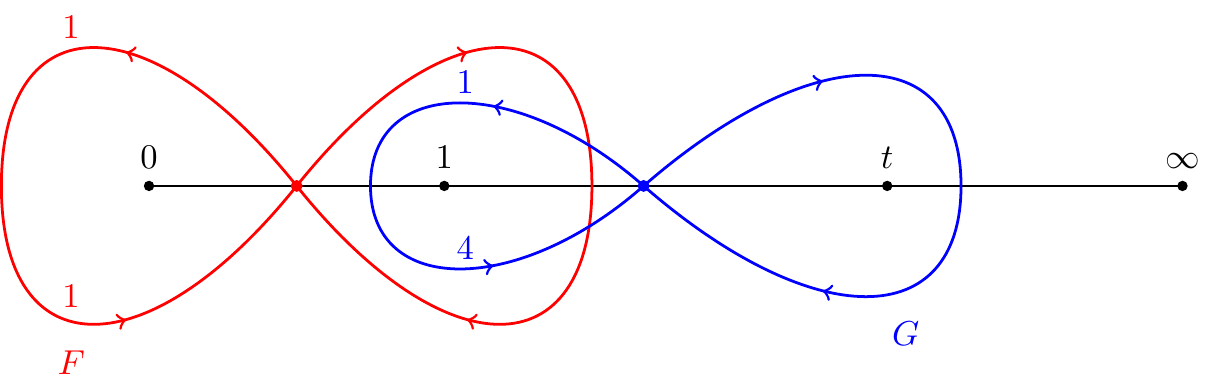}
\caption{The cycles $\FX$ and $\GX$ on $\X_{t}$. The upper-left parts of both cycles lie on sheet number 1. Observe that $\FX\cdot \GX=1$.}
\label{CyclesC4}
\end{figure}

Similarly, denote by $\FY$ and $\GY$ the lifts of $\sigma_1^{-1}\sigma_0$ and $\sigma_\infty^{-3}\sigma_t$, that start on sheet $1$ and $5$ of $\Y_t$, respectively (see~\autoref{CyclesC6}). Observe that $\FY\cdot\GY=0$.
\begin{figure}%[!htp]
\centering
\includegraphics[width=\textwidth]{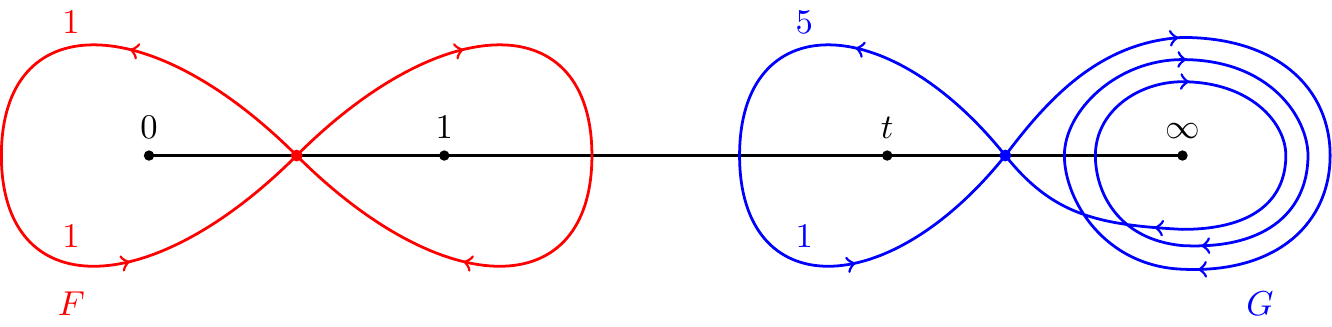}
\caption{The cycles $\FY$ and $\GY$ on $\Y_{t}$. The upper-left parts  lie on sheets number 1 and 5, respectively.}
\label{CyclesC6}
\end{figure}

\begingroup
\def\alphaX{\alpha}\def\alphaY{\alpha}
\def\FX{F}\def\FY{F}\def\GX{G}\def\GY{G}
\def\rhoX{\rho}\def\rhoY{\rho}

To ease notation, we will drop superscripts in the following lemma, as no confusion can arise.

\begin{lemma}\label{lem:hombasis} Let $t\in\P^*$.
\begin{enumerate}
\item The cycles $\FX,\alphaX \FX,\alphaX^2 \FX,\GX,\alphaX \GX,\alphaX^2 \GX$ yield a basis of $\H_1(\X_t,\Z)$. Moreover, the cycles
    \[\FX+\alphaX \FX+\GX+\alphaX \GX,\,
    -\GX+\alphaX^{2}\GX,\,
    \alphaX \FX+\alphaX^{2}\FX-\GX+\alphaX^{2}\GX,\,
    \FX+2\alphaX \FX+\alphaX^{2}\FX\]
span a $(1,2)$-polarised, $\rhoX$-anti-invariant sublattice of $\H_1(\X_t,\Z)$, which we denote by $\H_1^-(\X_t,\Z)$. The complementary $\rhoX$-invariant sublattice, $\H_1^+(\X_t,\Z)$, is spanned by $\FX+\alphaX^2 \FX, \GX+\alphaX^2 \GX$.
\item The cycles $F,\alphaY F,\alphaY^{3}F, \alphaY^{4}F,G,\alphaY G$ yield a basis of $\H_{1}(\Y_{t},\Z)$. Moreover, the cycles
    \[F-\alphaY^{3}F,\,
    \alphaY^{4}F-\alphaY F,\,
    G,\,
    \alphaY G\]
span a $(1,2)$-polarised, $\rhoY$-anti-invariant sublattice of $\H_{1}(\Y_{t},\Z)$, which we denote by $\H_1^-(\Y_t,\Z)$. The complementary $\rhoY$-invariant sublattice, $\H_1^+(\Y_t,\Z)$, is spanned by $F+\alphaY^{3}F, \alphaY^{4}F+\alphaY F$.
\end{enumerate}
\end{lemma}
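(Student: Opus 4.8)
Each case of the lemma packages two claims: that the six listed cycles form an integral basis of the first homology, and that the displayed combinations cut out the $\rho$-eigenlattices with the stated polarisation. The guiding idea is to regard $\H_1(\X_t,\Z)$ and $\H_1(\Y_t,\Z)$ as modules over the group ring $\Z[\alpha]$ and to use that $\alpha$, being holomorphic, is orientation-preserving and so leaves the intersection form invariant. Together with antisymmetry this gives $\alpha^i F\cdot\alpha^j F=F\cdot\alpha^{j-i}F$, and likewise for the mixed and the $G$-$G$ products, so that every intersection number is determined by the finitely many base values $F\cdot\alpha^k F$, $F\cdot\alpha^k G$ and $G\cdot\alpha^k G$. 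These I would read off from the sheet-and-monodromy model set up before the lemma, the normalisations $F\cdot G=+1$ on $\X_t$ and $F\cdot G=0$ on $\Y_t$ being the values recorded in \autoref{CyclesC4} and \autoref{CyclesC6}. Assembling them into the $6\times6$ Gram matrix and computing its determinant, the first claim reduces to checking this determinant is $\pm1$: the intersection form on $\H_1$ of a genus-three surface is unimodular, so $2g=6$ cycles span the whole lattice exactly when their Gram determinant is a unit.

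A pair of module relations organises the rest. Since $\X_t/\langle\alpha\rangle$ and $\Y_t/\langle\alpha\rangle$ are both isomorphic to $\P^1$ and hence have no rational first homology, the invariants $\H_1(\cdot,\Q)^{\langle\alpha\rangle}$ vanish; as the image of the norm element $N\coloneqq\sum_k\alpha^k$ lies in these invariants, $N$ kills $\H_1(\cdot,\Q)$ and, by torsion-freeness, $\H_1(\cdot,\Z)$. This yields $(1+\alpha+\alpha^2+\alpha^3)F=0$ (and the same for $G$) on $\X_t$, and the analogous degree-five relation on $\Y_t$. For the \Yfamily{} I would additionally record that $G$ is Prym-anti-invariant, $\rho G=\alpha^3 G=-G$, which follows from $\rho$ shifting the sheets by three and reversing $G$ in \autoref{CyclesC6} (equivalently, the $\alpha$-eigenvalues on the summand $\Z[\alpha]\cdot G$ are primitive sixth roots of unity).

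Granting the basis, the eigenlattice claims become a matter of substitution and one short bilinear-algebra computation. Anti-invariance of each generator is immediate from the relations: on $\X_t$, replacing $\alpha^3 F$ by $-(F+\alpha F+\alpha^2 F)$ gives $\alpha^2(F+2\alpha F+\alpha^2 F)=-(F+2\alpha F+\alpha^2 F)$, the other three generators behave identically, and $F+\alpha^2 F$ and $G+\alpha^2 G$ are visibly $\rho$-fixed; the \Yfamily{} is handled the same way using $\alpha^3 G=-G$. To identify the span of the four anti-invariant generators with the saturated lattice $\H_1^-(\X_t,\Z)$ and to pin down the polarisation, I would compute the $4\times4$ Gram matrix of those generators, bring it to its symplectic normal form to exhibit elementary divisors $1,2$, and compare its Pfaffian with the Pfaffian $2$ of the $(1,2)$-polarisation that a genus-three Prym variety is known to carry; since the Pfaffian of the restricted form scales by the sublattice index, equality forces that index to be $1$, so the four cycles are a genuine $\Z$-basis and the type is $(1,2)$. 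The invariant pair gives $\H_1^+(\X_t,\Z)$ analogously, and the \Yfamily{} proceeds verbatim.

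The one genuinely delicate step, and the expected main obstacle, is the determination of the base intersection numbers: faithfully following the lifted cycles $F$, $G$ and their $\alpha$-translates from sheet to sheet and fixing every crossing sign by the stated ``from the right in the direction of travel'' rule. A secondary pitfall worth flagging in the third step is that one cannot certify the eigenlattice basis by a naive change-of-basis determinant, since the internal sum of the two eigenlattices already has index $4$ in $\H_1(\X_t,\Z)$ — exactly the obstruction encoding the non-principal polarisation — which is why the Pfaffian comparison, rather than a determinant equal to $\pm1$, is the correct tool.
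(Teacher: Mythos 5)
Your proposal is correct and follows essentially the same route as the paper: both hinge on computing the $6\times6$ intersection matrices of the listed cycles, certifying the basis claim by unimodularity of the intersection form on a genus-three surface, and reading off the $(1,2)$-polarisation from the Gram matrix of the eigencycles. Your supplementary observations --- deriving $(1+\alpha+\alpha^{2}+\alpha^{3})F=0$ from the vanishing of $\H_1(\P^{1},\Q)$ via the norm element rather than from the intersection data, and using the Pfaffian comparison to see that the four anti-invariant cycles span the saturated lattice $\H_1^-$ rather than a proper finite-index sublattice --- are sound and in fact make explicit a point the paper's proof leaves implicit.
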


\begin{proof}
\textit{1.} An elementary but somewhat tedious calculation yields the intersection matrix
\[\begin{pmatrix}
0&1&0&1&-1&0\\
-1&0&1&0&1&-1\\
0&-1&0&0&0&1\\
-1&0&0&0&1&0\\
1&-1&0&-1&0&1\\
0&1&-1&0&-1&0\\
\end{pmatrix}\]
for the above cycles on $\X_t$. As it has rank $6$ and determinant $1$, these cycles span all of $\H_1(\X_t,\Z)$. Furthermore, this immediately provides us with the relations
\begin{equation*}
\alphaX^3 \FX=-\FX-\alphaX \FX -\alphaX^2 \FX\quad\text{and}\quad\alphaX^3 \GX=-\GX-\alphaX \GX -\alphaX^2 \GX,
\end{equation*}
which confirms the claimed anti-invariance. The change to the second set of cycles yields
\[\begin{pmatrix}
0&0&1&0&&\\
0&0&0&2&&\\
-1&0&0&0&&\\
0&-2&0&0&&\\
&&&&0&2\\
&&&&-2&0\\
\end{pmatrix}\]
where the upper-left block is the anti-invariant and the lower-right block is the invariant part. Calculating determinants, we see that both blocks have determinant 4, proving the claim about the polarisation.

\textit{2.} Proceeding as before, one finds the following intersection matrix for the cycles on $\Y_{t}$
    \[\begin{pmatrix}
    0 & 0 & 0 & 1 & 0 & 0 \\
    0 & 0 & -1 & 0 & 0 & 0 \\
    0 & 1 & 0 & 0 & 0 & 0 \\
    -1 & 0 & 0 & 0 & 0 & 0 \\
    0 & 0 & 0 & 0 & 0 & 1 \\
    0 & 0 & 0 & 0 & -1 & 0 \\
    \end{pmatrix},\]
proving that they generate $\H_1(\Y_t,\Z)$, and the following one for the second set of cycles
\[\begin{pmatrix}
0&2&0&0&&\\
-2&0&0&0&&\\
0&0&0&1&&\\
0&0&-1&0&&\\
&&&&0&2\\
&&&&-2&0\\
\end{pmatrix},\]
yielding the $(1,2)\times(2)$-polarisation on the product.
\end{proof}
\endgroup

\subsection{Special points}
\label{specialpoints}

We briefly summarise some of the subtleties occurring at those points admitting additional symmetries.

\addsubsection{The curve \texorpdfstring{$\X_{2}$}{X\_2}}

In the \Xfamily{} $\X$, the fibres over $\nicefrac{1}{2}$, $-1$ and $2$ form an orbit under the action of $\Sym_3$. Over these points, $\alphaX$ extends to an automorphism $\betaX$ satisfying $\betaX^2=\alphaX$, i.e. a symmetry of order $8$, making them all isomorphic to the well-known \emph{Fermat curve}. More precisely, $\betaX$ may be obtained by lifting the automorphism that permutes two of the branch points and fixes the remaining pair on $\P^1$. Note that this may be achieved in two ways, e.g. for $t=2$, we obtain
\begin{align*}
\betaX_1\colon(x,y)&\mapsto\left(\frac{x}{x-1},\zeta_8\frac{y}{x-1}\right)\text{ and}\\
\betaX_2\colon(x,y)&\mapsto(2-x,\zeta_8y).
\end{align*}
Observe that $\betaX_1$ fixes $2$ and $0$ while interchanging $1$ and $\infty$, while $\betaX_2$ fixes $1$ and $\infty$ while interchanging $2$ and $0$. It is straight-forward to check the analogous statement of \autoref{lem:4foldforms} in this case.

\begin{lemma}\label{fermateigen}
Let $t$ be one of $\nicefrac{1}{2}$, $-1$ or $2$. Then the two forms from \autoref{lem:4foldforms} with zeros at the fixed points of $\betaX_1$ are eigenforms for $\betaX_1$, while the other two forms are eigenforms for $\betaX_2$.
\end{lemma}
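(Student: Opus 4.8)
\section*{Proof proposal}

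The plan is to avoid computing pullbacks directly and instead exploit the uniqueness of forms with a prescribed fourfold zero established in \autoref{lem:4foldforms}. I will treat the case $t=2$, for which the automorphisms $\betaX_1,\betaX_2$ are written out explicitly; the values $\nicefrac{1}{2}$ and $-1$ follow verbatim after replacing $\betaX_1,\betaX_2$ by the corresponding order-$8$ automorphisms, since these three fibres form a single $\Sym_3$-orbit. First I would record the elementary relation $\betaX_1^2=\betaX_2^2=\alphaX$, obtained by composing the given formulas; in particular $\rhoX=\alphaX^2=\betaX_i^4$, so the Prym involution is a power of each $\betaX_i$. Consequently $\betaX_i^*$ commutes with $\rhoX^*$ and therefore preserves the $\rhoX$-eigenspace $\Omega(\X_t)^{-}=\langle\omegaX_1,\omegaX_2\rangle$.

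Next I would locate the fixed points on the base. The $\P^1$-automorphism underlying $\betaX_1$ is $x\mapsto x/(x-1)$, which fixes $0$ and $2$ and interchanges $1$ and $\infty$, while the one underlying $\betaX_2$ is $x\mapsto 2-x$, which fixes $1$ and $\infty$ and interchanges $0$ and $2$. Since each of the four branch values $0,1,t,\infty$ is totally ramified (all $g_i=\gcd(a_i,4)=1$), it has a single preimage on $\X_t$; hence $\betaX_1$ fixes the preimages of $0$ and $2$ and $\betaX_2$ fixes the preimages of $1$ and $\infty$.

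The heart of the argument is then immediate. Let $P$ be a fixed point of $\betaX_i$ lying over a fixed branch value, and let $\omega$ be the form of \autoref{lem:4foldforms} with a fourfold zero at $P$. Because $\betaX_i^*$ preserves $\Omega(\X_t)^{-}$ and $\betaX_i(P)=P$, the form $\betaX_i^*\omega$ again lies in $\Omega(\X_t)^{-}$ and has a fourfold zero at $P$. As a holomorphic $1$-form on a genus-$3$ curve has total zero multiplicity $4$, \autoref{lem:4foldforms} exhibits, up to scale, a \emph{unique} form in $\Omega(\X_t)^{-}$ with a fourfold zero at $P$; hence $\betaX_i^*\omega=\lambda\omega$ for some scalar $\lambda$, i.e. $\omega$ is a $\betaX_i$-eigenform. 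Matching the fixed points with the zero-loci listed in \autoref{lem:4foldforms}, the forms with zeros over $0$ and $2$ (namely $\omegaX_2$ and $-t\omegaX_1+\omegaX_2$) are eigenforms for $\betaX_1$, and the forms with zeros over $1$ and $\infty$ (namely $-\omegaX_1+\omegaX_2$ and $\omegaX_1$) are eigenforms for $\betaX_2$, as claimed. I do not expect a genuine obstacle here: the only points requiring care are verifying that each $\betaX_i$ really preserves $\Omega(\X_t)^{-}$ — which is exactly why recording $\rhoX=\betaX_i^4$ at the outset is essential — and noting that total ramification makes the relevant $\rhoX$-fixed point the unique preimage of its branch value, so that it is automatically fixed by $\betaX_i$ rather than merely permuted.
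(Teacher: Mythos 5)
Your argument is correct, and all the ingredients it invokes are genuinely available in the paper. Note that the paper offers no written proof of this lemma at all --- it only remarks beforehand that the statement is ``straight-forward to check'', the implied check being a direct computation of the pullbacks of $\omegaX_1,\omegaX_2$ under $\betaX_1$ and $\betaX_2$ in affine coordinates (as is done for $\rhoX$ in \autoref{lem:forms}). Your route is different and arguably cleaner: from $(\betaX_i)^2=\alphaX$ you get $(\betaX_i)^4=\rhoX$, hence $(\betaX_i)^*$ preserves $\Omega(\X_{t})^{-}$; total ramification of the four branch values makes each relevant $\rhoX$-fixed point the unique preimage of its branch value, hence fixed by the appropriate $\betaX_i$; and the uniqueness, up to scale, of the form in $\Omega(\X_{t})^{-}$ with a $4$-fold zero at a given such point (\autoref{lem:4foldforms}) then forces the eigenform property. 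A coordinate check confirms your conclusion: for $t=2$ one finds $(\betaX_1)^*\omegaX_2=\zeta_8\,\omegaX_2$ and $(\betaX_1)^*(-2\omegaX_1+\omegaX_2)=-\zeta_8\,(-2\omegaX_1+\omegaX_2)$, while $(\betaX_1)^*$ interchanges the lines spanned by $\omegaX_1$ and $-\omegaX_1+\omegaX_2$ (and symmetrically for $\betaX_2$). What the direct computation buys in addition is the explicit eigenvalues; what your argument buys is independence from coordinates, so it transfers verbatim to $t=\nicefrac{1}{2}$ and $t=-1$ without writing out the order-$8$ automorphisms there.
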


\addsubsection{The curve \texorpdfstring{$\Y_{\nicefrac{1}{2}}$}{Y\_1/2} (or \texorpdfstring{$\X_{\zeta_{6}}$}{X\_zeta6})}

The only member of the \Yfamily{} $\Y$ whose automorphism group contains a cyclic group of order larger than 6 is $\Y_{\nicefrac{1}{2}}$, admitting an automorphism of order $12$, $\betaY(x,y)=(1-x,\zeta_{12}^{7}y)$, satisfying $\betaY^{2}=\alphaY$. In contrast to the case of $\X_2$, however, the automorphism $\betaY$ generates the full automorphism group.

Recall that, by~\autoref{thm:families}, the curve $\Y_{\nicefrac{1}{2}}$ is isomorphic to the curve $\X_{\zeta_{6}}$ of the \Xfamily. However, here we will use the model of the curve as a member of the \Yfamily.

Note first that $\betaY$ descends to the automorphism $z\mapsto 1-z$ of $\P^1$. Moreover $\betaY$ fixes $\infty$ with rotation number $\zeta_{12}$ and therefore $\betaY$ acts as $(1^{+},1^{-},2^{+},\dotsc,6^{+},6^{-})$ on the half-sheets, where we write $k^{+}$ (respectively $k^{-}$) for the upper half-plane (respectively lower half-plane) corresponding to the $k$th sheet.

By letting the initial points of $\FY$ and $\GY$ go to $1$ and $\infty$, respectively, and shrinking the cycles around the preimages of 0, 1, $t$ and $\infty$ one can use the  (equivalent) choice of cycles pictured in \autoref{CyclesShrunkC6}.

\begin{figure}[!htp]
\centering
\includegraphics{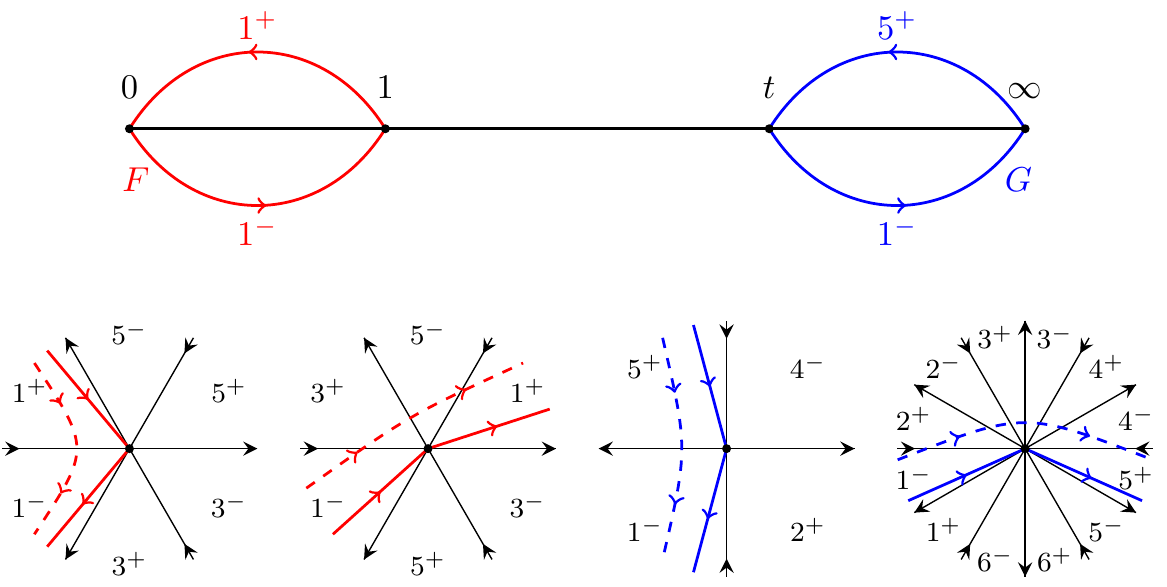}
\caption{The shrunk cycles $\FY$ and $\GY$, and the process of shrinking around the preimages of 0, 1, $t$ and $\infty$, respectively.}
\label{CyclesShrunkC6}
\end{figure}

After the shrinking process, the cycles $\FY$ and $\GY$ in~$\Y_{\nicefrac{1}{2}}$ have the shape depicted in~\autoref{cyclesC12}.

\begin{figure}[!htp]
\centering
\includegraphics{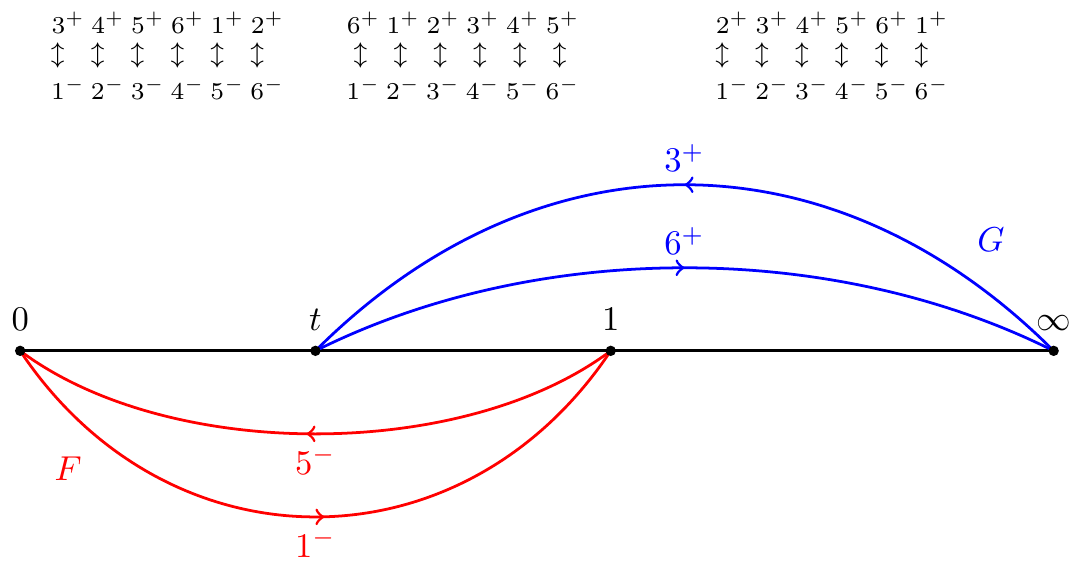}
\caption{The cycles $\FY$ and $\GY$ in $\Y_{\nicefrac{1}{2}}$.}
\label{cyclesC12}
\end{figure}

Taking all this into account, one can easily calculate the analytic and rational representations of $\betaY$.

\begin{lemma}\label{lem:actionbeta}The analytic and rational representations of $\betaY$ with respect to the bases $\H_{1}(\Y_{\nicefrac{1}{2}},\Z)=\langle \FY, \alphaY \FY, \alphaY^{3} \FY, \alphaY^{4} \FY, \GY, \alphaY \GY\rangle_{\Z}$ and $\Omega(\Y_{\nicefrac{1}{2}})=\langle\omegaY_{1},\omegaY_{2},\omegaY_{3}\rangle$ are given, respectively, by
    \[A_{\betaY}=\begin{pmatrix}
    \zeta_{12}^{-1} & 0 & 0 \\
    0 & \zeta_{12}^{7} & 0 \\
    0 & 0 & \zeta_{12}^{-2}
    \end{pmatrix}
    \qquad
    R_{\betaY}=\begin{pmatrix}
    0 & 0 & 1 & -1 & -1 & 0 \\
    0 & -1 & 1 & 1 & 0 & -1 \\
    1 & -1 & 0 & 0 & 1 & 0 \\
    1 & 1 & 0 & -1 & 0 & 1 \\
    0 & -1 & 0 & 1 & 1 & -1 \\
    1 & 1 & -1 & -1 & 1 & 2
    \end{pmatrix}.\]
In particular, $\omegaY_2$ is an eigenform for $\betaY$.
\end{lemma}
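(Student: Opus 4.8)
The plan is to compute the two representation matrices independently and then read off the eigenform claim as a corollary. For the analytic representation $A_{\betaY}$, I would work directly in the affine coordinates. Since $\betaY(x,y)=(1-x,\zeta_{12}^{7}y)$, I pull back each basis form $\omegaY_i$ from \autoref{lem:forms} and express the result in terms of the $\omegaY_j$. Concretely, $\betaY^{*}(\d{x})=-\d{x}$ and $\betaY^{*}(y)=\zeta_{12}^{7}y$, so for $\omegaY_1=\d{x}/y$ one gets $\betaY^{*}\omegaY_1=-\zeta_{12}^{-7}\,\d{x}/y=\zeta_{12}^{-1}\omegaY_1$ after simplifying the sign (using $-1=\zeta_{12}^{6}$), and similarly for $\omegaY_2$ and $\omegaY_3$, where one must also track how $x(x-1)(x-t)$ transforms under $x\mapsto 1-x$ at $t=\nicefrac{1}{2}$ (here $x-t\mapsto t-x$, so the product $x(x-1)(x-t)$ is sent to $-$ times itself, contributing the remaining signs). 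Because each $\omegaY_i$ is already an $\alphaY^{*}$-eigenform and $\betaY^{2}=\alphaY$, the diagonal form of $A_{\betaY}$ is forced and the only work is pinning down the three roots of unity; this step is routine once the substitutions are made carefully.

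For the rational representation $R_{\betaY}$, the approach is topological: I need to compute the action of $\betaY$ on the homology basis $\langle\FY,\alphaY\FY,\alphaY^{3}\FY,\alphaY^{4}\FY,\GY,\alphaY\GY\rangle$ from \autoref{lem:hombasis}. The key input is the half-sheet permutation $(1^{+},1^{-},2^{+},\dotsc,6^{+},6^{-})$ established in the discussion preceding the lemma, together with the explicit shrunk shapes of $\FY$ and $\GY$ in \autoref{cyclesC12}. I would first determine the images $\betaY(\FY)$ and $\betaY(\GY)$ as explicit cycles by tracking how the drawn arcs are permuted among the half-sheets, then decompose each image back into the chosen basis using the relations coming from $\alphaY^{3}\FY=-\FY-\alphaY\FY-\alphaY^{3}\FY-\alphaY^{4}\FY$ (the anti-invariance and the order-6 relations implicit in \autoref{lem:hombasis}). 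Writing these decompositions as columns produces $R_{\betaY}$.

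The main obstacle is the rational representation, not the analytic one: correctly reading the image cycles off the picture and expressing them in the non-obvious basis is the error-prone part, since a cycle like $\betaY(\GY)$ will generally be a combination involving \emph{both} the $\FY$-translates and the $\GY$-translates, and the signs depend delicately on orientation conventions (the \enquote{from the right in the direction of travel} rule fixed earlier). A good consistency check, which I would use to validate the computed $R_{\betaY}$, is that $R_{\betaY}$ must have order $12$, must satisfy $R_{\betaY}^{2}=R_{\alphaY}$ (the rational representation of $\alphaY$, which merely cyclically permutes the $\FY$- and $\GY$-translates), and must be compatible with $A_{\betaY}$ via the period matrix, i.e. the analytic and rational representations satisfy the standard intertwining relation through the period lattice. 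Finally, the eigenform statement is immediate: $A_{\betaY}$ is diagonal in the basis $\langle\omegaY_1,\omegaY_2,\omegaY_3\rangle$, so in particular $\betaY^{*}\omegaY_2=\zeta_{12}^{7}\omegaY_2$, exhibiting $\omegaY_2$ as an eigenform of $\betaY$ with eigenvalue a primitive $12$th root of unity, which is exactly what is needed to feed back into \autoref{thm:orbicrit}.
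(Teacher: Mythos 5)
Your plan follows the same route as the paper, which gives no more detail than the remark that, after establishing the half-sheet permutation $(1^{+},1^{-},2^{+},\dotsc,6^{+},6^{-})$ and the shrunk cycles of \autoref{cyclesC12}, ``one can easily calculate'' both representations. Your affine pullback computation of $A_{\betaY}$ is correct (e.g.\ $\betaY^{*}\omegaY_1=-\zeta_{12}^{-7}\omegaY_1=\zeta_{12}^{-1}\omegaY_1$, and the sign from $x(x-1)(x-\nicefrac12)\mapsto -x(x-1)(x-\nicefrac12)$ produces the entries $\zeta_{12}^{7}$ and $\zeta_{12}^{-2}$), and your proposed consistency checks on $R_{\betaY}$ (order $12$, $R_{\betaY}^2=R_{\alphaY}$, intertwining with $A_{\betaY}$ through the period matrix) are exactly the right way to validate the error-prone cycle-chasing.

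There is, however, one concrete mistake that would derail the decomposition step as written: the relation you invoke, $\alphaY^{3}\FY=-\FY-\alphaY\FY-\alphaY^{3}\FY-\alphaY^{4}\FY$, is self-referential ($\alphaY^{3}\FY$ appears on both sides) and is in any case the $C_4$ relation of the \Xfamily{} transplanted to the \Yfamily{}, where it does not hold. For $\Y_{t}$ the basis of \autoref{lem:hombasis} omits $\alphaY^{2}\FY$ and $\alphaY^{5}\FY$, and the relation needed to eliminate them is $\alphaY^{4}+\alphaY^{2}+1=0$ on $\H_{1}(\Y_{t},\Z)$: since the eigenvalues of $\alphaY^{*}$ on $\Omega(\Y_{t})$ are $\zeta_{6}^{-1},\zeta_{6},\zeta_{6}^{-2}$, the minimal polynomial of $\alphaY$ on homology is $\Phi_{6}(x)\Phi_{3}(x)=x^{4}+x^{2}+1$. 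Concretely, $\alphaY^{2}\FY=-\FY-\alphaY^{4}\FY$ and $\alphaY^{5}\FY=-\alphaY\FY-\alphaY^{3}\FY$, while for the $\GY$-translates one additionally has $\alphaY^{3}\GY=-\GY$ (anti-invariance under $\rhoY$), whence $\Phi_{6}(\alphaY)\GY=0$ and $\alphaY^{2}\GY=\alphaY\GY-\GY$. With these corrected relations the rest of your plan goes through and reproduces the stated matrices.
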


\subsection{Stable reduction of degenerate fibres}

While the \Yfamily{} is not compact in $\M_3$, it turns out that all fibres of its closure in $\overline{\M_3}$, the Deligne-Mumford compactification, admit compact Jacobians, i.e. that the Torelli image of $\overline{\Y}$ is contained in $\A_3$. Moreover, this analysis will be invaluable when constructing a fundamental domain for $\Y$ later.

\addsubsection{The degenerate fibres of \texorpdfstring{$\Y$}{Y}}

The degenerate fibres of the \Yfamily{} $\Y$ correspond to $t=0,1,\infty$. To describe them, we resort to the theory of \emph{admissible covers}. For a brief overview of the tools needed in this special case, see e.g. \cite[\S 4.1]{bouwmoeller} and the references therein.

The stable reduction when $t\to 1$ (equivalently, when $t\to 0$) yields the two components
    \begin{align*}
     \overline{\Y}_{1}^{1} &:y^{6}=x^{2}(x-1)^{5}\,,\ \mbox{ of genus 2,}\\
     \overline{\Y}_{1}^{2} &:y^{6}=x^{2}(x-1)^{3}\,,\ \mbox{ of genus 1.}
    \end{align*}

The stable reduction when $t\to\infty$ yields the three components
    \begin{align*}
     \overline{\Y}_{\infty}^{1} &:y^{6}=x^{2}(x-1)^{2}\,,\ \mbox{ consisting of two components of genus 1,}\\
     \overline{\Y}_{\infty}^{2} &:y^{6}=x^{3}(x-1)^{5}\,,\ \mbox{ of genus 1.}
    \end{align*}

A simple calculation gives the following lemma.

\begin{lemma}\label{lem:stablediff}The degeneration of the $\alphaY^{*}$-eigenforms of~\autoref{lem:forms} for $t\to 1$ is given by
    \[
     \omega_{1}^{1} = \frac{\d{x}}{y} \mbox{ on $\overline{\Y}_{1}^{1}$}\,,\qquad
     \omega_{2}^{1} = \frac{y\d{x}}{x(x-1)} \mbox{ on $\overline{\Y}_{1}^{2}$}\,,\qquad
     \omega_{3}^{1} = \frac{y^{4}\d{x}}{x^{2}(x-1)^{4}} \mbox{ on $\overline{\Y}_{1}^{1}$}\,,
    \]
and for $t\to\infty$ by
    \[
     \omega_{1}^{\infty} = \frac{\d{x}}{y} \mbox{ on $\overline{\Y}_{\infty}^{2}$}\,,\qquad
     \omega_{2}^{\infty} = \frac{y\d{x}}{x(x-1)} \mbox{ on $\overline{\Y}_{\infty}^{1}$}\,,\qquad
     \omega_{3}^{\infty} = \frac{\d{x}}{y^{2}} \mbox{ on $\overline{\Y}_{\infty}^{1}$}\,, \\
    \]
where the differentials are identically zero on the components where they are not defined.
\end{lemma}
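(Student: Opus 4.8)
The strategy is to compute, for each of the two degenerations, the leading term of every eigenform of \autoref{lem:forms} on each component of the stable limit, after a ramified base change resolving the sixth roots. Since the components of $\overline{\Y}_{1}$ and $\overline{\Y}_{\infty}$ have already been identified via admissible covers, it only remains to introduce the rescaled coordinates that separate the colliding branch points and to read off the power of the degeneration parameter carried by each form; a form survives precisely on the component where, after normalising by this power, its limit is a non-zero holomorphic section of the dualising sheaf, and it degenerates to zero elsewhere.

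For $t\to 1$ write $t=1+s$. Where $x$ stays away from the pair $\{1,t\}$ one has $x-t\sim x-1$ and the equation becomes $y^{6}\sim x^{2}(x-1)^{5}$, giving the genus-two component $\overline{\Y}_{1}^{1}$; the colliding points are resolved by $x-1=sw$, under which $\mathrm{d}x=s\,\mathrm{d}w$ and $y=s^{5/6}\tilde{y}$ with $\tilde{y}^{6}=w^{2}(w-1)^{3}$, i.e.\ the genus-one bubble $\overline{\Y}_{1}^{2}$. Substituting and counting powers of $s$, the forms $\mathrm{d}x/y$ and $y^{4}\mathrm{d}x/(x^{2}(x-1)^{2}(x-t)^{2})$ are of order $s^{0}$ on $\overline{\Y}_{1}^{1}$ but of order $s^{1/6}$ and $s^{1/3}$ on the bubble, while $y\,\mathrm{d}x/(x(x-1)(x-t))$ is of order $s^{-1/6}$ on the bubble, dominating its contribution on $\overline{\Y}_{1}^{1}$. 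Normalising each form by the power that makes its dominant term finite and non-zero, the survivors are $\omega_{1}^{1}$ and $\omega_{3}^{1}$ on $\overline{\Y}_{1}^{1}$ and $\omega_{2}^{1}$ on $\overline{\Y}_{1}^{2}$, each vanishing on the other component; a check in the uniformiser at the node (where $x-1=z^{6}$ and $y\sim z^{5}$) confirms holomorphy.

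For $t\to\infty$ there are two levels. Keeping $x$ finite gives $y^{6}\sim -t^{3}x^{2}(x-1)^{2}$, so $y=t^{1/2}\tilde{y}$ produces the finite level $\overline{\Y}_{\infty}^{1}$; rescaling $x=tu$ near infinity gives, via $y=t^{7/6}\hat{y}$, the curve $\hat{y}^{6}=u^{4}(u-1)^{3}$, which is isomorphic to $\overline{\Y}_{\infty}^{2}$. The same power-counting shows that $\mathrm{d}x/y$ dominates only on $\overline{\Y}_{\infty}^{2}$ (after rescaling by $t^{1/6}$), that $y\,\mathrm{d}x/(x(x-1)(x-t))$ dominates only on $\overline{\Y}_{\infty}^{1}$ (after rescaling by $t^{1/2}$), and that $y^{4}\mathrm{d}x/(x^{2}(x-1)^{2}(x-t)^{2})$ is already of order $t^{0}$ on $\overline{\Y}_{\infty}^{1}$, where the relation $\tilde{y}^{6}=(\text{unit})\,x^{2}(x-1)^{2}$ rewrites it as $\mathrm{d}x/y^{2}$; all three vanish on the remaining components. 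This yields $\omega_{1}^{\infty}$, $\omega_{2}^{\infty}$ and $\omega_{3}^{\infty}$ as stated, the distribution of $\omega_{2}^{\infty}$ and $\omega_{3}^{\infty}$ across the two genus-one pieces of $\overline{\Y}_{\infty}^{1}$ being governed by the Prym involution $\rhoY$.

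The only delicate point is the bookkeeping of the fractional powers of the parameter arising from extracting sixth roots of the defining equation, which forces the ramified base change making the total space semistable, together with the sixth-root-of-unity factors appearing when matching the rescaled equations to the normalised models $\overline{\Y}_{1}^{i}$ and $\overline{\Y}_{\infty}^{i}$; since these affect each form only by a non-zero constant, they do not alter the conclusion. Everything else is a routine comparison of leading terms, the combinatorics of the admissible cover having already been fixed.
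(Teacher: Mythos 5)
Your computation is correct and is precisely the ``simple calculation'' that the paper omits: rescaling coordinates near the colliding branch points ($x-1=sw$ for $t\to1$, $x=tu$ for $t\to\infty$), extracting the fractional powers of the parameter carried by $y$, and comparing the leading orders of each eigenform on each component reproduces exactly the stated limits, and I have verified your exponents ($s^{0},s^{1/6},s^{-1/6},s^{1/3}$ and $t^{-1/6},t^{-1/2},t^{-5/6},t^{0},t^{-1/3}$) as well as the identity $\tilde y^{4}/(x^{2}(x-1)^{2})=-\tilde y^{-2}$ turning $\omega_{3}$ into $\d{x}/y^{2}$ on $\overline{\Y}_{\infty}^{1}$. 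The only point left implicit is that $\hat y^{6}=u^{4}(u-1)^{3}$ is identified with the model $y^{6}=x^{3}(x-1)^{5}$ of $\overline{\Y}_{\infty}^{2}$ only up to a Möbius change of the branch points, but since the space of holomorphic forms there is one-dimensional this affects $\omega_{1}^{\infty}$ only by a non-zero scalar, which is immaterial.
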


\begin{figure}[!htp]
\centering
\includegraphics[width=\textwidth]{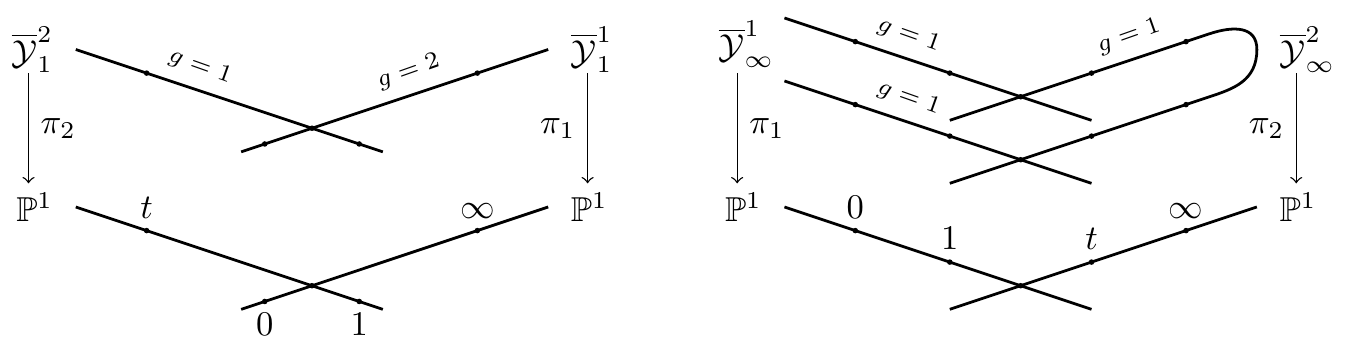}
\caption{The stable fibres $\overline{\Y}_{1}$ and $\overline{\Y}_{\infty}$.}
\label{StableReduction}
\end{figure}

Via the shrinking process introduced above, one can compute the degeneration of the cycles in both cases (see~\autoref{StableReductionCycles}). In the following lemma, we sum up some results about the homology of the degenerate fibres that we will need in \autoref{sec:Prym}.

\begin{lemma}\label{lem:stablecycles}Let $\dF{\infty},\dG{\infty}$ and $\dF{1},\dG{1}$ denote the cycles on $\overline{\Y}_{\infty}$ and $\overline{\Y}_{1}$ corresponding to the degeneration of $\FY$ and $\GY$.
\begin{enumerate}
    \item $\dF{\infty}$ and $\dG{\infty}$ live in $\overline{\Y}_{\infty}^{1}$ and $\overline{\Y}_{\infty}^{2}$ respectively.
    \item There is a decomposition of cycles $\dF{1}=\dF{1}_{1}+\dF{1}_{2}$ and $\dG{1}=\dG{1}_{1}+\dG{1}_{2}$, where $\dF{1}_{k},\dG{1}_{k}$ are cycles in the component $\overline{\Y}_{1}^{k}$ going through the nodal point. 

        Moreover, one has the following intersection matrices for the sets of cycles $\{\dF{1}_{k},\alphaY \dF{1}_{k},\alphaY^{3}\dF{1}_{k}, \alphaY^{4}\dF{1}_{k},\dG{1}_{k},\alphaY \dG{1}_{k}\}$, for $k=1,2$:
        \[ \begin{pmatrix}
            0&1&0&0&1&0\\
            -1&0&0&0&1&1\\
            0&0&0&1&-1&0\\
            0&0&-1&0&-1&-1\\
            -1&-1&1&1&0&2\\
            0&-1&0&1&-2&0\\
           \end{pmatrix}\mbox{ and }
           \begin{pmatrix}
            0&-1&0&1&-1&0\\
            1&0&-1&0&-1&-1\\
            0&1&0&-1&1&0\\
            -1&0&1&0&1&1\\
            1&1&-1&-1&0&-1\\
            0&1&0&-1&1&0\\
           \end{pmatrix}\,,\]
        respectively.
\end{enumerate}
\end{lemma}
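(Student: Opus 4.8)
The plan is to read off both statements from the \emph{shrinking process} already used for $\Y_{\nicefrac{1}{2}}$ in \autoref{cyclesC12}, now applied to the two stable degenerations and illustrated in \autoref{StableReductionCycles}. The guiding principle is elementary: a lift of a loop in $\pi_{1}(\P^{*}_{t})$ degenerates to a cycle supported on a single component of the stable fibre exactly when all the branch points it encircles remain on that component, and it is forced through a node precisely when those branch points become separated by the node. Once the supports are pinned down, the intersection numbers are computed by the sheet-crossing recipe of \autoref{lem:hombasis}, the only genuinely new feature being the contributions coming from the node.

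For~(1), recall that the collision $t\to\infty$ merges the branch points $t$ and $\infty$, which together form the component $\overline{\Y}_{\infty}^{2}$, whereas $0$, $1$ and the node remain on $\overline{\Y}_{\infty}^{1}$. Since $\FY$ is the lift of $\sigma_{1}^{-1}\sigma_{0}$ it encircles only $0$ and $1$, so $\dF{\infty}$ is supported on $\overline{\Y}_{\infty}^{1}$ and avoids the nodes entirely; dually, $\GY$ is the lift of $\sigma_{\infty}^{-3}\sigma_{t}$ and encircles only $t$ and $\infty$, so $\dG{\infty}$ lives on $\overline{\Y}_{\infty}^{2}$. It is immaterial in which of the two genus-$1$ pieces of $\overline{\Y}_{\infty}^{1}$ the cycle $\dF{\infty}$ sits, as the statement only records the component $\overline{\Y}_{\infty}^{1}$.

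For~(2), the collision $t\to 1$ instead merges $1$ and $t$, which migrate onto the bubble $\overline{\Y}_{1}^{2}$, while the original $0$ and $\infty$ together with the unique node make up $\overline{\Y}_{1}^{1}$. Thus $\FY$, encircling $0\in\overline{\Y}_{1}^{1}$ and $1\in\overline{\Y}_{1}^{2}$, is pulled apart at the node into $\dF{1}=\dF{1}_{1}+\dF{1}_{2}$, with $\dF{1}_{k}$ supported on $\overline{\Y}_{1}^{k}$ and passing through the node, and likewise $\GY$, encircling $t\in\overline{\Y}_{1}^{2}$ and $\infty\in\overline{\Y}_{1}^{1}$, splits as $\dG{1}=\dG{1}_{1}+\dG{1}_{2}$. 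Fixing $k$, I would then compute the two $6\times 6$ intersection matrices for $\{\dF{1}_{k},\alphaY\dF{1}_{k},\alphaY^{3}\dF{1}_{k},\alphaY^{4}\dF{1}_{k},\dG{1}_{k},\alphaY\dG{1}_{k}\}$ on the single component $\overline{\Y}_{1}^{k}$, reading the $\alphaY$-translates off the monodromy and the rotation numbers at the node off the local parametrisation \autoref{cyclicparametrisation}.

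The genuinely delicate step is this last computation. Unlike the smooth case of \autoref{lem:hombasis}, every basis cycle now runs through the single node, so each pair may intersect there; pinning down these node contributions --- and in particular the off-diagonal entries $\pm 2$ --- requires correctly fixing the cyclic order in which the branches of the various $\alphaY^{j}\dF{1}_{k}$ and $\alphaY^{j}\dG{1}_{k}$ leave the node, together with the orientations inherited from $\FY$ and $\GY$. I expect the intersections away from the node, and the verification of the supports in~(1), to be routine, so that essentially all the care is concentrated in the node bookkeeping.
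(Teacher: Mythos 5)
Your proposal follows the paper's proof essentially verbatim: the supports in (1) and the splitting at the node in (2) are read off from which branch points each of $F$ and $G$ encircles and onto which component those points migrate, and the intersection matrices are then computed component by component via the slit/sheet picture, with the delicate bookkeeping concentrated at the node --- exactly the route the paper takes (it slits $\P^1$ along an interval through the image $q$ of the node and tracks the sheet changes of $\dF{1}_k,\dG{1}_k$ around the preimages of $t$, $q$ and $1$). Like the paper, you defer the final ``tedious but straightforward'' evaluation of the $6\times 6$ matrices, but you correctly identify where all the care must go, so the argument is sound and not materially different.
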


\begin{figure}[!htb]
\centering
\includegraphics{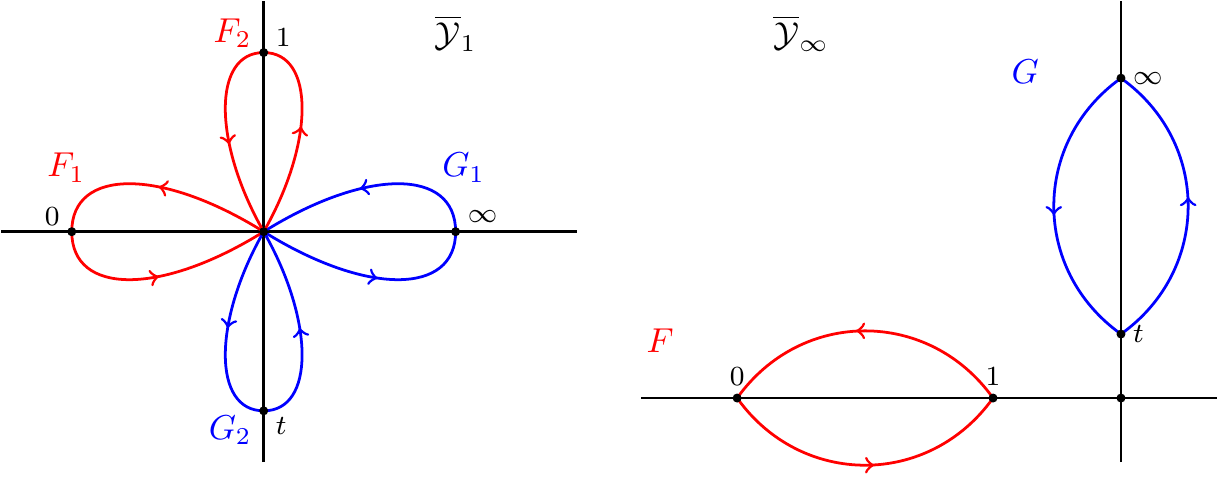}
\caption{The bases of homology in $\overline{\Y}_{1}$ and $\overline{\Y}_{\infty}$ as lifts of cycles in $\P^{1}$ by $\pi_{1}$ and $\pi_{2}$.}
\label{StableReductionCycles}
\end{figure}

\begin{proof}In the case of $\overline{\Y}_{\infty}$, it is obvious from~\autoref{CyclesC6} that the degeneration of the cycles $\FY$ and $\GY$ lie in $\overline{\Y}_{\infty}^{1}$ and $\overline{\Y}_{\infty}^{2}$ respectively.

The case $\overline{\Y}_{1}$ is more delicate. It follows again from~\autoref{CyclesC6} that the degeneration of both $\FY$ and $\GY$ are the union of cycles in $\overline{\Y}_{1}^{1}$ and $\overline{\Y}_{1}^{2}$ meeting at the nodal point. In fact, since the points in $\overline{\Y}_{1}$ corresponding to the preimages of 0 and 1 (respectively $t$ and $\infty$) lie in different components, it is clear that $\dF{1}$ (respectively $\dG{1}$) will decompose as the sum $\dF{1}_{1}+\dF{1}_{2}$ (respectively $\dG{1}_{1}+\dG{1}_{2}$) of cycles in $\overline{\Y}_{1}^{1}$ and $\overline{\Y}_{1}^{2}$.

Consider first the component $\overline{\Y}_{1}^{2}$, isomorphic to $y^{6}=x^{2}(x-1)^{3}$. Note that the preimages of $0$ and $1$ under $\pi_{2}$ correspond to the preimages of 1 and $t$ in the general member of our family $\Y_{t}$. Let us denote by $Q\in\overline{\Y}_{1}^{2}$ the nodal point and suppose, for simplicity, that its image $q\in\P^{1}$ under $\pi_{2}$ lies in the interval $[1,0]$. Removing this interval and proceeding as before we get the picture in~\autoref{degeneratecycles}, where the sheet changes follow from studying the behaviour of $\FY$ and $\GY$ around the preimages of 1 and $t$ in the general member of our family (see~\autoref{CyclesShrunkC6}).

\begin{figure}[!htp]
\centering
\includegraphics{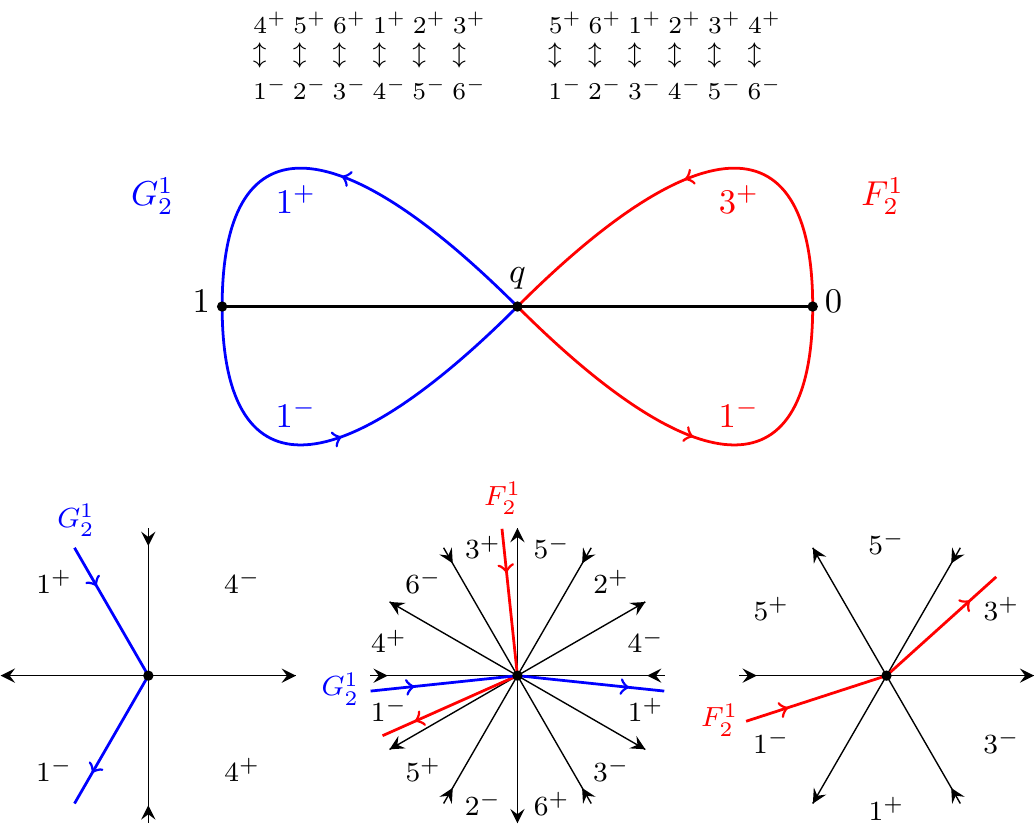}
\caption{Degenerate cycles on $\overline{\Y}_{1}^{2}$ and their behaviour around the preimages of $t$, $q$ and 1, respectively. Note that $\dF{1}_{2}\cdot \dG{1}_{2}=-1$.}
\label{degeneratecycles}
\end{figure}

One can get a similar picture for the other component $\overline{\Y}_{1}^{1}$. Now a tedious but straightforward calculation yields the intersection matrices.
\end{proof}

%%%%%%%%%%%%%%%%%%%%%%%%%%%%

\section{The Prym-Torelli images}\label{sec:Prym}

To understand the orbifold points of $\W$, by~\autoref{thm:orbicrit}, we must determine which $\X_t$ and $\Y_t$ admit real multiplication that satisfies the eigenform condition. Therefore, the aim of this section is to concretely calculate the period matrices of the families of Prym varieties $\PX$ of the \Xfamily{} and $\PY$ of the \Yfamily.

\subsection{The Prym variety \texorpdfstring{$\PX$}{P(X\_t)}}\label{subsec:PrymC4}

\begingroup
\def\alphaX{\alpha}

In the case of the \Xfamily{} $\X$, all the fibres $\X_{t}$ are sent to the same Prym variety by the Prym-Torelli map.

\begin{prop}\label{prop:prymc4}
For all $t\in\P^*$,  the Prym variety $\PX$ is isomorphic to $\C^{2}/\Lambda$, where $\Lambda=\PrymX\cdot\Z^{4}$ for
    \[\PrymX= \begin{pmatrix}
    -\frac{1+\i}{2} & 1 & 1 & 0 \\
    1 & -(1+\i) & 0 & 2 \\
    \end{pmatrix}\,,\]
together with the polarisation induced by the intersection matrix
    \[\EX=\begin{pmatrix}
    0&0&1&0\\
    0&0&0&2\\
    -1&0&0&0\\
    0&-2&0&0\\
    \end{pmatrix}.\]

In particular, the image of the \Xfamily{} $\X$ under the Prym-Torelli map is a single point.
\end{prop}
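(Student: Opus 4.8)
The plan is to compute the period matrix of $\PX$ explicitly using the homology basis from \autoref{lem:hombasis} and the differential forms from \autoref{lem:forms}, and to verify that the resulting matrix is independent of the parameter $t$. The key input is that the Prym variety $\PX = (\Omega(\X_t)^-)^\vee/\H_1^-(\X_t,\Z)$ is determined by integrating the anti-invariant eigenforms $\omegaX_1 = \d{x}/y^3$ and $\omegaX_2 = x\,\d{x}/y^3$ over the anti-invariant cycles. First I would fix the symplectic basis of $\H_1^-(\X_t,\Z)$ provided in \autoref{lem:hombasis}, namely the four cycles spanning the $(1,2)$-polarised sublattice, whose intersection form is already known to be $\EX$ after the change of basis.

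The central computation is evaluating the period integrals $\int_{\gamma} \omegaX_j$ for $\gamma$ ranging over $\FX, \alphaX\FX, \alphaX^2\FX, \GX, \alphaX\GX, \alphaX^2\GX$. Since $\alphaX^*\omegaX_j = \zeta_4^{-3}\omegaX_j = \i\,\omegaX_j$ (both $\omegaX_1,\omegaX_2$ lie in the same $\alphaX^*$-eigenspace by \autoref{lem:forms}), the period over $\alphaX^k\gamma$ equals $\i^{-k}$ times the period over $\gamma$; this reduces all integrals to the two basic periods of each form over $\FX$ and over $\GX$. These in turn are expressible, via the local parametrisations in \autoref{cyclicparametrisation}, as classical beta-type integrals $\int_0^1 x^{p/4}(1-x)^{q/4}(x-t)^{r/4}\,\d{x}$, which generically depend on $t$. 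The crucial observation making the Prym-Torelli image constant is that, after assembling the $2\times 6$ period matrix and projecting to the chosen basis of $\H_1^-(\X_t,\Z)$, the $t$-dependent transcendental factors appear as a common scaling of the lattice that is absorbed by the $\C^2$-identification: the normalised period matrix $\PrymX$ retains only the ratios of periods fixed by the cyclic action and the intersection data.

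The main obstacle will be handling this normalisation rigorously and confirming that the $t$-dependence genuinely cancels rather than merely being hidden by a choice of basis. Concretely, I expect the $\GX$-periods to be $t$-dependent multiples of the $\FX$-periods in a way dictated by the monodromy, and I would show that the change-of-basis matrix (from $\{\FX,\dots,\alphaX^2\GX\}$ to the four anti-invariant generators) combined with the normalisation $\C^2 \cong \Omega(\X_t)^-{}^\vee$ eliminates $t$ entirely. An alternative, possibly cleaner route is to argue more structurally: the endomorphism $\alphaX$ induces complex multiplication by $\Z[\i]$ on the two-dimensional $\PX$, so $\PX$ is a $\Z[\i]$-abelian surface; the $(1,2)$-polarised lattice with this $\Z[\i]$-action and intersection form $\EX$ is rigid (admits no moduli), forcing the period matrix to be constant. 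I would verify directly that $\PrymX$ as written has $\alphaX$ acting by $\i$ and carries the stated polarisation, then invoke rigidity to conclude that every fibre maps to this same point, and finally identify the ambient moduli space as $\A_{2,(1,2)}$, the space of $(1,2)$-polarised abelian surfaces.
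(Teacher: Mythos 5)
Your structural idea is the right one, but both routes as written have a gap, and the paper proceeds differently. The concrete error is in your first route: the $t$-dependence of the periods does \emph{not} appear as a common scaling of the lattice. Writing $f_j=\int_{\FX}\omegaX_1{}_{,j}\dots$ — more precisely $f_j=\int_{\FX}\omegaX_j$ and $g_j=\int_{\GX}\omegaX_j$ — the four numbers $f_1,f_2,g_1,g_2$ are not proportional, and the normalising matrix $Q_t$ of \autoref{eq:Qt} is a genuinely $t$-dependent element of $\mathrm{GL}_2(\C)$, not a scalar. What actually forces constancy is the observation underlying your second route, which you should make explicit rather than invoking rigidity as a black box: $\alphaX^*$ acts on $\Omega(\X_t)^-$ as the \emph{scalar} $\i$, so $\int_{\alphaX^k\FX}\omegaX_j=\i^k f_j$ and likewise for $\GX$; hence every column of the $2\times 4$ period matrix on $\H_1^-(\X_t,\Z)$ is a \emph{universal}, $t$-independent, $\C$-linear combination of the two vectors $(f_1,f_2)^T$ and $(g_1,g_2)^T$. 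Since these are $\C$-independent (the lattice is cocompact), the $\mathrm{GL}_2(\C)$ change of basis sending them to $e_1,e_2$ yields a constant matrix. Without this signature-$(2,0)$ input, the rigidity you assert for $(1,2)$-polarised $\Z[\i]$-abelian surfaces is simply not justified; with it, rigidity is no longer needed.

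The second gap: even granting rigidity, a zero-dimensional moduli space only shows the Prym-Torelli image is locally constant, hence a single point; it does not identify that point with the explicit $\C^2/\Lambda$ of the statement. You still need either a period computation at one fibre or a uniqueness argument for polarised $\Z[\i]$-lattices of this type, plus verification that $(\Lambda,\EX)$ satisfies the Riemann relations and that the rational representation of $\alphaX$ on $\H_1^-(\X_t,\Z)$ (computed from \autoref{lem:hombasis}) matches the $\Z[\i]$-structure of $\Lambda$. For comparison, the paper's proof is more computational: it exploits the two extra involutions $\gamma,\delta$ of $\X_t$, which generate a Klein four-group and, together with $\alphaX$, all of $\End_\Q\PX\cong M_2(\Q[\i])$, to derive explicit linear relations among $f_1,f_2,g_1,g_2$, and then exhibits $Q_t$ explicitly. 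That extra work is not optional in the larger scheme: the explicit $Q_t$ is used in the proof of \autoref{thm:orbpt2and4} to locate the four differentials with $4$-fold zeros inside the fixed coordinates of $\PrymX$, which your more abstract argument would not provide.
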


Calculating Jacobians of curves with automorphisms can be done by a method attributed to Bolza, see~\cite[Chap. 11.7]{birkenhakelange} for details. The idea is to determine, for a given automorphism $\sigma$ and fixed choices of basis, the analytic and rational representations $A_\sigma$ and $R_\sigma$ of the automorphisms and use this information to find relations in the period matrix~$\Pi$, using the identity
\[\begin{pmatrix}A_\sigma&0\\0&\overline{A_\sigma}\end{pmatrix}\begin{pmatrix}\Pi\\\overline{\Pi}\end{pmatrix}=\begin{pmatrix}\Pi\\\overline{\Pi}\end{pmatrix}R_\sigma.\]

The group of automorphisms of a general member of the \Xfamily{} $\X$ is generated by $\alphaX\coloneqq\alpha^{\X}$ and the involutions
    \[\gamma\colon(x,y)\mapsto\left(\frac{t}{x},\frac{y\sqrt{t}}{x}\right)\quad\mbox{and}\quad
    \delta\colon(x,y)\mapsto\left(\frac{t(x-1)}{x-t},\frac{-y\sqrt{t(t-1)}}{x-t}\right)\,.\]

Note that $\gamma$ and $\delta$ are lifts by $\pi$ of the automorphisms of $\P^{1}$ given by $z\mapsto \frac{t}{z}$ and $z\mapsto \frac{tz-t}{z-t}$, respectively. In particular, these two involutions generate a Klein four-group acting on the fixed points of~$\rho^\X$.

By~\autoref{lem:forms}, the action of $\alphaX^{*}$ on $\Omega(\X_{t})$ is given by
    \[\begin{pmatrix} \i & 0 & 0 \\ 0 & \i & 0 \\ 0 & 0 & -1\\\end{pmatrix}\]
in the eigenform basis. The automorphisms $\gamma$ and $\delta$ induce analytic representations
    \[\gamma^{*}=\begin{pmatrix} 0 & -\sqrt{t} & 0 \\ -\dfrac{1}{\sqrt{t}} & 0 & 0 \\ 0 & 0 & -1 \end{pmatrix}\,\quad\text{and} \quad
    \delta^{*}=\begin{pmatrix} \dfrac{-t}{\sqrt{t(t-1)}} & \dfrac{-t}{\sqrt{t(t-1)}} & 0 \\ \dfrac{1}{\sqrt{t(t-1)}} & \dfrac{t}{\sqrt{t(t-1)}} & 0 \\ 0 & 0 & -1 \end{pmatrix}\,.
    \]

To calculate the rational representation, let us suppose again $t\in\R$, $t>1$. Keeping track of the action of $\gamma$ and $\delta$ on the branching points of $\pi$ and on the half-sheets of the cover, one can write down the action of these automorphisms in the homology%\q{do we want to mention $P_F$ and $P_G$ here?}
    \begin{align*}
     \gamma\FX&=-\alphaX^{2}\FX+\GX+\alphaX\GX\,,%\quad
     &\gamma\GX&=-\GX\,, \\
     \delta\FX&=-\FX\,,%\quad
     &\delta\GX&=-\alphaX\FX-\alphaX^{2}\FX-\alphaX^{2}\GX\,.
    \end{align*}

\begin{rem}\label{quotientEi}
Observe that $\gamma$ and $\delta$ act as involutions and the quotient is $\X_t/\gamma\cong\X_t/\delta\cong E_\i$, where $E_\i$ is the unique elliptic curve with an order four automorphism. Indeed, $\X_t$ is not hyperelliptic and $\delta$ and $\gamma$ have fixed points (e.g. preimages of $\sqrt{t}$ and $t-\sqrt{t(t-1)}$ on $\X_t$), therefore the quotient has genus $1$. Moreover, $\alpha$ commutes with both $\delta$ and $\gamma$, hence descends to an order four automorphism of the quotient elliptic curve.
\end{rem}

\begin{proof}[Proof of~\autoref{prop:prymc4}]
To calculate the Jacobian $\Jac(\X)$ write $f_{i}\coloneqq f^\X_{i}(t)=\int_{\FX}\omegaX_{i}$ and $g_{i}\coloneqq g_{i}(t)=\int_{\GX}\omegaX_{i}$. From the action of $\alphaX$ one can deduce that the Jacobian of $\X_{t}$ in the bases of Lemmas~\ref{lem:forms} and~\ref{lem:hombasis} is given by the period matrix
    \[\PiX_{t} = \begin{pmatrix}
    f_{1} & \i f_{1} & - f_{1} &  g_{1} & \i g_{1} & - g_{1} \\
    f_{2} & \i f_{2} & - f_{2} &  g_{2} & \i g_{2} & - g_{2} \\
    f_{3} & - f_{3} &  f_{3} &  g_{3} & - g_{3} &  g_{3} \\
    \end{pmatrix}.\]

Using the actions of $\gamma$ and $\delta$ both on $\Omega(\X_{t})$ and $\H_1(\X_t,\Z)$ one gets the relations
    \[f_{1} = -\sqrt{t} f_{2}- g_{1}(1+\i)\,,\quad
    g_{2} =  \frac{g_{1}}{\sqrt{t}}\,,\quad
    g_{1} = \frac{-f_{2}\sqrt{t}(1-\sqrt{t}+\sqrt{t-1})}{(1+\i)(\sqrt{t-1}-\sqrt{t})}\,.\]

By changing to the basis of $\H_1^-(\X_t,\Z) \oplus \H_1^+(\X_t,\Z)$ given in~\autoref{lem:hombasis} one gets
\[\begin{pmatrix}
(1+ \i) (f_{1}+g_{1}) & -2 g_{1} & -2 g_{1}+(\i-1) f_{1} & 2 \i f_{1} & 0 & 0 \\
(1+ \i) (f_{2}+g_{2}) & -2 g_{2} & -2 g_{2}+(\i-1) f_{2} & 2 \i f_{2} & 0 & 0 \\
0 & 0 & 0 & 0 & 2 f_{3} & 2 g_{3} \\
\end{pmatrix}\]
and sees that the Jacobian $\Jac(\X_t)$ is isogenous to the product $\PX\times\Jac(\X_t/\rhoX)$, where $\PX$ is $(1,2)$-polarised and $\Jac(\X_t/\rhoX)$ is $(2)$-polarised. Note that the polarisation on $\PX$ is given by the principal $4\times 4$ minor in the intersection matrix in the proof of~\autoref{lem:hombasis}, which agrees with $\EX$.

Finally, we can change the basis of $\Omega(\X_{t})^{-}$ by the matrix
    \begin{equation}\label{eq:Qt}
    Q_{t} = \dfrac{1}{\sqrt{t-1} f_{2}}\begin{pmatrix}
    -\dfrac{(1+\i)(\sqrt{t}-\sqrt{t-1})}{4\sqrt{t}} & -\dfrac{1+i}{4} \\
    \dfrac{\i}{2\sqrt{t}} & \dfrac{i(\sqrt{t}-\sqrt{t-1})}{2}
    \end{pmatrix},
    \end{equation}
to get the period matrix
    \[\begin{pmatrix}
    \PrymX & 0 \\
    0 & \EllX[t]
    \end{pmatrix}\,\mbox{ where }
    \PrymX:= \begin{pmatrix}
    -\frac{1+\i}{2} & 1 & 1 & 0 \\
    1 & -(1+\i) & 0 & 2 \\
    \end{pmatrix}\ \mbox{and}\
    \EllX[t]:=\begin{pmatrix}
    2  f_{3} &  2  g_{3} \\
    \end{pmatrix}.\]

Note that $\PrymX$ no longer depends on $t$, proving the final statement.
\end{proof}

\begin{rem}These results are equivalent to those of Gu\`{a}rdia in~\cite{guardia}. However, we cannot simply apply his results for two reasons. First, we are not restricted to real branching values and in particular the curve $\X_{\zeta_{6}}$ plays a special role. More importantly, in order to study the points of intersection with the Prym-Teichmüller curves $\W$, we need to keep track of the differential forms with a 4-fold zero in each fibre of the family. As a consequence, we need an explicit expression of the elements of $\Omega(\X_{t})^{-}(4)$, i.e. the $\rho$-anti-invariant differential forms with a 4-fold zero, in the basis in which the period matrix $\PrymX$ above is written.
\end{rem}

%%%%%%%%%%%%%%%%%%%%%%%%%%
\addsubsection{The endomorphism ring \texorpdfstring{$\End\PX$}{End(P(X\_t))}}

To see when $\PX$ has real multiplication by a given order, we need a good understanding of the endomorphism ring. First, however, we describe the endomorphism algebra.

\begin{prop}\label{EndPX}
The endomorphism algebra $\End_{\Q}\PX$ is the algebra isomorphic to $M_{2}(\Q[\i])$ generated by the identity and the automorphisms $\alphaX$, $\gamma$, $\delta$ and $\gamma\delta$. 
\end{prop}

\begin{proof}Note that the automorphisms $\alphaX$, $\gamma$ and $\delta$ of $\X_{t}$ preserve the spaces $\Omega(\X_{t})^{-}$ and $\H_{1}^{-}(\X_{t},\Z)$, so they induce automorphisms of the Prym variety. One can construct their analytic and rational representations in the bases of Lemmas~\ref{lem:forms} and~\ref{lem:hombasis} to obtain
    \[\begin{array}{ll}
    A_{\alphaX}=\left(\begin{matrix}
    \i & 0 \\
    0 & \i
    \end{matrix}\right)\,,
    &
    R_{\alphaX}=\left(\begin{matrix}
    1 & -2 & -2 & 0 \\
    -1 & 1 & 0 & -2 \\
    2 & -2 & -1 & 2 \\
    -1 & 2 & 1 & -1
    \end{matrix}\right)\,; \\
    A_{\gamma}=\left(\begin{matrix}
    0 & \frac{1-\i}{2} \\
    1+\i & 0
    \end{matrix}\right)\,,\quad
    &
    R_{\gamma}=\left(\begin{matrix}
    1 & 0 & 0 & 2 \\
    1 & -1 & -1 & 0 \\
    0 & 0 & 1 & 2 \\
    0 & 0 & 0 & -1
    \end{matrix}\right)\,; \\
    A_{\delta}=\left(\begin{matrix}
    1 & 0 \\
    0 & -1
    \end{matrix}\right)\,,
    &
    R_{\delta}=\left(\begin{matrix}
    1 & 0 & 0 & 0 \\
    0 & -1 & 0 & 0 \\
    0 & 2 & 1 & 0 \\
    -1 & 0 & 0 & -1
    \end{matrix}\right)\,.
    \end{array}
    \]

Since $A_{\alphaX}$ lies in the centre of $M_{2}(\C)$ and the involutions $\gamma$ and $\delta$ anti-commute, the endomorphism algebra $\End_{\Q}\PX$ must contain the (definite) quaternion algebra $F=\langle A_{\alphaX}, A_{\gamma}, A_{\delta}\rangle_{\Q}\cong M_{2}(\Q[\i])$. It is easy to see that this already has to be the entire algebra $\End_{\Q}\PX$ (see~\cite[Prop. 13.4.1]{birkenhakelange}). In particular, any element of $\End_{\Q}\PX$ can be written as a $\Q[\i]$-linear combination of $\mathrm{Id}$, $A_{\gamma}$, $A_{\delta}$ and~$A_{\gamma\delta}$.
\end{proof}

Recall that, for any polarised abelian variety, the Rosati inovolution $\cdot'$ on the endomorphism ring is induced by the polarisation.
Therefore, given an element $\varphi\in\End_{\Q}\PX$ with rational representation $R_{\varphi}$, its image $\varphi'$ under the Rosati involution has rational representation $E^{-1}R_{\varphi}^{T}E$, where $E=E^\X$ is the polarisation matrix from above. It is then easy to check that $\alphaX'=-\alphaX$, $\gamma'=\gamma$, $\delta'=\delta$ and $(\gamma\delta)'=-\gamma\delta$. Under the embedding $F\hookrightarrow M_{2}(\C)$ given by the analytic representation, the Rosati involution is the restriction of the involution
    \begin{equation}\label{eq:C4involutionM2C}
    \begin{array}{ll}
    \begin{array}{lll}
    M_{2}(\C) & \to & M_{2}(\C) \\ B & \mapsto & A^{-1} B^{H}A
    \end{array} & ,\quad \mbox{for }A=\begin{pmatrix}2&0\\0&1\\\end{pmatrix}
    \end{array}
    \end{equation}
where $B^{H}$ denotes the hermitian transpose.

This gives us a simple criterion to check whether a specific rational endomorphism actually lies in $\End\PX$.

\endgroup

%%%%%%%

\subsection{The Prym variety \texorpdfstring{$\PY$}{P(Y\_t)}}\label{subsec:PrymC6}

\begingroup
\def\alphaY{\alpha}

In the case of the \Yfamily{} $\Y$, we have the following characterisation.

\begin{prop}\label{prop:prymc6}
For all $t\in\P^*$, the Prym variety $\PY=\C^{2}/\Lambda_{t}$, where $\Lambda_{t}=\PrymY[t]\cdot\Z^{4}$ for
    \[\PrymY[t]=\begin{pmatrix}
    2f  & 2\zeta_{6}^{2}f  & 1 & \zeta_{6}^{-1} \\
    2 & 2\zeta_{6}^{-2} & 2f  & 2\zeta_{6}f  \\
    \end{pmatrix}\,,\]
where $f\coloneqq f(t)=\int_{\FY}\omegaY_{1}$ is the period map, together with the polarisation induced by the intersection matrix
    \[\EY=\begin{pmatrix}
    0 & 2 & 0 & 0 \\
    -2 & 0 & 0 & 0 \\
    0 & 0 & 0 & 1 \\
    0 & 0 & -1 & 0 \\
    \end{pmatrix}.\]
\end{prop}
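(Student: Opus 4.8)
The plan is to run the same Bolza-style computation used for \autoref{prop:prymc4}, but with one essential modification forced by the absence of extra automorphisms. Writing $f_i\coloneqq\int_{\FY}\omegaY_i$ and $g_i\coloneqq\int_{\GY}\omegaY_i$ for $i=1,2$, the first task is to express the period matrix of $\PY$ in terms of these four periods. By \autoref{lem:forms} the Prym forms $\omegaY_1,\omegaY_2$ are $\alphaY^{*}$-eigenforms, and a direct computation from the defining equations gives eigenvalues $\zeta_6^{-1}$ and $\zeta_6$ respectively (consistent with \autoref{lem:actionbeta} via $\betaY^{2}=\alphaY$). Hence the period over $\alphaY^{k}\FY$ or $\alphaY^{k}\GY$ is obtained from that over $\FY$ or $\GY$ by the corresponding power of the eigenvalue. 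Evaluating this on the anti-invariant homology basis $\FY-\alphaY^{3}\FY,\ \alphaY^{4}\FY-\alphaY\FY,\ \GY,\ \alphaY\GY$ of \autoref{lem:hombasis} and simplifying with $\zeta_6^{3}=-1$ yields
\[\Pi=\begin{pmatrix} 2f_1 & 2\zeta_6^{2}f_1 & g_1 & \zeta_6^{-1}g_1 \\ 2f_2 & 2\zeta_6^{-2}f_2 & g_2 & \zeta_6 g_2 \end{pmatrix}.\]
Along the way one checks that $\omegaY_1,\omegaY_2$ integrate to zero over the invariant cycles $\FY+\alphaY^{3}\FY$ and $\alphaY^{4}\FY+\alphaY\FY$, so the anti-invariant part really splits off as claimed in \autoref{lem:hombasis}.

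The crucial point — and the place where the \Yfamily{} genuinely differs from the \Xfamily{} — is that a generic fibre $\Y_t$ has automorphism group exactly $\langle\alphaY\rangle\cong C_6$, so there are no extra involutions like $\gamma,\delta$ available to produce relations among the $f_i,g_i$. Instead I would exploit the polarisation directly: since $\PY$ is an abelian surface with intersection form $\EY$ (the anti-invariant $4\times4$ block of the matrix computed in \autoref{lem:hombasis}), its period matrix must satisfy Riemann's first bilinear relation $\Pi\,\EY^{-1}\,\Pi^{T}=0$. As $\EY^{-1}$ is alternating, $\Pi\,\EY^{-1}\,\Pi^{T}$ is an alternating $2\times2$ matrix, so this imposes exactly one scalar equation; a short computation shows its off-diagonal entry equals $\sqrt{3}\,\i\,(2f_1f_2-g_1g_2)$, giving the single relation $g_1g_2=2f_1f_2$.

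Finally, the Prym variety is determined only up to isomorphism of polarised abelian surfaces, so I am free to rescale the two eigenforms independently (each $\alphaY$-eigenspace in $\Omega(\Y_t)^{-}$ is one-dimensional). Rescaling $\omegaY_1$ by $g_1^{-1}$ and $\omegaY_2$ by $f_2^{-1}$, so that the stated period $f=\int_{\FY}\omegaY_1$ is taken after normalising $\int_{\GY}\omegaY_1=1$, the relation $g_1g_2=2f_1f_2$ turns the two rows into $(2f,2\zeta_6^{2}f,1,\zeta_6^{-1})$ and $(2,2\zeta_6^{-2},2f,2\zeta_6 f)$, which is precisely $\PrymY[t]$; the polarisation matrix is $\EY$ by construction. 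I expect the main obstacle to be establishing $g_1g_2=2f_1f_2$: lacking auxiliary automorphisms, one must extract it from the bilinear relations, being careful both with the convention ($\EY^{-1}$ rather than $\EY$, which the matching against the target confirms) and with the cycle-and-sheet bookkeeping underlying the entries of $\Pi$. Riemann's second relation $\i\,\Pi\,\EY^{-1}\,\overline{\Pi}^{T}>0$ is automatic, being inherited from $\Jac(\Y_t)$, and serves only as a positivity check.
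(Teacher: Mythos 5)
Your proposal is correct and takes essentially the same route as the paper: Bolza's method (the $\alpha$-eigenform structure) reduces the period matrix to the four unknowns $f_1,f_2,g_1,g_2$, and Riemann's first bilinear relation with $E^{-1}$ supplies the single missing identity — your off-diagonal entry $\sqrt{3}\,\i\,(2f_1f_2-g_1g_2)$ is exactly what one gets, and normalising $g_1=f_2=1$ recovers the paper's $g_2=2f_1$. The only (immaterial) differences are that the paper applies the relations to the full $3\times 6$ Jacobian period matrix with the $6\times 6$ intersection form and normalises before deriving the relation, and it additionally records the positivity constraint $2\lvert f\rvert^2<1$ from the second relation, which is not needed for the statement itself.
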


As above, we use Bolza's method for calculating the period matrix. Fortunately, in this case it suffices to regard~$\alphaY\coloneqq\alpha^{\Y}$.

By~\autoref{lem:forms}, the action of $\alphaY^{*}$ on $\Omega(\Y_{t})$ is given by
    \[\begin{pmatrix}
    \zeta_{6}^{-1} & 0 & 0 \\
    0 & \zeta_{6} & 0 \\
    0 & 0 & \zeta_{6}^{4}
    \end{pmatrix}\]
in the eigenform basis.

\begin{proof}[Proof of~\autoref{prop:prymc6}]
Again, we write $f_{i}\coloneqq f^{\Y}_{i}(t)=\int_{\FY}\omegaY_{i}$ and $g_{i}\coloneqq g^{\Y}_{i}(t)=\int_{\GY}\omegaY_{i}$. Since $\alphaY^{3}(\GY)=\rhoY(\GY)=-\GY$ and $\rhoY^{*}\omegaY_{3}=-\omegaY_{3}$, one has $g_{3}=0$. Using the action of $\alphaY$ on $\Omega(\Y_{t})$, one gets that, in these bases, the period matrix of $\Y_{t}$ reads
\begin{equation}\label{eq:pmYparam}
    \PiY_{t}=\begin{pmatrix}
    f_{1} & \zeta_{6}^{-1}f_{1} & -f_{1} & \zeta_{6}^{2}f_{1} & g_{1} & \zeta_{6}^{-1}g_{1} \\
    f_{2} & \zeta_{6}f_{2} & -f_{2} & \zeta_{6}^{-2}f_{2} & g_{2} & \zeta_{6}g_{2} \\
    f_{3} & \zeta_{6}^{-2}f_{3} & f_{3} & \zeta_{6}^{-2}f_{3} & 0 & 0 \\
    \end{pmatrix}.
\end{equation}

Moreover, by normalising $g_{1}=f_{2}=f_{3}=1$ and using Riemann's relations, one sees that
    \begin{align*}
    \PiY_{t}E^{-1}\bigl(\PiY_{t}\bigr)^{T}=0 & \Rightarrow g_{2}=2f_{1},\text{ and} \\
    \i\PiY_{t}E^{-1}\bigl(\overline{\PiY_{t}}\bigr)^{T}>0 & \Rightarrow  2|f_{1}|^{2}-1<0.
    \end{align*}

Writing $f:=f_{1}$, we finally get
    \begin{equation}\label{eq:periodmatrixY}
    \PiY_{t}=\begin{pmatrix}
    f & \zeta_{6}^{-1}f & -f & \zeta_{6}^{2}f & 1 & \zeta_{6}^{-1} \\
    1 & \zeta_{6} & -1 & \zeta_{6}^{-2} & 2f & 2\zeta_{6}f \\
    1 & \zeta_{6}^{-2} & 1 & \zeta_{6}^{-2} & 0 & 0 \\
    \end{pmatrix}.
    \end{equation}

As above, the Jacobian $\Jac(\Y_{t})$ is isogenous to the variety $\PY\times \Jac(\Y_{t}/\rhoY)$, whose period matrix is obtained by changing to the basis of $\H_{1}^{-}(\Y_{t},\Z)\oplus \H_{1}^{+}(\Y_{t},\Z)$ of \autoref{lem:hombasis}, yielding
    \[\begin{pmatrix}
    \PrymY[t] & 0 \\
    0 & \EllY[t]
    \end{pmatrix},\mbox{ where }%\qquad
    \PrymY[t]\coloneqq \begin{pmatrix}
    2f  & 2\zeta_{6}^{2}f  & 1 & \zeta_{6}^{-1}  \\
    2 & 2\zeta_{6}^{-2} & 2f  & 2\zeta_{6}f
    \end{pmatrix}\mbox{ and }
    \EllY[t]\coloneqq\begin{pmatrix}
    2 & 2\zeta_{6}^{-2} \\
    \end{pmatrix}.\]

The polarisation on $\PY$ is again given by the principal $4\times 4$ minor in the intersection matrix in the proof of~\autoref{lem:hombasis}, which agrees with $\EY$.
\end{proof}

\addsubsection{The endomorphism ring \texorpdfstring{$\End\PY$}{End(P(Y\_t))}}\label{subsec:EndPY}

In this section we study the endomorphism ring $\End\PY$ and the endomorphism algebra $\End_{\Q}\PY$ in order to get a description of the \Yfamily{} $\Y$ as a Shimura curve. More precisely, let $\mathfrak{M}$ denote the maximal order
    \begin{equation}\label{eq:maximalorder}
    \mathfrak{M}=\Z\left[\frac{\mathbf{1}+\mathbf{j}}{2}, \frac{\mathbf{1}-\mathbf{j}}{2}, \frac{\mathbf{i}+\mathbf{ij}}{2}, \frac{\mathbf{i}-\mathbf{ij}}{2}\right]
    \end{equation}
in the quaternion algebra
    \[F\coloneqq\left\{x_{0}+x_{1}\mathbf{i}+x_{2}\mathbf{j}+x_{3}\mathbf{ij}\ :\ x_{k}\in\Q\,,\,\mathbf{i}^{2}=2\,,\,\mathbf{j}^{2}=-3\right\}
    \cong\left(\frac{2,-3}{\Q}\right).\]

We will prove the following.

\begin{prop}\label{prop:shimura}The Prym-Torelli map gives an isomorphism between the compactification $\overline{\Y}$ of the \Yfamily{} $\Y$ and the (compact) Shimura curve whose points correspond to abelian surfaces with a $(1,2)$ polarisation, endomorphism ring $\End A \cong\mathfrak{M}$ and Rosati involution given by~\autoref{eq:C6Rosati}. This curve is isomorphic to $\HH/\Delta(2,6,6)$.
\end{prop}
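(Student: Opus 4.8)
The plan is to establish \autoref{prop:shimura} in three stages: first compute the endomorphism algebra and ring, then identify the Rosati involution and the resulting Shimura datum, and finally promote the Prym-Torelli period map to an isomorphism of the relevant curves. Starting from the explicit period matrix $\PrymY[t]$ of \autoref{prop:prymc6}, I would apply Bolza's method exactly as in the $\X$-case: produce candidate rational endomorphisms $R_\varphi$ together with their analytic companions $A_\varphi$ satisfying the compatibility $\bigl(\begin{smallmatrix}A_\varphi&0\\0&\overline{A_\varphi}\end{smallmatrix}\bigr)\bigl(\begin{smallmatrix}\PrymY[t]\\\overline{\PrymY[t]}\end{smallmatrix}\bigr)=\bigl(\begin{smallmatrix}\PrymY[t]\\\overline{\PrymY[t]}\end{smallmatrix}\bigr)R_\varphi$. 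The automorphism $\alphaY$ supplies one such endomorphism (coming from the $\C_6$-action), but since $\PY$ is two-dimensional and generically non-isogenous to a product, the full algebra should be four-dimensional over $\Q$. I would exhibit two anticommuting elements $\mathbf{i}$ and $\mathbf{j}$ with $\mathbf{i}^2=2$, $\mathbf{j}^2=-3$ by solving the integrality relations for $R_\varphi\in M_4(\Z)$, thereby realising the indefinite quaternion algebra $F\cong\left(\frac{2,-3}{\Q}\right)$ of discriminant $6$ inside $\End_\Q\PY$. Counting dimensions (a two-dimensional abelian variety has $\End_\Q$ of $\Q$-dimension at most $4$ when it is of quaternionic type) then forces $\End_\Q\PY=F$.

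Next I would pin down the integral structure and the polarisation. The lattice of rational endomorphisms actually preserving $\Lambda_t=\PrymY[t]\cdot\Z^4$ is an order in $F$; computing which $\Z$-combinations of $\mathrm{Id},\mathbf{i},\mathbf{j},\mathbf{ij}$ send $\Lambda_t$ to itself should yield precisely the maximal order $\mathfrak{M}$ of \autoref{eq:maximalorder}, since $F$ ramified only at $\{2,3\}$ has class number one and a unique maximal order up to conjugacy. The Rosati involution is then read off via $\varphi'=E^{-1}R_\varphi^T E$ with $E=\EY$, exactly as in \eqref{eq:C4involutionM2C}; this identifies the positive involution referenced as \autoref{eq:C6Rosati} and certifies that the $(1,2)$-polarisation is compatible with $\mathfrak{M}$. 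Together these show every generic fibre lands on the Shimura curve $X_6$ parametrising $(1,2)$-polarised abelian surfaces with $\End\cong\mathfrak{M}$ and the given Rosati involution, which is classically uniformised as $\HH/\Delta(2,6,6)$ (the $(2,6,6)$ triangle group being the known signature for discriminant $6$).

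The final and most delicate stage is upgrading the inclusion of images into an \emph{isomorphism} of curves. Both $\overline{\Y}$ and the Shimura curve $\HH/\Delta(2,6,6)$ are compact (the compactness of the Shimura side is automatic since $F$ is a division algebra, and $\overline{\Y}$ is compact by the stable-reduction analysis of \autoref{lem:stablediff} and \autoref{lem:stablecycles}, which shows the degenerate fibres still have compact Jacobians). I would argue the Prym-Torelli map $\overline{\Y}\to X_6$ is a non-constant holomorphic map between compact Riemann surfaces, hence finite of some degree; to show the degree is one, I would use the parametrisation $\P^*/\!\sim$ of \autoref{lem:families}(2) together with a count of special points. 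Concretely, the order-$12$ point $\Y_{1/2}$ must map to the orbifold point of order $6$ on the triangle quotient, and matching the three degenerate fibres against the cusps/elliptic points of $\HH/\Delta(2,6,6)$ should force degree one. The main obstacle I anticipate is precisely this injectivity/degree argument: establishing that distinct $\sim$-classes of $t$ give genuinely non-isomorphic polarised abelian surfaces with their $\mathfrak{M}$-structure (i.e. that the period map does not factor through a further quotient), and correctly matching the orbifold and cusp data on both sides so that the Euler-characteristic or Riemann-Hurwitz bookkeeping pins the degree to $1$ rather than merely to a divisor of the triangle-group index.
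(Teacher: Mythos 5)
Your first two stages reproduce the paper's argument: it too uses Bolza's method on the period matrix of \autoref{prop:prymc6} to exhibit $A_{\mathbf{i}}, R_{\mathbf{i}}$ and $\mathbf{j}=2\alpha-1$, identifies $\End_{\Q}\PY$ with $F\cong\left(\frac{2,-3}{\Q}\right)$, finds $\mathfrak{M}$ by checking which elements of $F$ have integral rational representation, and reads off the Rosati involution from $\EY$. The gaps are in your last stage. First, the uniformisation you call classical is misstated: by Takeuchi, the group of norm-one units of $\mathfrak{M}$ has signature $(0;2,2,3,3)$, \emph{not} $(2,6,6)$. The paper obtains $\Delta(2,6,6)$ only after the further observation that every such abelian surface admits two embeddings $F\hookrightarrow\End_{\Q}A$ differing by quaternion conjugation, so that the map $\HH/\Gamma(T,\mathfrak{M})\to\A_{2,(1,2)}$ has degree $2$ and the relevant curve is the index-two triangle-group quotient of the quadrilateral orbifold. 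Omitting this step leaves you comparing $\overline{\Y}$ with the wrong orbifold, and any Riemann--Hurwitz bookkeeping downstream would fail to close up.

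Second, your proposed matching of special points is wrong in detail: $\HH/\Delta(2,6,6)$ is compact and has no cusps, and by \autoref{prop:trianglegroup} the fibre $\Y_{\nicefrac{1}{2}}$ carrying the order-$12$ automorphism maps to the \emph{order-2} vertex $f(\nicefrac{1}{2})$, while the two degenerate fibres $\overline{\Y}_{1}$ and $\overline{\Y}_{\infty}$ map to the two order-$6$ vertices $f(1)=\tfrac{1}{2}\zeta_{6}$ and $f(\infty)=0$ -- the opposite of what you assert. The paper does not run a degree count at all: after invoking Shimura's construction (the isomorphism $\Phi\colon\mathfrak{M}\to\Lambda_t$ and the trace pairing) to realise the moduli space as $\HH/\Delta(2,6,6)$, it evaluates the period map at the three special fibres and shows that the image of a fundamental domain for $\P^{*}/\!\sim$ is exactly one $(\nicefrac{\pi}{2},\nicefrac{\pi}{6},\nicefrac{\pi}{6})$-triangle together with its reflection, i.e.\ a fundamental domain for $\Delta(2,6,6)$; this is what delivers the isomorphism. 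Your strategy is repairable, but only after correcting both miscalibrations above.
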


Recall that a (compact hyperbolic) triangle group is a Fuchsian group constructed in the following way. Let $l$, $m$ and $n$ be positive integers such that $1/l+1/m+1/n<1$ and consider a hyperbolic triangle $T$ in the hyperbolic plane with vertices $v_{l}$, $v_{m}$ and $v_{n}$ with angles $\pi/l$, $\pi/m$ and $\pi/n$ respectively. The subgroup $\Delta(l,m,n)$ of $\PSL_{2}(\R)$ generated by the positive rotations through angles $2\pi/l$, $2\pi/m$ and $2\pi/n$ around $v_{l}$, $v_{m}$ and $v_{n}$ respectively is called a \emph{triangle group of signature $(l,m,n)$}. The triangle $T$ is unique up to conjugation in $\PSL_{2}(\R)$ and, therefore, so is the associated triangle group described above (\cite[\S7.12]{beardon}). Note that the quadrilateral consisting of the union of the triangle $T$ and any of its reflections serves as a fundamental domain for $\Delta(l,m,n)$ (see~\autoref{fig:trianglegroup} for a fundamental domain of $\Delta(2,6,6)$ inside the hyperbolic disc~$\D$).

\medskip

Let us now calculate $\End\PY$. Since the automorphism $\alphaY$ of $\Y_{t}$ induces an automorphism of $\PY$ and $\mathbf{j}\coloneqq 2\alphaY-1$ satisfies $\mathbf{j}^{2}=-3$, there is always an embedding $\Q(\sqrt{-3})\hookrightarrow\End_{\Q}\PY$. However, the full endomorphism algebra of an abelian surface is never an imaginary quadratic field (see~\cite[Ex. 9.10(4)]{birkenhakelange}, for example) and one can check that the analytic and rational representations $A_{\mathbf{i}}$ and $R_{\mathbf{i}}$ defined below yield an element of $\End_{\Q}\PY$. It is then easy to see that the endomorphism algebra of the general member of our family agrees with the (indefinite) quaternion algebra $F$. 

Abelian varieties with given endomorphism structure have been intensely studied, notably by Shimura \cite{shimura}. Shimura explicitly constructs moduli spaces for such families in much greater generality than we require here. However, his results specialise to our situation.
To emulate his construction, we begin by observing that since $F\otimes\R\cong M_{2}(\R)$, we can see $F$ as a subalgebra of $M_{2}(\R)$. The following matrices show the relation between the embedding $F\hookrightarrow M_{2}(\R)$, the analytic representation $A\colon F\hookrightarrow M_{2}(\C)$ and the rational representation $R\colon F\hookrightarrow M_{4}(\Q)$
    \[\begin{array}{lll}
    \mathbf{i}=\begin{pmatrix}
    \sqrt{2} & 0 \\
    0 & -\sqrt{2}
    \end{pmatrix}\,,
    &
    A_{\mathbf{i}}=\begin{pmatrix}
    0 & 1 \\
    2 & 0
    \end{pmatrix}\,,
    &
    R_{\mathbf{i}}=\begin{pmatrix}
    0 & 0 & 1 & 1 \\
    0 & 0 & 0 & 1 \\
    2 & -2 & 0 & 0 \\
    0 & 2 & 0 & 0
    \end{pmatrix}; \\
    \mathbf{j}=\begin{pmatrix}
    0 & \sqrt{3} \\
    -\sqrt{3} & 0
    \end{pmatrix}\,,
    &
    A_{\mathbf{j}}=\begin{pmatrix}
    -i\sqrt{3} & 0 \\
    0 & i\sqrt{3}
    \end{pmatrix}\,,\quad
    &
    R_{\mathbf{j}}=\begin{pmatrix}
    -1 & 2 & 0 & 0 \\
    -2 & 1 & 0 & 0 \\
    0 & 0 & -1 & -2 \\
    0 & 0 & 2 & 1
    \end{pmatrix}; \\
    \mathbf{ij}=\begin{pmatrix}
    0 & \sqrt{6} \\
    \sqrt{6} & 0
    \end{pmatrix}\,,
    &
    A_{\mathbf{ij}}=\begin{pmatrix}
    0 & i\sqrt{3} \\
    -2i\sqrt{3} & 0
    \end{pmatrix}\,,
    &
    R_{\mathbf{ij}}=\begin{pmatrix}
    0 & 0 & 1 & -1 \\
    0 & 0 & 2 & 1 \\
    2 & 2 & 0 & 0 \\
    -4 & 2 & 0 & 0
    \end{pmatrix}.
    \end{array}
    \]

By checking which elements of $F$ have integral rational representation, one can see that the endomorphism ring $\End_{\Q}\PY$ of the general member of our family agrees with the maximal order $\mathfrak{M}$ defined above.

Proceeding as in the case of the \Xfamily{} and writing $x=x_{0}+x_{1}\mathbf{i}+x_{2}\mathbf{j}+x_{3}\mathbf{ij}$ for an element of $F$, we note that, by the Skolem-Noether theorem, the quaternion conjugation and the Rosati involution are conjugate. It is not difficult to check that, here, the Rosati involution is given by
    \begin{equation}\label{eq:C6Rosati}
    x':=\mathbf{j}^{-1}\overline{x}\mathbf{j} = x_{0}+x_{1}\mathbf{i}-x_{2}\mathbf{j}+x_{3}\mathbf{ij}\,,
    \end{equation}
where $\overline{x}=x_{0}-x_{1}\mathbf{i}-x_{2}\mathbf{j}-x_{3}\mathbf{ij}$ is the usual conjugation in $F$. Note that the Rosati involution in $F\hookrightarrow M_{2}(\R)$ agrees with transposition and that, under the embedding $F\hookrightarrow M_{2}(\C)$ given by the analytic representation, it is again the restriction of the involution
    \begin{equation}\label{eq:C6involutionM2C}
    \begin{array}{ll}
    \begin{array}{lll}
    M_{2}(\C) & \to & M_{2}(\C) \\ B & \mapsto & A^{-1} B^{H}A
    \end{array} & ,\quad \mbox{for }A=\begin{pmatrix}2&0\\0&1\\\end{pmatrix}\,.
    \end{array}
    \end{equation}

\begin{proof}[Proof of~\autoref{prop:shimura}]Let us give the construction of the \Yfamily{} as a Shimura curve. Following~\cite{shimura}, one can define the isomorphism
    \[\begin{array}{ccccl}
    \Phi: & \mathfrak{M} & \longrightarrow & \Lambda_{t} & \\
    & a & \longmapsto & A_{a}\cdot y\,, & \quad y=\left(\begin{smallmatrix}2f \\ 2 \end{smallmatrix}\right)
    \end{array}\]
where $A_{a}$ denotes the analytic representation of $a$, and check that the polarisation satisfies $E(\Phi(a),y)=\mathrm{tr}(a\cdot T)$ for $T=\frac{1}{3}\mathbf{j}\in F$. The family of abelian varieties $A$ with a $(1,2)$ polarisation together with an embedding $\mathfrak{M}\hookrightarrow\End A$ and Rosati involution induced by~\autoref{eq:C6Rosati} is then given by the Shimura curve $\HH/\Gamma(T,\mathfrak{M})$, where $\Gamma(T,\mathfrak{M})$ agrees with the group of elements of norm 1 of $\mathfrak{M}$. By~\cite{takeuchi} this is a quadrilateral group of signature~$\langle 0; 2,2,3,3\rangle$.

However, for each such variety $A$, there exist two different embeddings $F \hookrightarrow \mathrm{End}_{\Q} A$ which differ by quaternion conjugation on $F$. As a consequence, the map $\HH/\widetilde{\Gamma}(T,\mathfrak{M}) \to \A_{2,(1,2)}$ has degree 2, and the Shimura curve constructed above is a double cover of its image, which is uniformised by the triangle group $\Delta(2,6,6)$ extending~$\Gamma(T,\mathfrak{M})$ (see~\cite{takeuchi}).

Now, the Prym-Torelli image of $\overline{\Y}$ lies entirely in this family and the proposition follows.
\end{proof}

\begin{rem} Cyclic coverings of this type are well-known and have been intensely studied. For example, it immediately follows from the results of Deligne and Mostow \cite[\S 14.3]{delignemostow} that the compactified \Yfamily{} $\overline{\Y}$ is parametrised by~$\HH/\Delta(2,6,6)$.
 More precisely, the monodromy data of the \Yfamily{} yields (using their notation) $\mu_1=\mu_2=\nicefrac{1}{3}$, $\mu_3=\nicefrac{1}{2}$, and $\mu_4=\nicefrac{5}{6}$, hence we obtain a map from $\P^{1}$ into $\HH/\Delta(3,6,6)$. Taking the quotient by the additional symmetry in the branching data here present, it descends to a map from the basis of $\Y$ into $\HH/\Delta(2,6,6)$, as above.
\end{rem}

In our case, the lift of the period map $f=f(t)$ from $\P^{1}$ to the disc of radius $1/\sqrt{2}$ gives us a particular model of the Shimura curve introduced above as the quotient of this disc with the hyperbolic metric by the action of a specific triangle group $\Delta(2,6,6)$. In order to find a fundamental domain for this group, we will study the value of the period map at the special points of the compactification $\overline{\Y}$ of the \Yfamily{} $\Y$, namely the curves $\Y_{\nicefrac{1}{2}}$, $\overline{\Y}_{1}$ and $\overline{\Y}_{\infty}$. In particular, we will prove the following.

\begin{prop}\label{prop:trianglegroup}The $\Delta(2,6,6)$ group uniformising $\overline{\Y}$ is generated by the hyperbolic triangle with vertices $f(\nicefrac{1}{2})=\frac{3-\sqrt{3}+\i(\sqrt{3}-1)}{4}$ of angle $\nicefrac{\pi}{2}$, $f(1)=\frac{1}{2}\zeta_{6}$ of angle $\nicefrac{\pi}{6}$ and $f(\infty)=0$ of angle $\nicefrac{\pi}{6}$ inside the disc of radius $\nicefrac{1}{\sqrt{2}}$ (see~\autoref{fig:trianglegroup}). These vertices correspond to the curves $\Y_{\nicefrac{1}{2}}$, $\overline{\Y}_{1}$ and~$\overline{\Y}_{\infty}$ respectively.
\end{prop}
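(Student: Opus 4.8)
The plan is to evaluate the period map $f=f(t)=\int_{\FY}\omegaY_{1}$ at the three distinguished parameters $t=\nicefrac{1}{2}$, $t=1$ and $t=\infty$, to identify the orders of the resulting cone points, and then to invoke \autoref{prop:shimura}. Since by that proposition $\overline{\Y}$ is uniformised by the cocompact triangle group $\Delta(2,6,6)$, it has exactly three orbifold points, of orders $2$, $6$ and $6$, and these are the vertices of a fundamental triangle, with angle $\nicefrac{\pi}{\ell}$ at a vertex of order $\ell$. As $\Y_{\nicefrac{1}{2}}$, $\overline{\Y}_{1}$ and $\overline{\Y}_{\infty}$ are precisely the fibres of $\overline{\Y}$ carrying automorphisms beyond the generic $\langle\alphaY\rangle$ (the first by \autoref{lem:families}, the latter two by being degenerate), the period map must send them to these three cone points. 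It therefore remains to pin down the value of $f$ at each and to match up the orders.

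First I would dispatch the two degenerate fibres by means of the stable reduction of \autoref{lem:stablediff} and \autoref{lem:stablecycles}. For $t\to\infty$ the cycle $\FY$ degenerates to $\dF{\infty}$, which lives on the component $\overline{\Y}_{\infty}^{1}$, whereas $\omegaY_{1}$ degenerates to $\omega_{1}^{\infty}=\nicefrac{\d{x}}{y}$, which is supported on $\overline{\Y}_{\infty}^{2}$; since cycle and form lie on different components the integral vanishes, and by continuity of the period map on $\overline{\Y}$ we get $f(\infty)=0$, the centre of the disc. For $t\to 1$ the cycle splits as $\dF{1}=\dF{1}_{1}+\dF{1}_{2}$, while $\omega_{1}^{1}=\nicefrac{\d{x}}{y}$ is supported only on $\overline{\Y}_{1}^{1}$, so that the contribution of $\dF{1}_{2}$ drops out and $f(1)=\int_{\dF{1}_{1}}\nicefrac{\d{x}}{y}$ becomes a genuine period on the genus-$2$ curve $\overline{\Y}_{1}^{1}\colon y^{6}=x^{2}(x-1)^{5}$. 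Writing $\nicefrac{1}{y}=x^{-1/3}(x-1)^{-5/6}$, this integral is of hypergeometric (Euler Beta) type, and after fixing the branch of the sixth root and the orientation of $\dF{1}_{1}$ through the node dictated by the shrinking picture, I expect it to evaluate to $\tfrac12\zeta_{6}$.

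For the remaining vertex I would exploit the extra symmetry. On $\Y_{\nicefrac{1}{2}}$ the automorphism $\betaY$ with $\betaY^{2}=\alphaY$ of \autoref{lem:actionbeta} acts on the Prym part with analytic representation $\mathrm{diag}(\zeta_{12}^{-1},\zeta_{12}^{7})$ and with an integral rational representation on $\H_{1}^{-}(\Y_{\nicefrac{1}{2}},\Z)$ obtained by restricting $R_{\betaY}$ to the anti-invariant lattice. Feeding these into Bolza's relation $A_{\betaY}\,\PrymY[\nicefrac{1}{2}]=\PrymY[\nicefrac{1}{2}]\,R_{\betaY}$ and using the parametrisation of $\PrymY[t]$ by $f$ from \autoref{prop:prymc6} yields a linear system whose unique solution in the disc $\{2\abs{f}^{2}<1\}$ is $f(\nicefrac{1}{2})=\tfrac{3-\sqrt{3}+\i(\sqrt{3}-1)}{4}$. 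For the orders, since $\betaY$ generates a cyclic group of order $12$ containing the generic $\alphaY$ with quotient of order $2$, the point $\Y_{\nicefrac{1}{2}}$ is the cone point of order $2$; as there are exactly three cone points of orders $2$, $6$ and $6$, the remaining two, $\overline{\Y}_{1}$ and $\overline{\Y}_{\infty}$, must both have order $6$ by elimination. Hence the vertices carry angles $\nicefrac{\pi}{2}$, $\nicefrac{\pi}{6}$ and $\nicefrac{\pi}{6}$, and the triangle they span is a fundamental domain for $\Delta(2,6,6)$, as in \autoref{fig:trianglegroup}.

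The hard part will be the explicit evaluation of the period $f(1)=\int_{\dF{1}_{1}}\nicefrac{\d{x}}{y}$: keeping track of the correct branch of the sixth root together with the homology class $\dF{1}_{1}$ passing through the node, so that the Beta-integral lands exactly on $\tfrac12\zeta_{6}$ rather than on a Galois-conjugate value, is the delicate computation. A secondary subtlety is that one must confirm the three computed points actually bound a hyperbolic triangle with the asserted angles in the rescaled Poincaré metric on the disc of radius $\nicefrac{1}{\sqrt{2}}$; this can either be checked directly or, more cleanly, be bypassed by the order-theoretic argument above, since a triangle group's fundamental triangle is determined up to congruence by the orders of its vertices.
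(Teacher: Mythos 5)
Your overall architecture matches the paper's: evaluate the period map at the three special fibres, identify $\Y_{\nicefrac{1}{2}}$ as the order-$2$ point and the two degenerate fibres as the order-$6$ points, and conclude that the triangle they span is a fundamental triangle for $\Delta(2,6,6)$. Your treatment of $f(\infty)$ (cycle and form degenerate to different components, so the period vanishes) and of $f(\nicefrac{1}{2})$ (solve $A_{\betaY}\PiY_{\nicefrac{1}{2}}=\PiY_{\nicefrac{1}{2}}R_{\betaY}$ using \autoref{lem:actionbeta}) is exactly what the paper does. However, there is a genuine gap at the vertex $f(1)$: you reduce it to the explicit evaluation of a Beta-type integral $\int_{\dF{1}_{1}}\nicefrac{\d{x}}{y}$ on the genus-$2$ component, acknowledge that fixing the branch and the homology class through the node is delicate, and then only state that you \emph{expect} the answer to be $\tfrac12\zeta_{6}$. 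That expectation is never verified, and it is the crux of the computation. The paper sidesteps the integral entirely: \autoref{lem:stablecycles} gives the purely homological relation $\dG{1}_{2}=\alphaY\dF{1}_{2}-\dF{1}_{2}$, and since $\omega_{2}^{1}$ is supported on $\overline{\Y}_{1}^{2}$ and the Riemann relations in \autoref{prop:prymc6} force $g_{2}=2f$, one reads off $2f(1)=g_{2}(1)=(\zeta_{6}-1)f_{2}=\zeta_{6}-1$ with no transcendental input. You should adopt this linear-algebraic route (or actually carry out the Beta computation, which is substantially more work).

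A second, smaller issue: the direct computations do not land on the representatives named in the statement. One finds $f(1)=\tfrac12\zeta_{6}^{2}$ (since $\zeta_{6}-1=\zeta_{6}^{2}$) and $\vartheta=\tfrac{3-\sqrt{3}+\i(1-\sqrt{3})}{4}$, i.e.\ the complex conjugate of the value you assert as the ``unique solution in the disc.'' The paper's final step is to observe that, because $f(\infty)=0$ is a vertex of order $6$, these points are equivalent to $\tfrac12\zeta_{6}$ and $\overline{\vartheta}$ respectively under reflection along the sides of the triangle, and only then is the fundamental domain chosen as in \autoref{fig:trianglegroup}. Your proposal omits this identification, so as written it would either produce the wrong representatives or assert a uniqueness that the linear system does not deliver. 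Your closing remark that the angles can be deduced order-theoretically rather than checked metrically is fine and agrees in spirit with the paper.
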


\begin{figure}[!htp]
\centering
\includegraphics{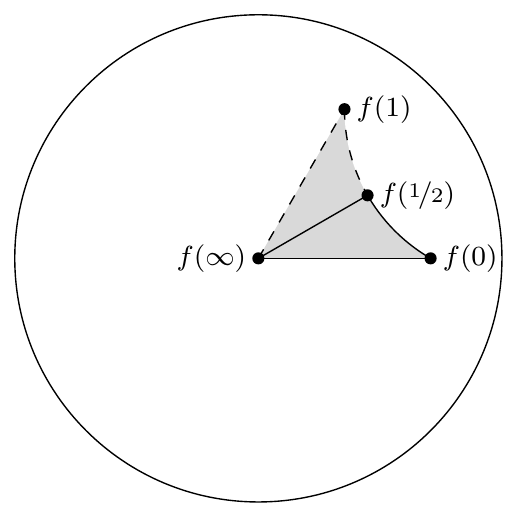}
\caption{Fundamental domain of $\Delta(2,6,6)$ on the disc of radius $\nicefrac{1}{\sqrt{2}}$ with vertices 0, $\nicefrac{1}{2}$ and $\frac{1}{4}(3-\sqrt{3})+\frac{\i}{4}(\sqrt{3}-1)$ corresponding to special fibres of $\overline{\Y}$.}
\label{fig:trianglegroup}
\end{figure}

\begin{proof}
It follows from~\autoref{lem:families}(2) that the curve $\Y_{\nicefrac{1}{2}}$ corresponds to the point of order 2 in the triangle group and, therefore, $\overline{\Y}_{1}$ and $\overline{\Y}_{\infty}$ correspond to the two points of order 6. Consider~\autoref{eq:periodmatrixY}, giving the period matrix $\PiY_{t}$ of the general member of the \Yfamily.

In the case of $\overline{\Y}_{\infty}$, it follows from~\autoref{lem:stablediff} and~\autoref{lem:stablecycles} that $\int_{\FY}\omega_{1}^{\infty} = \int_{\GY}\omega_{2}^{\infty} = \int_{\GY}\omega_{3}^{\infty} = 0$ and one has the following period matrix
    \[\PiY_{\infty}=\begin{pmatrix}
    0 & 0 & 0 & 0 & 1 & \zeta_{6}^{-1} \\
    1 & \zeta_{6} & -1 & \zeta_{6}^{-2} & 0 & 0 \\
    1 & \zeta_{6}^{-2} & 1 & \zeta_{6}^{-2} & 0 & 0 \\
    \end{pmatrix}.\]

In particular $f(\infty)=0$.

Similarly, using~\autoref{lem:actionbeta} and the fact that $A_{\betaY}\PiY_{\nicefrac{1}{2}}=\PiY_{\nicefrac{1}{2}}R_{\betaY}$ one gets
    \[\PiY_{\nicefrac{1}{2}}=\begin{pmatrix}
    \vartheta & \zeta_{6}^{-1}\vartheta & -\vartheta & \zeta_{6}^{2}\vartheta & 1 & \zeta_{6}^{-1} \\
    1 & \zeta_{6} & -1 & \zeta_{6}^{-2} & 2\vartheta & 2\zeta_{6}\vartheta \\
    1 & \zeta_{6}^{-2} & 1 & \zeta_{6}^{-2} & 0 & 0 \\
    \end{pmatrix}\ ,\]
where
    \[\vartheta = \frac{3-\sqrt{3}+\i(1-\sqrt{3})}{4}\,. \]

Finally, in the case $\overline{\Y}_{1}$, it follows again from~\autoref{lem:stablecycles} that $\dG{1}_{2} = \alphaY\dF{1}_{2}-\dF{1}_{2}$. Comparing this with the entries of the period matrix $\PiY_{t}$ in~\autoref{eq:pmYparam}, one finds that $2f(1)=g_{2}(1)=\zeta_{6}-1$. Therefore $f(1)=\frac{1}{2}\zeta_{6}^{2}$ and
    \[\PiY_{1}=\begin{pmatrix}
    \frac{1}{2}\zeta_{6}^{2} & \frac{1}{2}\zeta_{6} & -\frac{1}{2}\zeta_{6}^{2} & \frac{1}{2}\zeta_{6}^{-2} & 1 & \zeta_{6}^{-1} \\
    1 & \zeta_{6} & -1 & \zeta_{6}^{-2} & \zeta_{6}^{2} & -1 \\
    1 & \zeta_{6}^{-2} & 1 & \zeta_{6}^{-2} & 0 & 0 \\
    \end{pmatrix}.\]

Now, since $f(\infty)=0$ is a point of order 6 of $\Delta(2,6,6)$, the point $\frac{1}{2}\zeta_{6}^{2}$ corresponding to $\overline{\Y}_{1}$ (respectively the point $\vartheta$
%$\frac{3-\sqrt{3}+\i(1-\sqrt{3})}{4}$ 
corresponding to $\Y_{\nicefrac{1}{2}}$) is equivalent to $f(1)=\frac{1}{2}\zeta_{6}$ (respectively to $f(\nicefrac{1}{2})=\overline{\vartheta}$), by reflecting along the sides of the triangle. We may therefore choose our fundamental domain as claimed.
%\frac{3-\sqrt{3}+\i(\sqrt{3}-1)}{4}$).
\end{proof}

\endgroup

\section{Orbifold points in \texorpdfstring{$\W$}{W\_D}}\label{sec:orbifoldpts}

In this section we will finally determine the orbifold points on $\W$. By~\autoref{thm:orbicrit}, these correspond precisely to the fibres of the \Xfamily{} $\X$ and of the \Yfamily{} $\Y$ whose Prym variety admits proper real multiplication by $\O_{D}$, together with an eigenform for real multiplication having a 4-fold zero at a fixed point of the Prym involution. Remember that $\O_{D}$ is defined as $\Z[T]/(T^{2}+bT+c)$, where $D=b^{2}-4c$. In particular, $\O_{D}$ is generated as a $\Z$-module by
    \[T\coloneqq\left\{\begin{array}{ll}
    \dfrac{\sqrt{D}}{2}\ , & \mbox{if $D\equiv 0\bmod{4}$}\,;\\
    \dfrac{1+\sqrt{D}}{2}\ , & \mbox{if $D\equiv 1\bmod{4}$\,.}
    \end{array}\right.\]

Let $D$ be a discriminant with conductor $f_{0}$ and let us recall the sets
\begin{align*}
\Hcal_{2}(D)\coloneqq \{(a,b,c) \in \Z^{3}\ :&\  a^{2}+b^{2}+c^{2}=D\ ,\ \gcd(a,b,c,f_{0})=1\,\},\text{ and} \\[0.2cm]
\Hcal_{3}(D)\coloneqq \{(a,b,c)\in\Z^{3}\ :&\
      2a^{2}-3b^{2}-c^{2}=2D\ ,\ \gcd(a,b,c,f_{0})=1\ ,\\
      &\ -3\sqrt{D}<a<-\sqrt{D}\ ,\ c < b \le 0\ , \\ 
      &\ (4a - 3b - 3c <0)\vee(4a - 3b - 3c=0\ \wedge\ c < 3b)\,\}
\end{align*}
defined in~\autoref{sec:intro}.

The number of orbifold points on $\W$ of orders 2, 3, 4 and 6 are given by the following formulas.

      \[\begin{array}{lll} e_{2}(D)\coloneqq & \left\{\begin{array}{l}
      0\ , \\
      |\Hcal_{2}(D)|/24\ ,
      \end{array}\right. &
      \begin{array}{l}
      D\equiv1\bmod{4}\quad\mbox{or}\quad D=8,12\,;\\
      \mbox{otherwise}\,;
      \end{array}
      \\[0.6cm]
      e_{3}(D)\coloneqq & \left\{\begin{array}{l}
      0\ , \\
      |\Hcal_{3}(D)|\ ,
      \end{array}\right. &
      \begin{array}{l}
      D=12\,;\\
      \mbox{otherwise}\,;
      \end{array}
      \\[0.6cm]
      e_{4}(D)\coloneqq & \left\{\begin{array}{l}
      1\ , \\
      0\ ,
      \end{array}\right. &
      \begin{array}{l}
      D=8\,; \\
      \mbox{otherwise}\,;
      \end{array}
      \\[0.6cm]
      e_{6}(D)\coloneqq & \left\{\begin{array}{l}
      1\ , \\
      0\ ,
      \end{array}\right. &
      \begin{array}{l}
      D=12\,; \\
      \mbox{otherwise}\,.
      \end{array}\\
      \end{array}\]

%%%%%%%%%%%%%%%%%%%%

\subsection{Points of order 2 and 4}

\begin{theorem}\label{thm:orbpt2and4}The curve $\W[D=8]$ has one orbifold point of order 4. Moreover, no other $\W$ has orbifold points of order 4.

Let $D\neq 8,12$ be a discriminant with conductor $f_{0}$. The number of orbifold points of order 2 in $\W$ is the generalised class number $e_{2}(D)$ defined above.
\end{theorem}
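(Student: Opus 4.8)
The plan is to transfer the entire question to the fixed abelian surface $\PX$, which by \autoref{prop:prymc4} does not depend on $t$, and to read off \autoref{thm:orbicrit}(1) as a counting problem inside $\End\PX$. By that proposition an orbifold point of order $2$ or $4$ is precisely a fibre $\X_t$ for which $\PX$ carries proper real multiplication by $\O_D$ having one of the four $4$-fold-zero forms of \autoref{lem:4foldforms} as an eigenform, the order being $4$ exactly when $\X_t\cong\X_{-1}$ and $2$ otherwise. Since $\PX$ is fixed, real multiplication is a structure on a single surface, and the first task is to enumerate the self-adjoint endomorphisms realising it, afterwards dividing by the automorphisms that identify isomorphic data.

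First I would determine the self-adjoint part of $\End\PX$. From \autoref{EndPX} and the Rosati involution computed there (namely $\alpha'=-\alpha$, $\gamma'=\gamma$, $\delta'=\delta$, given on $M_2(\C)$ by conjugation by $\left(\begin{smallmatrix}2&0\\0&1\end{smallmatrix}\right)$), the traceless self-adjoint elements are spanned over $\Q$ by $\gamma$, $\delta$ and $\alpha\gamma\delta$, and since $\gamma,\delta$ anticommute with $\alpha$ central one computes directly that $\mu^2=(a^2+b^2+c^2)\,\mathrm{Id}$ for $\mu=a\gamma+b\delta+c\,\alpha\gamma\delta$. The delicate point is pinning down the exact \emph{integral} self-adjoint lattice: testing which rational combinations have integral rational representation then shows that, in the appropriate coordinates, a generator of $\O_D$ corresponds to an integral point with $a^2+b^2+c^2=D$, proper real multiplication corresponding to the primitivity $\gcd(a,b,c,f_0)=1$. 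Thus proper real multiplications by $\O_D$ biject with $\Hcal_2(D)$. A further outcome of the lattice computation is that the integral self-adjoint endomorphisms all have \emph{even} reduced trace, whereas a generator of $\O_D$ for odd $D$ necessarily has odd trace; this obstruction forces $e_2(D)=0$ when $D$ is odd.

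It remains to pass from real multiplications to orbifold points. For each self-adjoint $\mu$ generating $\O_D$ its two eigenlines in $\P\Omega(\X_t)^-$ are determined, and by \autoref{thm:orbicrit}(1) together with \autoref{lem:4foldforms} there is a fibre on which the distinguished form $\omegaX_1$ lands on one of these eigenlines; since the four $4$-fold-zero forms constitute a single $\Aut(\X_t)$-orbit and the automorphisms $\alpha,\gamma,\delta$ conjugate one real multiplication into another, distinct orbifold points correspond to orbits of the polarised automorphism group of $\PX$ acting on $\Hcal_2(D)$. This group acts through its image in the orthogonal group of $a^2+b^2+c^2$, which I would identify with the order-$24$ group of orientation-preserving signed permutations; for $D\neq 8,12$ this action is free, since a non-trivial stabiliser would produce exactly the extra symmetries of the Fermat ($D=8$) or Wiman ($D=12$) fibres excluded by hypothesis. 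Hence the number of order-$2$ points equals $|\Hcal_2(D)|/24=e_2(D)$.

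Finally, for order $4$ I would localise at the Fermat fibre $\X_{-1}$, which by \autoref{thm:families} is the only possibility. By \autoref{fermateigen} the form $\omegaX_1$ is an eigenform of the order-$8$ automorphism $\beta$ with $\beta^*\omegaX_1=\zeta_8\omegaX_1$, so $\beta+\beta^{-1}$ is self-adjoint (as $\beta'=\beta^{-1}$) and acts on $\omegaX_1$ by $\zeta_8+\zeta_8^{-1}=\sqrt2$, generating $\O_8=\Z[\sqrt2]$; thus $\W[D=8]$ carries an orbifold point of order $4$. Conversely, if a proper real multiplication has $[\omegaX_1]$ as an eigenline, then self-adjointness forces its second eigenline to be the polar-orthogonal complement of $[\omegaX_1]$, which is the remaining $\beta$-eigenline; hence the generator commutes with $\beta$, lies in $\Z[\beta+\beta^{-1}]=\O_8$, and properness gives $D=8$. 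The main obstacle throughout is the pair of lattice computations of the middle paragraphs—pinning down the exact integral self-adjoint lattice (hence the normalisation yielding $a^2+b^2+c^2=D$ and the even-trace obstruction) and verifying that the polarised automorphism group surjects onto the order-$24$ rotation group of the form with free action away from $D=8,12$.
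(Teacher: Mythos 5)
Your overall strategy coincides with the paper's: reduce everything to the fixed surface $\PX$, show that traceless self-adjoint square roots of $D$ in $\End\PX$ are exactly the $a A_\gamma+bA_\delta+c\i A_{\gamma\delta}$ with $a^2+b^2+c^2=D$ integral (the paper's \autoref{lem:sqrtDforb2}), that properness is the condition $\gcd(a,b,c,f_0)=1$ (\autoref{lem:RMforC4}), and that a parity obstruction kills $D\equiv 1\bmod 4$ (\autoref{lem:0mod4}). Where you differ is in the two finishing steps. For the division by $24$, the paper does not use a group action on $\Hcal_2(D)$ at all: it counts $|\Hcal_2(D)|$ eigenlines in $\P(\C^2)$ (two per triple, with $(a,b,c)$ and $-(a,b,c)$ giving the same pair), invokes M\"oller's Prop.~4.6 to identify each eigenline with an element of $\P\Omega(\X_t)^-(4)$ for some $t$, and then divides by $4\cdot 6$ because each fibre carries $4$ distinguished forms forming one $\Aut(\X_t)$-orbit and the map $t\mapsto\X_t$ is generically $6{:}1$ (\autoref{lem:families}). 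For $D=8$, the paper explicitly lists the $12$ elements of $\Hcal_2(8)$ and matches their eigenvectors with $Q_t(\omegaX_j)$ at $t=-1,\nicefrac12,2$, whereas your commutant argument via $\beta+\beta^{-1}=\sqrt2$ and $\Z[\zeta_8]$ is a genuinely different and arguably cleaner route to the same conclusion.

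Two points in your middle step deserve more care. First, you assert that for each eigenline of $A_{\sqrt D}(a,b,c)$ ``there is a fibre on which the distinguished form $\omegaX_1$ lands on one of these eigenlines''; \autoref{thm:orbicrit} only gives the converse implication, and the surjectivity of $(t,j)\mapsto Q_t(\omegaX_j)$ onto the eigenlines is exactly what the paper imports from \cite[Prop.~4.6]{moellerprym} --- without it your count could undercount. Second, the order-$24$ group is not realised by $\Aut(\X_t)$ of a single fibre: conjugation by $\gamma$ and $\delta$ only yields the Klein four-group of even sign changes on $(a,b,c)$ (and $\alpha$ is central, hence acts trivially), so the remaining $\Sym_3$ must come from isomorphisms between the six fibres $\X_{t'}$, $t'\in\Sym_3\cdot t$, and one must check both that these realise all coordinate permutations and that two triples in different orbits really give non-isomorphic flat surfaces. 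You flag this as the main obstacle, correctly; note also that your freeness claim can be verified directly on the quadratic form (a nontrivial stabiliser in the rotation group forces $\{|a|,|b|,|c|\}=\{s,s,0\}$ or $\{s,s,s\}$, hence $D=2s^2$ or $3s^2$, which with the gcd and congruence conditions gives only $D=8,12$), rather than by appeal to extra automorphisms of the fibres. With these two points filled in, your argument is sound and numerically consistent with the paper's.
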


Let us recall that the Prym image of any fibre of the \Xfamily{} $\X$ is given by $\PX=\C^{2}/\Lambda$, where $\Lambda=\PrymX\cdot\Z^{4}$ for
    \[\PrymX= \begin{pmatrix}
    -\frac{1+i}{2} & 1 & 1 & 0 \\
    1 & -(1+i) & 0 & 2 \\
    \end{pmatrix}\]
and that we have $\End_{\Q}\PX\cong M_2(\Q[\i])$.

We will first study the possible embeddings of $\O_{D}$ in $\End\PX$ as self-adjoint endomorphisms.

\begin{lemma}\label{lem:sqrtDforb2}Let $A$ be an element of $\End\PX$. The following are equivalent:
    \begin{enumerate}[label=\emph{(\roman*)}]
    \item $A$ is a self-adjoint endomorphism such that $A^{2}=D$;
    \item $A\coloneqq A_{\sqrt{D}}(a,b,c)=a\cdot A_{\gamma}+b\cdot A_{\delta}+c\i\cdot A_{\gamma\delta}$ for some $a,b,c\in\Z$ such that $a^{2}+b^{2}+c^{2}=D$.
    \end{enumerate}
\end{lemma}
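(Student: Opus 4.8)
The plan is to prove the two implications separately, working inside $\End_\Q\PX\cong M_2(\Q[\i])$ as described in \autoref{EndPX} and controlling the integral structure through the rational representations computed there. Throughout I set $u\coloneqq A_\gamma$, $v\coloneqq A_\delta$ and $w\coloneqq\i A_{\gamma\delta}$.

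For the implication (ii)$\Rightarrow$(i), I would first record the relevant relations directly from the analytic representations of \autoref{EndPX}: one checks $u^2=v^2=\mathrm{Id}$ and $A_{\gamma\delta}^2=-\mathrm{Id}$, so that $w^2=\i^2 A_{\gamma\delta}^2=\mathrm{Id}$, and that $u,v,w$ pairwise anti-commute. Since $\gamma$, $\delta$ and $\alphaX\gamma\delta$ are genuine automorphisms of $\X_t$ preserving the Prym lattice, $u,v,w$ lie in $\End\PX$, so $A=au+bv+cw$ is integral for $a,b,c\in\Z$. Expanding $A^2$ and using that the mixed terms cancel by anti-commutativity while $u^2=v^2=w^2=\mathrm{Id}$ gives $A^2=(a^2+b^2+c^2)\mathrm{Id}=D$. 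For self-adjointness I would combine $\gamma'=\gamma$, $\delta'=\delta$, $(\gamma\delta)'=-\gamma\delta$ with the fact that the Rosati involution acts as complex conjugation on the centre $\Q[\i]$ (indeed $\alphaX'=-\alphaX$, i.e. $\i'=-\i$); the anti-involution property then yields $w'=(A_{\gamma\delta})'\,\i'=(-A_{\gamma\delta})(-\i)=w$, so $u,v,w$ are self-adjoint and hence $A'=A$.

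For (i)$\Rightarrow$(ii), I would first identify the self-adjoint elements of $\End_\Q\PX$. Since $\i$ is a unit, $\{\mathrm{Id},u,v,w\}$ is a $\Q[\i]$-basis; writing $A=z_0\mathrm{Id}+z_1 u+z_2 v+z_3 w$ with $z_k\in\Q[\i]$, the computation above gives $A'=\bar z_0\mathrm{Id}+\bar z_1 u+\bar z_2 v+\bar z_3 w$, so $A'=A$ forces every $z_k\in\Q$. For a real combination $A=a_0\mathrm{Id}+au+bv+cw$ the same expansion yields $A^2=(a_0^2+a^2+b^2+c^2)\mathrm{Id}+2a_0(au+bv+cw)$; imposing $A^2=D$ forces $2a_0(au+bv+cw)=0$. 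The purely scalar branch $a=b=c=0$, $a_0^2=D$ is excluded because $D$ is a non-square, so $a_0=0$ and $a^2+b^2+c^2=D$ with $a,b,c\in\Q$.

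It remains to promote $a,b,c\in\Q$ to $a,b,c\in\Z$, and this is the step I expect to be the \emph{main obstacle}, as it is precisely where the order $\End\PX$ rather than the algebra $\End_\Q\PX$ must be used. The plan is to invoke the integrality criterion from \autoref{subsec:PrymC4}: $A\in\End\PX$ if and only if its rational representation $R_A=aR_\gamma+bR_\delta+cR_w$ lies in $M_4(\Z)$, where $R_w=R_{\alphaX}R_\gamma R_\delta$ is read off from the matrices in the proof of \autoref{EndPX}. With these three explicit integer matrices in hand, I would locate entry positions where $R_\gamma$, $R_\delta$ and $R_w$ are sufficiently independent that integrality of all entries of $R_A$ forces each of $a$, $b$ and $c$ to be an integer; this amounts to checking $\End\PX\cap(\Q u\oplus\Q v\oplus\Q w)=\Z u\oplus\Z v\oplus\Z w$. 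Together with $a^2+b^2+c^2=D$ this gives $(a,b,c)\in\Z^3$ and closes the equivalence.
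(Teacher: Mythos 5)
Your proposal is correct and follows essentially the same route as the paper: decompose $A$ over $\Q[\i]$ in the quaternion basis generated by $A_\gamma$, $A_\delta$, $A_{\gamma\delta}$, use the explicit Rosati involution to force real coefficients, use $A^2=D$ (and non-squareness of $D$) to kill the scalar part, and read off integrality from the rational representation, which the paper writes out explicitly as $R_{\sqrt{D}}(a,b,c)$ and which indeed has $a$, $-b$ and $-c$ as individual entries. Your observation that $w=\i A_{\gamma\delta}=A_{\alpha\gamma\delta}$ is itself the analytic representation of an automorphism, so that $\Z u\oplus\Z v\oplus\Z w\subseteq\End\PX$ is automatic, is a small but valid streamlining of the "if" half of the integrality step.
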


\begin{proof}
By \autoref{EndPX}, any element of $\End_{\Q}\PX$ can be written as $A=a\cdot A_{\gamma}+b\cdot A_{\delta}+c\cdot A_{\gamma\delta}+ d\cdot\mathrm{Id}$, with $a,b,c,d\in\Q[\i]$. By~\autoref{eq:C4involutionM2C} it is clear that $A$ is self-adjoint if and only if $a,b,d\in\Q$ and $c\in\Q\cdot\i$. On the other hand, only scalars or pure quaternions satisfy $A^{2}\in\Q$, hence~$d=0$. A simple calculation shows that this implies $D=A^{2}=a^{2}+b^{2}+c^{2}$.

Now, one can check that the rational representation of such an element is given by
    \[R_{\sqrt{D}}(a,b,c)=\begin{pmatrix}
    a+b+c & -2c & 0 & 2a+2c \\
    a & -a-b-c & -a-c & 0 \\
    0 & 2b & a+b+c & 2a \\
    -b & 0 & -c & -a-b-c
    \end{pmatrix}\,,\]
therefore $A$ induces an endomorphism if and only if~$a,b,c\in \Z$.
\end{proof}

The analytic representation
    \[A_{\sqrt{D}}(a,b,c)=\begin{pmatrix} b & a\cdot\dfrac{1-\i}{2}-c\cdot\dfrac{1+\i}{2} \\ a(1+\i)-c(1-\i) & -b \end{pmatrix}\]
has eigenvectors
    \begin{equation}\label{eq:eigenvectors}
    \omega(a,b,c)^+=\begin{pmatrix}\dfrac{-1+\i}{2}\cdot\dfrac{a-c\,\i}{b+\sqrt{D}} \\[10pt] 1 \end{pmatrix}
    %\quad
    \mbox{ and\ \  }%\quad
    \omega(a,b,c)^-=\begin{pmatrix}\dfrac{-1+\i}{2}\cdot\dfrac{a-c\,\i}{b-\sqrt{D}} \\[10pt] 1 \end{pmatrix}\,.
    \end{equation}

The eigenvectors (almost) determine the triple $(a,b,c)$ and the discriminant~$D$.

\begin{lemma}\label{lem:sameeigen2}$A_{\sqrt{D}}(a,b,c)$ and $A_{\sqrt{D'}}(a',b',c')$ have the same eigenvectors if and only if
\begin{enumerate}[label=\emph{(\roman*)}]
    \item $D=m^{2}E$ and $D'=m'^{2}E$ for some discriminant $E$, with $\gcd(m,m')=1$, and
    \item Both $(a,b,c)$ and $(a',b',c')$ are integral multiples of a triple $(a_{0},b_{0},c_{0})\in\Z^{3}$ with~$a_{0}^{2}+b_{0}^{2}+c_{0}^{2}=D_{0}$.
\end{enumerate}
In particular, $A_{\sqrt{D}}(a,b,c)$ and $A_{\sqrt{D}}(a',b',c')$ have the same eigenvectors if and only if~$(a',b',c')=\pm(a,b,c)$. More precisely: $\omega(a,b,c)^+=\omega(-a,-b,-c)^-$ and $\omega(a,b,c)^-=\omega(-a,-b,-c)^+$.
\end{lemma}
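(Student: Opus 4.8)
The plan is to reduce the statement to a question about scalar multiples of $2\times 2$ matrices and then read off the two conditions from the explicit analytic representation. Since the eigenvectors in \autoref{eq:eigenvectors} are normalised to have second coordinate $1$, two of them agree exactly when their first coordinates agree; so "having the same eigenvectors" means that $A_{\sqrt{D}}(a,b,c)$ and $A_{\sqrt{D'}}(a',b',c')$ share their two (distinct) eigenlines. The elementary fact I would invoke first is that two trace-zero $2\times 2$ complex matrices with two distinct eigenlines share both eigenlines if and only if one is a nonzero scalar multiple of the other: in a common eigenbasis both are diagonal with opposite diagonal entries, hence proportional. Thus the problem becomes: determine when $A_{\sqrt{D'}}(a',b',c')=\lambda\, A_{\sqrt{D}}(a,b,c)$ for some $\lambda\in\C^\times$.

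Next I would exploit the shape of the analytic representation. Writing $u\coloneqq a\tfrac{1-\i}{2}-c\tfrac{1+\i}{2}$, the matrix $A_{\sqrt{D}}(a,b,c)$ has diagonal $(b,-b)$ and off-diagonal entries $u$ (upper) and $2\overline{u}$ (lower). The relation $A'=\lambda A$ then forces $b'=\lambda b$, $u'=\lambda u$ and $\overline{u'}=\lambda\overline{u}$. Conjugating $u'=\lambda u$ and comparing with $\overline{u'}=\lambda\overline{u}$ yields $\overline\lambda\,\overline{u}=\lambda\,\overline{u}$, so $\lambda\in\R$ as soon as $u\neq 0$. The degenerate case $u=0$ is precisely $a=c=0$, i.e. $D=b^2$ a square, which does not occur for the non-square discriminants at hand; I would dispatch it separately, the matrices being diagonal. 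Once $\lambda$ is real, splitting $u'=\lambda u$ into real and imaginary parts gives $a'=\lambda a$ and $c'=\lambda c$, so altogether $(a',b',c')=\lambda(a,b,c)$. In other words, having the same eigenvectors is equivalent to the triples being $\Q$-proportional.

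It then remains to repackage $\Q$-proportionality as conditions (i) and (ii). Since all entries are integral, $\lambda\in\Q^\times$; writing $(a,b,c)$ as $n$ times a primitive triple $(a_0,b_0,c_0)$ forces $(a',b',c')=n'(a_0,b_0,c_0)$, which is condition (ii) with $D_0=a_0^2+b_0^2+c_0^2$, and gives $D=n^2D_0$, $D'=n'^2D_0$. Pulling out $g=\gcd(n,n')$ and setting $E=g^2D_0$, $m=n/g$, $m'=n'/g$ produces condition (i). The converse direction is immediate: if (ii) holds the triples are proportional, so the matrices are scalar multiples and share eigenvectors. I expect the main nuisance to be the bookkeeping verifying that $E$ is again a genuine discriminant and that $\gcd(m,m')=1$; this is the only place where the arithmetic of $D\equiv 0,1\bmod 4$ and its square factors really intervenes, the rest of the argument being formal.

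Finally, the "in particular" assertions follow by specialising. If $D=D'$ then $D'=\lambda^2 D$ forces $\lambda^2=1$, hence $\lambda=\pm 1$ and $(a',b',c')=\pm(a,b,c)$. The two displayed identities are a direct substitution into \autoref{eq:eigenvectors}: replacing $(a,b,c)$ by $(-a,-b,-c)$ leaves $D$ (and hence $\sqrt{D}$) unchanged, negates the numerator $a-c\i$, and turns the denominator $b-\sqrt{D}$ of $\omega^-$ into $-(b+\sqrt{D})$; the two sign changes cancel, giving $\omega(-a,-b,-c)^-=\omega(a,b,c)^+$, and symmetrically $\omega(-a,-b,-c)^+=\omega(a,b,c)^-$.
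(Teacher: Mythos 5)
Your argument is correct, and it establishes the crux of the lemma --- that sharing both eigenlines forces the triples $(a,b,c)$ and $(a',b',c')$ to be rational multiples of one another --- by a different mechanism from the paper's. The paper works directly with the first coordinates of the normalised eigenvectors: it equates $\tfrac{a-c\i}{b+\sqrt{D}}$ with $\tfrac{a'-c'\i}{b'\pm\sqrt{D'}}$, deduces that $DD'$ must be a square, and then compares coefficients of $1$ and $\sqrt{E}$ (using that $E$ is not a square) to obtain $am'=\pm a'm$, $ab'=a'b$, and so on, from which the common triple is extracted. You instead invoke the linear-algebra fact that two trace-zero $2\times 2$ matrices with the same two distinct eigenlines are proportional, and then read off from the shape $\left(\begin{smallmatrix}b&u\\2\overline{u}&-b\end{smallmatrix}\right)$ of the analytic representation that the proportionality constant is real; this replaces the paper's irrationality bookkeeping with a single conjugation argument, and you only need the explicit eigenvector formula for the two displayed identities at the end. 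Both proofs rely on $D$ being a non-square to exclude the degenerate case $a=c=0$, and both leave essentially unverified the minor point that $E$ is itself a discriminant (i.e.\ $\equiv 0,1\bmod 4$); your acknowledgement of that bookkeeping is fair, since the paper asserts it without proof as well. The remaining steps --- integrality forcing $\lambda n\in\Z$, the repackaging into conditions (i) and (ii), and the specialisation $D=D'$ giving $\lambda=\pm 1$ --- coincide with the paper's.
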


\begin{proof}Suppose $A_{\sqrt{D}}(a,b,c)$ and $A_{\sqrt{D'}}(a',b',c')$ have the same eigenvectors, so that
    \[\dfrac{a-c\,\i}{b+\sqrt{D}} = \dfrac{a'-c'\,\i}{b'\pm \sqrt{D'}}\,.\]

This immediately implies that there has to be some discriminant $E$ such that $D=m^{2}E$ and $D'=m'^{2}E$, where we choose~$\gcd(m,m')=1$.

The equality above is equivalent to
    \begin{align*}
    ab' \pm am'\sqrt{E} & = a'b+a'm\sqrt{E} \\
    cb' \pm cm'\sqrt{E} & = c'b+c'm\sqrt{E} \,.
    \end{align*}
Since $E$ is not a square, this means $am' = \pm a'm$, $ab' = a'b$, $cm' = \pm c'm$ and $cb' = c'b$. Since $m$ and $m'$ are coprime we have
    \begin{align*}
    a &= ma_{0}\,, & b&=mb_{0}\,, & c&=mc_{0}\,,\quad\mbox{and} \\
    a' &= \pm m'a_{0}\,, & b'&= \pm m'b_{0}\,, & c'&= \pm m'c_{0}\,.
    \end{align*}
for some triple $(a_{0},b_{0},c_{0})\in\Z^{3}$. Dividing both sides of $a^{2}+b^{2}+c^{2}=D$ by $m^{2}$, we obtain $a_{0}^{2}+b_{0}^{2}+c_{0}^{2}=E$.

The converse is immediate.
\end{proof}

\begin{lemma}\label{lem:0mod4} Suppose $\PX$ admits real multiplication by $\O_{D}$. Then~$D\equiv 0\bmod{4}$.

Moreover, there is a bijection between the choices of real multiplication $\O_{D}\hookrightarrow \End\PX$ and the choices of triples $(a,b,c)$ as in~\autoref{lem:sqrtDforb2}.
\end{lemma}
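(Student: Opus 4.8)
The plan is to reduce an embedding $\iota\colon\O_D\hookrightarrow\End\PX$ to the single endomorphism $\iota(\sqrt D)$ and then to read off the congruence $D\equiv 0\bmod 4$ from the integrality of $\iota(T)$, where $T$ generates $\O_D$.

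First I would note that, whichever residue $D\equiv 0$ or $1\bmod 4$ holds, we have $\sqrt D\in\O_D$ (indeed $\sqrt D=2T$ or $\sqrt D=2T-1$ for the respective generators $T$ listed at the start of this section). Hence a self-adjoint embedding $\iota$ sends $\sqrt D$ to a self-adjoint endomorphism $\iota(\sqrt D)\in\End\PX$ with $\iota(\sqrt D)^2=D$. By \autoref{lem:sqrtDforb2} this forces $\iota(\sqrt D)=A_{\sqrt D}(a,b,c)$ for a unique integer triple with $a^2+b^2+c^2=D$; uniqueness holds because $A_\gamma,A_\delta,\i A_{\gamma\delta}$ are $\Q$-linearly independent in $\End_\Q\PX\cong M_2(\Q[\i])$ by \autoref{EndPX}. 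The whole point is then that $\iota$ exists precisely when, in addition, $\iota(T)$ is integral.

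For the congruence, observe that $\iota(T)$ differs from $\tfrac12\iota(\sqrt D)$ by a rational scalar, so its rational representation differs from $\tfrac12 R_{\sqrt D}(a,b,c)$ only on the diagonal. In particular $\iota(T)$ can be integral only if every off-diagonal entry of $R_{\sqrt D}(a,b,c)$ is even; reading off the entries in positions $(2,1)$, $(4,1)$ and $(4,3)$ of the matrix in \autoref{lem:sqrtDforb2} forces $a$, $b$ and $c$ each to be even. Consequently $D=a^2+b^2+c^2\equiv 0\bmod 4$, which is the first assertion. (The residue $D\equiv 1\bmod 4$ is excluded a second time by the diagonal, which would simultaneously require $a+b+c$ to be odd.)

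Finally, for the bijection I would run the argument backwards, the key point being that for $D\equiv 0\bmod 4$ the evenness of $a,b,c$ is automatic: a square is $0$ or $1\bmod 4$, so $a^2+b^2+c^2\equiv 0\bmod 4$ forces all three of $a,b,c$ to be even. Thus, given any triple with $a^2+b^2+c^2=D$, the endomorphism $\tfrac12 A_{\sqrt D}(a,b,c)$ has integral rational representation $\tfrac12 R_{\sqrt D}(a,b,c)$, is self-adjoint by \autoref{lem:sqrtDforb2}, and squares to $\tfrac14 D=T^2$; hence $T\mapsto\tfrac12 A_{\sqrt D}(a,b,c)$ defines a self-adjoint embedding $\O_D\hookrightarrow\End\PX$. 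Together with the assignment $\iota\mapsto(a,b,c)$ of the first step these are mutually inverse, establishing the bijection. The one genuinely illuminating step, rather than a technical obstacle, is recognising that the extra divisibility needed to integrate the generator $T$ is furnished for free by the congruence $D\equiv 0\bmod 4$, so that no constraint beyond those of \autoref{lem:sqrtDforb2} survives.
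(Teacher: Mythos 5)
Your argument is correct and follows essentially the same route as the paper: both reduce to the integrality of the rational representation of $T$, which is $R_{\sqrt D}(a,b,c)/2$ or $(\mathrm{Id}+R_{\sqrt D}(a,b,c))/2$ according to the residue of $D$, and then a parity check shows $a,b,c$ must all be even, forcing $D\equiv 0\bmod 4$, with the converse direction supplied for free by the fact that $a^2+b^2+c^2\equiv 0\bmod 4$ already forces $a,b,c$ even. Your observation that the off-diagonal entries settle both congruence classes simultaneously, and your use of linear independence of $A_\gamma, A_\delta, \i A_{\gamma\delta}$ in place of the paper's appeal to \autoref{lem:sameeigen2} for injectivity, are minor streamlinings of the same proof.
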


\begin{proof}Let $\O_{D}\hookrightarrow \End\PX$ be a choice of real multiplication. The rational representation $R_T$ of the element $T\in\O_{D}$ will be given by $R_{\sqrt{D}}(a,b,c)/2$ or $(\mathrm{Id}+R_{\sqrt{D}}(a,b,c))/2$ for some $(a,b,c)$ satisfying the conditions of~\autoref{lem:sqrtDforb2}, depending on whether $D\equiv 0$ or $1\bmod{4}$ respectively. Therefore
    \[R_{T}(a,b,c)=\left\{\begin{array}{ll}
    \begin{pmatrix}
    \frac{a+b+c}{2} & -c & 0 & a+c \\
    \frac{a}{2} & -\frac{a+b+c}{2} & -\frac{a+c}{2} & 0 \\
    0 & b & \frac{a+b+c}{2} & a \\
    -\frac{b}{2} & 0 & -\frac{c}{2} & -\frac{a+b+c}{2}
    \end{pmatrix}
    \,, & \mbox{if $D\equiv 0\bmod{4}$},\\
    & \\
    \begin{pmatrix}
    \frac{1+a+b+c}{2} & -c & 0 & a+c \\
    \frac{a}{2} & \frac{1-a-b-c}{2} & -\frac{a+c}{2} & 0 \\
    0 & b & \frac{1+a+b+c}{2} & a \\
    -\frac{b}{2} & 0 & -\frac{c}{2} & \frac{1-a-b-c}{2}
    \end{pmatrix}
    \,, & \mbox{if $D\equiv 1\bmod{4}$}.\\
    \end{array}
    \right.\]

A simple parity check shows that $R_{T}(a,b,c)$ is always integral for $D\equiv 0\bmod{4}$ and never integral for~$D\equiv 1\bmod{4}$.

Conversely, every choice of $(a,b,c)$ gives a different embedding $\O_{D}\hookrightarrow \End\PX$ by~\autoref{lem:sameeigen2}.
\end{proof}

\begin{lemma}\label{lem:RMforC4} Let $D\equiv 0\bmod{4}$ be a discriminant with conductor $f_{0}$. A form $\omega$ is an eigenform for real multiplication by $\O_{D}$ if and only if it is the eigenform of some $A_{\sqrt{D}}(a,b,c)$ with~$\gcd(a,b,c,f_{0})=1$.
\end{lemma}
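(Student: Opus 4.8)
The plan is to combine the dictionary of \autoref{lem:0mod4} with the eigenvector computation \eqref{eq:eigenvectors} and reduce the statement to a properness criterion phrased through the conductor. By \autoref{lem:0mod4} every real multiplication $\O_D\hookrightarrow\End\PX$ is realised by a triple $(a,b,c)$ with $a^2+b^2+c^2=D$ via $\sqrt D\mapsto A_{\sqrt D}(a,b,c)$, and by \autoref{lem:sqrtDforb2} the forms fixed by this action are exactly the eigenvectors $\omega(a,b,c)^{\pm}$ of \eqref{eq:eigenvectors}. Moreover \autoref{lem:sameeigen2} shows that, for fixed $D$, the eigenform determines the triple up to a global sign; hence the condition ``$\omega=\omega(a,b,c)^{\pm}$ for some triple with $\gcd(a,b,c,f_0)=1$'' is intrinsic to $\omega$, and I only have to decide it for the essentially unique triple attached to $\omega$. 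The real content is therefore to show that this triple yields a \emph{proper} embedding precisely when $\gcd(a,b,c,f_0)=1$, at which point \autoref{thm:orbicrit} identifies $(\X_t,\omega)$ with an honest orbifold point.

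Next I would translate properness into an integrality statement. An embedding $\O_D\hookrightarrow\End\PX$ extends to a strictly larger quadratic order in $\Q(\sqrt D)$ exactly when, for some prime $p\mid f_0$, it extends to $\O_{D/p^2}$; and since by \autoref{lem:0mod4} \emph{any} order acting on $\PX$ is $\equiv 0\bmod 4$, it suffices to test those $p$ for which $D/p^{2}\equiv 0\bmod 4$. Using the explicit rational representation $R_{\sqrt D}(a,b,c)$ of \autoref{lem:sqrtDforb2} (equivalently $R_T(a,b,c)$ of \autoref{lem:0mod4}), the extension to $\O_{D/p^2}$ exists iff the rational representation of the corresponding generator is integral, which I would reduce to a divisibility condition on $(a,b,c)$. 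For an odd prime $p\mid f_0$ this reads $p\mid\gcd(a,b,c)$, i.e. $p\mid\gcd(a,b,c,f_0)$, so the odd part of the criterion falls out cleanly: non-extendability at all odd primes is equivalent to the odd part of $\gcd(a,b,c,f_0)$ being $1$.

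The main obstacle is the prime $p=2$, where the parity bookkeeping must be done carefully. Because $D\equiv 0\bmod 4$ forces $a,b,c$ all even and the generator of $\O_D$ is $T=\sqrt D/2$, the factor of $2$ already sits inside $R_T=R_{\sqrt D}(a,b,c)/2$, so testing extendability at $2$ is not simply ``$2\mid\gcd(a,b,c)$'': it involves both $v_2(\gcd(a,b,c))$ and the residue of $D/4$ modulo $4$, the latter deciding whether the larger order is generated by $\sqrt{D/p^2}/2$ or by $(1+\sqrt{D/p^2})/2$ --- and the second type never lifts to $\End\PX$ by the parity check in the proof of \autoref{lem:0mod4}. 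Writing $D=f_0^2D_K$ with $D_K$ the fundamental discriminant, I would split into the cases $v_2(D_K)\in\{0,2,3\}$ and in each compare $v_2(\gcd(a,b,c))$ against the available power-of-two extension and against $v_2(f_0)$, concluding that non-extendability at $2$ matches the $2$-part of the condition $\gcd(a,b,c,f_0)=1$. Combining this with the odd-prime analysis gives properness $\Leftrightarrow\gcd(a,b,c,f_0)=1$, and a final appeal to \autoref{lem:sameeigen2} to pass between the triple and its eigenform completes the argument.
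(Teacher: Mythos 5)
Your overall route is the paper's: reduce to triples via \autoref{lem:0mod4}, use \autoref{lem:sameeigen2} to see that the eigenform determines the triple up to sign and that extensions of the embedding correspond to dividing the triple by $m\mid f_0$, and then translate properness into a divisibility statement checked prime by prime. The paper's own proof is in fact shorter than yours --- it simply cites \autoref{lem:sameeigen2} and asserts that properness is equivalent to $\gcd(a,b,c,f_0)=1$ --- and your instinct that the odd primes are harmless while $p=2$ needs real bookkeeping is exactly right: for odd $p\mid f_0$ one has $D/p^2\equiv 0\bmod 4$, the generator of $\O_{D/p^2}$ maps to $\frac{1}{2}A_{\sqrt{D/p^2}}(a/p,b/p,c/p)$, and integrality of its rational representation is equivalent to $p\mid\gcd(a,b,c)$.

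The gap is precisely the step you defer. The claim that ``non-extendability at $2$ matches the $2$-part of the condition $\gcd(a,b,c,f_0)=1$'' is false, and the case distinction you outline would refute it rather than confirm it. Since $D\equiv 0\bmod 4$ forces $a,b,c$ all even, the condition $\gcd(a,b,c,f_0)=1$ fails outright whenever $2\mid f_0$; but, by your own observation, when $D/4\equiv 1\bmod 4$ the only candidate extension at $2$ has generator of type $(1+\sqrt{D/4})/2$, which never lifts, so the embedding is proper at $2$ regardless. Concretely, take $D=20$, $f_0=2$, $(a,b,c)=(4,2,0)$: here $\gcd(a,b,c,f_0)=2$, yet the only larger order is $\O_{5}$, whose generator would have rational representation $(\mathrm{Id}+R_{\sqrt{5}}(2,1,0))/2$, which is not integral; the embedding of $\O_{20}$ is therefore proper, and indeed $\W[D=20,4]$ has an orbifold point of order $2$ (see \autoref{tab:thetable} and \cite[Ex. 4.4]{moellerprym}). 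What your prime-by-prime analysis actually yields is that properness is equivalent to $\gcd(a,b,c)=2$ --- equivalently, the odd part of $\gcd(a,b,c)$ is $1$ and $D/4\not\equiv 0\bmod 4$ --- and this coincides with $\gcd(a,b,c,f_0)=1$ only when $f_0$ is odd. So the $2$-adic case analysis cannot be waved through: carried out honestly it shows that the criterion you are asked to prove must be restated (in a form consistent with the paper's own table), and your write-up as it stands asserts the one outcome that the computation does not deliver.
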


\begin{proof}By the previous lemma, any choice of real multiplication corresponds to a triple $(a,b,c)\in\Z^{3}$ as in~\autoref{lem:sqrtDforb2}.

By~\autoref{lem:sameeigen2}, such an embedding $\O_{D}\hookrightarrow \End\PX$, $T\mapsto A_{D}(a,b,c)\coloneqq A_{\sqrt{D}}(a,b,c)/2$ is proper if and only if~$\gcd(a,b,c,f_{0})=1$.
\end{proof}

\begin{proof}[Proof of~\autoref{thm:orbpt2and4}]By~\autoref{lem:RMforC4}, the set $\Hcal_{2}(D)$ counts choices of proper real multiplication $\O_{D}\hookrightarrow \End\PX$. Since every tuple $(a,b,c)\in\Hcal_2(D)$ gives two eigenforms and, by \autoref{lem:sameeigen2}, $(a,b,c)$ and $(-a,-b,-c)$ give the same eigenforms, there are exactly $|\Hcal_{2}(D)|$ eigenforms for real multiplication in $\PX$ for each $D\equiv 0\bmod{4}$, up to scaling. By~\cite[Prop. 4.6]{moellerprym}, each of them corresponds precisely to one element in some~$\P\Omega(\X_{t})^{-}(4)$. Recall also that, for each $t\in\P^*$, the isomorphism induced by the matrix $Q_{t}$, defined in~\autoref{eq:Qt}, allows us to see the four differentials of $\X_{t}$ given by~\autoref{lem:4foldforms} in the basis of differentials associated to~$\PrymX$.

In the case $D=8$, one has \[|\Hcal_{2}(8)|=|\{(\pm2,\pm2,0),(\pm2,0,\pm2),(0,\pm2,\pm2)\}|=12.\]
Using $Q_t$, it is easy to see that the eigenforms associated to the elements of $\Hcal_{2}(8)$ correspond to the elements of $\P\Omega(\X_{2})^{-}(4)$. More precisely, these eigenforms coincide, up to scaling, with the images $Q_{t}(\omegaX_{1})$, $Q_{t}(\omegaX_{2})$, $Q_{t}(-\omegaX_{1}+\omegaX_{2})$ and $Q_{t}(-t\omegaX_{1}+\omegaX_{2})$, for $t=-1,\nicefrac{1}{2},2$  (recall that $\X_{2}\cong\X_{-1}\cong\X_{\nicefrac{1}{2}}$). For example, by~\autoref{eq:eigenvectors} the matrix $A_{\sqrt{8}}(2,2,0)$ has as an eigenvector $\bigl(\frac{1-\i}{-2-\sqrt{8}},1\bigr)$, which is a multiple of $Q_{2}(\omegaX_{1})$.
As a consequence of \autoref{fermateigen}, the curve $\W[D=8]$ has one orbifold point of order 4 and no orbifold points of order 2. In particular, no other $\W$ can contain a point of order $4$.

Arguing the same way for $D=12$ and using \autoref{lem:actionbeta}, one finds the (unique) orbifold point of order 6 on~$\W[D=12]$ in accordance with \autoref{thm:orbpt3and6}.

Now, let $D\neq8,12$. By \autoref{thm:orbicrit}, we know that $\X_{t}\not\cong\X_{2}$.
As, by \autoref{lem:4foldforms}, for each $t\in\P^{*}$ the set $\P\Omega(\X_{t})^{-}(4)$ has four elements and the map $t\mapsto\X_t$ is generically $6:1$ (cf. \autoref{lem:families}), we have to divide $|\Hcal_{2}(D)|$ by $4\cdot 6=24$ to get the correct number of orbifold points.
\end{proof}
%%%%%%%%%%%%%%%%%%%%%%%%

\subsection{Points of order 3 and 6}

\begin{theorem}\label{thm:orbpt3and6}The curve $\W[D=12]$ has one orbifold point of order 6. Moreover, no curve $\W$ has orbifold points of order 6.

Let $D\neq 12$ be a discriminant with conductor $f_{0}$. The number of orbifold points of order 3 in $\W$ is the generalised class number $e_{3}(D)$ defined above.
\end{theorem}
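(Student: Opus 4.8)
The plan is to mirror the structure of the proof of \autoref{thm:orbpt2and4}, replacing the constant Prym variety $\PX$ by the one-parameter family $\PY$ lying on the Shimura curve of \autoref{prop:shimura}. By \autoref{thm:orbicrit}(2), together with \autoref{thm:families} and \autoref{lem:families}, an orbifold point of order $3$ on $\W$ is exactly the datum of a fibre $\Y_{t}$ with $t\neq\nicefrac{1}{2}$ whose Prym variety admits \emph{proper} real multiplication by $\O_{D}$ having the distinguished $4$-fold-zero form $\omegaY_{2}$ as an eigenform, whereas the single fibre $\Y_{\nicefrac{1}{2}}$ accounts for order $6$. So I must (a) classify the self-adjoint endomorphisms of $\PY$ realising real multiplication by $\O_{D}$, (b) impose the eigenform condition and count the resulting points modulo the identifications in the family, and (c) treat the special fibre $\Y_{\nicefrac{1}{2}}$ separately.

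For (a) I would run Bolza's method as in \autoref{subsec:EndPY}, using $\End_{\Q}\PY\cong\left(\frac{2,-3}{\Q}\right)$, the maximal order $\mathfrak{M}$ of \autoref{eq:maximalorder}, and the Rosati involution of \autoref{eq:C6Rosati}. Writing a general candidate square root $A$ of $D$ in its rational representation over the $\mathfrak{M}$-basis and imposing self-adjointness, integrality, and $A^{2}=D$ should, after a parity analysis in the spirit of \autoref{lem:0mod4}, cut out the admissible triples; properness of the embedding $\O_{D}\hookrightarrow\End\PY$ should translate into $\gcd(a,b,c,f_{0})=1$ exactly as in \autoref{lem:RMforC4}, and the analogue of \autoref{lem:sameeigen2} should recover $(a,b,c)$ from the eigenvector up to the relevant equivalence. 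The essential new feature, compared with the \Xfamily{}, is the eigenform condition itself: by \autoref{lem:4foldforms}(2) the form $\omegaY_{2}$ is the \emph{only} element of $\P\Omega(\Y_{t})^{-}$ with a $4$-fold zero, so requiring it to be the eigenform is a genuine restriction (rather than being automatically satisfied by a whole $\Aut$-orbit as in the \X case). It is this coupling between the endomorphism and the modulus that upgrades the binary data of a self-adjoint square root of $D$ to the three-variable \emph{indefinite} form $2a^{2}-3b^{2}-c^{2}=2D$, the indefiniteness reflecting that $\PY$ sweeps out a genuine curve rather than a point.

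For (b) the eigenform condition selects, for each admissible triple, a value of the period $f=f(t)$ and hence a point on the Shimura curve $\HH/\Delta(2,6,6)$ made explicit in \autoref{prop:prymc6} and \autoref{prop:trianglegroup}. Since the form is indefinite the raw solution set is infinite, so the core of the argument is to show that the inequalities defining $\Hcal_{3}(D)$, namely $-3\sqrt{D}<a<-\sqrt{D}$, $c<b\le 0$, and the tie-breaking condition on $4a-3b-3c$, single out precisely one representative from each orbit under the action induced by $\Delta(2,6,6)$, so that $|\Hcal_{3}(D)|$ equals the number of intersection points and hence $e_{3}(D)$. I expect this matching — reconciling the arithmetic of the ternary form modulo the norm-one units $\mathfrak{M}^{1}$ with the geometry of the fundamental domain of the triangle group — to be the main obstacle, since it has no counterpart in the constant \Xfamily{} computation and is exactly where the one-dimensional, non-compact nature of the Prym-Torelli image enters; the explicit vertices and reflections of \autoref{prop:trianglegroup} are the tool I would use to pin down the fundamental domain.

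Finally, for (c) I would analyse $\Y_{\nicefrac{1}{2}}\cong\X_{\zeta_{6}}$ via the order-$12$ automorphism $\betaY$ of \autoref{lem:actionbeta}, for which $\omegaY_{2}$ is already an eigenform: determining when the associated endomorphism generates proper real multiplication by $\O_{D}$ should single out $D=12$, yielding the unique order-$6$ point on $\W[D=12]$ and none on any other $\W$. This is the same $D=12$ computation foreshadowed at the end of the proof of \autoref{thm:orbpt2and4}, and excluding this fibre from the order-$3$ tally is precisely why $e_{3}(12)$ is set to $0$, which completes the count.
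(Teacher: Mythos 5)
Your proposal follows essentially the same route as the paper: reduce via \autoref{lem:4foldforms}(2) and self-adjointness to the diagonal analytic representation $\mathrm{diag}(T,-T)$, translate integrality of the rational representation into the ternary form $2a^{2}-3b^{2}-c^{2}=2D$ together with the formula $f=(\sqrt{3}b\i+c)/(2(a-\sqrt{D}))$, show the inequalities in $\Hcal_{3}(D)$ describe exactly the explicit fundamental domain of \autoref{prop:trianglegroup} (including the boundary identification of order $2$), encode properness as $\gcd(a,b,c,f_{0})=1$ via the scaling $f(ga',gb',gc',g^{2}D')=f(a',b',c',D')$, and isolate $D=12$ at the vertex $f(\nicefrac{1}{2})$. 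This matches the paper's \autoref{lem:orbifoldC6}, \autoref{prop:conditionorbpt3} and \autoref{lem:ptsinfunddomain}, so the plan is correct and not a genuinely different argument.
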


In the case of the \Yfamily{} $\Y$ we are, by \autoref{lem:4foldforms}, only interested in the case where $\omegaY_{2}$ is an eigenform for real multiplication. Using the bases constructed in Lemmas~\ref{lem:forms} and~\ref{lem:hombasis}, we get the following.

\begin{lemma}\label{lem:orbifoldC6} The curve $\Y_{t}$ is an orbifold point of $\W$ if and only if the matrix
    \[A_{D}\coloneqq
    \begin{pmatrix}T&0\\0&-T\\\end{pmatrix}\] 
is the analytic representation of an endomorphism of $\PY$ and $A_{D'}$ is not for all discriminants $D'$ dividing~$D$.

The orbifold order of $\Y_{t}$ is 6 if $\Y_{t}\cong\Y_{\nicefrac{1}{2}}$ and 3 otherwise.
\end{lemma}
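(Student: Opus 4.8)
The plan is to deduce the lemma directly from the orbifold criterion \autoref{thm:orbicrit}(2). That proposition already identifies the orbifold points of $\W$ lying on the \Yfamily{} with those fibres $\Y_{t}$ whose Prym variety $\PY$ carries \emph{proper} real multiplication by $\O_{D}$ admitting $\omegaY_{2}$ as an eigenform, and it records the orbifold order (namely $6$ precisely when $\Y_{t}\cong\Y_{\nicefrac{1}{2}}$ and $3$ otherwise). Since by \autoref{lem:4foldforms}(2) the form $\omegaY_{2}$ is, for $t\neq\nicefrac{1}{2}$, the unique $\rhoY$-anti-invariant form with a $4$-fold zero at a fixed point of the Prym involution, the entire content of the lemma is to re-express the condition \enquote{proper real multiplication by $\O_{D}$ with eigenform $\omegaY_{2}$} as the stated condition on the matrix $A_{D}$, after which the assertion about the order is immediate from \autoref{thm:orbicrit}(2) and \autoref{lem:families}(2).

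The main step is to show that the generator $T$ of $\O_{D}$ must act on $\Omega(\Y_{t})^{-}=\langle\omegaY_{1},\omegaY_{2}\rangle$ through the diagonal matrix $A_{D}$. By \autoref{lem:forms} the automorphism $\alphaY$ acts on this eigenspace by $\mathrm{diag}(\zeta_{6}^{-1},\zeta_{6})$, and since $\alphaY$ is a finite-order automorphism of the polarised Prym variety, its analytic representation is unitary for the positive-definite Hermitian form $H$ defining the $(1,2)$-polarisation of \autoref{prop:prymc6}. As $\zeta_{6}^{-1}\neq\zeta_{6}$, the two eigenlines $\langle\omegaY_{1}\rangle$ and $\langle\omegaY_{2}\rangle$ are therefore $H$-orthogonal. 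Consequently any self-adjoint endomorphism having $\omegaY_{2}$ as an eigenvector also preserves the orthogonal complement $\langle\omegaY_{1}\rangle$, so its analytic representation is diagonal; self-adjointness makes the eigenvalues real, and the requirement that it generate $\O_{D}$ forces these eigenvalues to be the two real embeddings of $T$, which is exactly $A_{D}$. Conversely, if $A_{D}$ is the analytic representation of an endomorphism $\phi$, then $\Z[\phi]\cong\O_{D}$ and, being real-diagonal with respect to the diagonal form $H$, this embedding is self-adjoint and has $\omegaY_{2}$ as an eigenform.

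It remains to interpret \enquote{endomorphism} and \enquote{proper}. An endomorphism of $\PY=\C^{2}/\Lambda_{t}$ is determined by its analytic representation and exists precisely when the latter preserves $\Lambda_{t}$, equivalently when the associated rational representation is integral; applying the Bolza identity of \autoref{prop:prymc6} in the bases of \autoref{lem:forms} and \autoref{lem:hombasis}, this is exactly the statement that $A_{D}$ \enquote{is the analytic representation of an endomorphism} (a condition on $t$ through the period $f=f(t)$). Finally, the real multiplication $\O_{D}\hookrightarrow\End\PY$ is proper if and only if it does not extend to any strictly larger order $\O_{D'}\supsetneq\O_{D}$, that is, to any discriminant $D'$ with $D=m^{2}D'$ and $m>1$; such an extension is witnessed precisely by the generator of $\O_{D'}$, i.e.\ by $A_{D'}$, being an endomorphism. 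Hence properness is equivalent to requiring that $A_{D'}$ fail to be an endomorphism for every discriminant $D'$ dividing $D$, and combining these equivalences with \autoref{thm:orbicrit}(2) and \autoref{lem:families}(2) yields the lemma.

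I expect the crux to be the diagonalisation argument of the second paragraph: upgrading the single eigenform condition on $\omegaY_{2}$ to the full diagonal shape $A_{D}$ of the analytic representation. This hinges on the $H$-orthogonality of $\omegaY_{1}$ and $\omegaY_{2}$, which relies on $\alphaY$ acting unitarily with the distinct eigenvalues $\zeta_{6}^{\pm 1}$; some care will be needed to verify that the diagonal entries are genuinely the two real embeddings of the chosen generator $T$ and to match the even- and odd-discriminant normalisations of $T$ with the displayed matrix $A_{D}$.
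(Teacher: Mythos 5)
Your proof is correct and follows essentially the same route as the paper's: reduce via \autoref{thm:orbicrit} and \autoref{lem:4foldforms} to the condition that $\PY$ carry proper real multiplication by $\O_D$ with $\omegaY_{2}$ as eigenform, show that the generator must then act through a diagonal analytic representation, and translate properness into the failure of $A_{D'}$ to be an endomorphism for every discriminant $D'$ dividing $D$. The one step you handle differently is the diagonalisation: the paper writes the analytic representation as $\left(\begin{smallmatrix}T&0\\ \gamma &-T\end{smallmatrix}\right)$ and kills $\gamma$ by plugging into the explicit Rosati formula of \autoref{eq:C6involutionM2C}, whereas you deduce the $H$-orthogonality of $\langle\omegaY_{1}\rangle$ and $\langle\omegaY_{2}\rangle$ from the unitarity of $\alphaY$ together with the distinctness of its eigenvalues $\zeta_6^{\pm1}$; this carries the same content, since the diagonality of the Hermitian form is precisely what \autoref{eq:C6involutionM2C} records, but your version is independent of that computation. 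Your closing caveat is well founded and is in fact a looseness of the paper itself: for $D\equiv 1\bmod 4$ the Galois conjugate of $T$ is $1-T$ rather than $-T$, so the matrix genuinely realising the generator (and the one whose rational representation $(\mathrm{Id}+R_{\sqrt D})/2$ is computed in \autoref{prop:conditionorbpt3}) is $\mathrm{diag}(T,1-T)$, and the displayed $A_D$ must be read in that sense; with that normalisation your identification $\Z[\phi]\cong\O_D$ in the converse direction goes through.
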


\begin{proof}The form $\omega_{2}$ is an eigenform for real multiplication by $\O_{D}$ on $\PY$ if and only if there is a matrix $\left(\begin{smallmatrix}T&0\\ \gamma &-T\\\end{smallmatrix}\right)$ for some $\gamma \in\C$ representing a self-adjoint endomorphism of $\PY$ and, moreover, the corresponding action of $\O_{D}$ is proper. By the explicit description of the Rosati involution in this basis~\autoref{eq:C6involutionM2C}, the self-adjoint condition implies $\gamma =0$. Moreover, the action of $\O_{D}$ is proper if and only if $A_{D'}$ does not induce an endomorphism for every discriminant~$D'|D$.

The claim about the orbifold order follows from \autoref{thm:families} and~\autoref{thm:orbicrit}.
\end{proof}

Using the period matrix $\PY$ we can compute the rational representation $R_{D}$ for such an $A_{D}$ in terms of $f$ and find conditions for $R_{D}$ to be integral. Remember that the parameter $f=f(t)$ lives in the disc of radius~$\nicefrac{1}{\sqrt{2}}$.

\begin{prop}\label{prop:conditionorbpt3}Let $f\in\C$ such that $|f|^2<\nicefrac{1}{2}$ and let $\PY$ be as above. The matrix $A_{D}$ induces a self-adjoint endomorphism of the corresponding Prym variety if and only if there exist integers $a,b,c\in\Z$ such that
    \begin{enumerate}[label=\emph{(\roman*)}]
    \item $2a^{2}-3b^{2}-c^{2}=2D$, and
    \item $f=f(a,b,c,D)\coloneqq \dfrac{\sqrt{3}b\i + c}{2(a-\sqrt{D})}$.
    \end{enumerate}
\end{prop}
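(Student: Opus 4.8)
The plan is to translate the statement ``$A_D$ is the analytic representation of an endomorphism'' into the integrality of a rational representation, and then to read off the conditions on $f$ from the explicit period matrix $\Pi\coloneqq\PrymY[t]$ of \autoref{prop:prymc6}. By the defining identity relating analytic and rational representations (Bolza's method), $A_D$ induces an endomorphism of $\PY=\C^2/\Lambda_t$ if and only if there is an integral matrix $R\in M_4(\Z)$ with $A_D\Pi=\Pi R$. Since $A_D$ is real and diagonal it is automatically fixed by the Rosati involution~\eqref{eq:C6involutionM2C}, so any such endomorphism is self-adjoint, matching \autoref{lem:orbifoldC6}. Moreover $A_D$ represents $\sqrt D\in\O_D$, so $A_D^2=D\cdot\mathrm{Id}$; hence $R$ is a rational matrix with $R^2=D\cdot\mathrm{Id}$, whose eigenvalues are $\pm\sqrt D$, each of multiplicity two.

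First I would exploit the rowwise form of $A_D\Pi=\Pi R$. Writing the two rows of $\Pi$ as $u_1,u_2$, the equation says precisely that $u_1$ and $u_2$ are left eigenvectors of $R$ for the eigenvalues $+\sqrt D$ and $-\sqrt D$; as $R$ is real, $\overline{u_1},\overline{u_2}$ are eigenvectors as well. Thus the $+\sqrt D$-eigenspace $V_+=\langle u_1,\overline{u_1}\rangle_\C$, being the eigenspace of a rational matrix for the eigenvalue $\sqrt D$, must be defined over $\Q(\sqrt D)$ (its Galois conjugate being $V_-$). Computing the $2\times 2$ minors of $\left(\begin{smallmatrix}u_1\\ \overline{u_1}\end{smallmatrix}\right)$ directly from the entries of $\Pi$ --- these involve $\zeta_6$, $f$ and $\bar f$ --- and dividing through by the minor $\zeta_6-\zeta_6^{-1}=\sqrt{-3}$, one finds that all the resulting ratios lie in $\Q(\sqrt D)$ exactly when $\mathrm{Re}\,f\in\Q(\sqrt D)$ and $\mathrm{Im}\,f\in\sqrt3\,\Q(\sqrt D)$, i.e.\ when $f\in\Q(\sqrt{-3},\sqrt D)$. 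Rewriting this membership over the common denominator $2(a-\sqrt D)$ then produces the shape $f=\tfrac{\sqrt3\,b\,\i+c}{2(a-\sqrt D)}$ of~(ii), with $a,b,c\in\Q$.

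Next I would impose genuine integrality. For $f$ of the above shape $R$ is already rational; requiring that $A_D$ preserve the actual lattice $\Lambda_t=\Pi\,\Z^4$ (equivalently, that $R\in M_4(\Z)$) forces $a,b,c$ to be integers after clearing denominators, and combining $R^2=D\cdot\mathrm{Id}$ with the $(1,2)$-polarisation $\EY$ normalises the parameters by the indefinite ternary relation $2a^2-3b^2-c^2=2D$, which is~(i). Here the coefficients $2$ and $3$ reflect $\mathbf i^2=2$ and $-\mathbf j^2=3$ in $F\cong\left(\frac{2,-3}{\Q}\right)$, while the factor $2$ on the right records the non-principality of the polarisation. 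The converse follows by reversing this computation: given integers $a,b,c$ satisfying~(i) and $f$ as in~(ii), the matrix $R$ read off above is integral and satisfies $A_D\Pi=\Pi R$, so $A_D$ is indeed an endomorphism.

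The main obstacle is the integrality step of the third paragraph. The endomorphism $A_D$ does \emph{not} lie in the generic quaternion order $\mathfrak{M}=\End\PY$ but is a CM-type endomorphism existing only at the special values of $f$ singled out above, so one cannot simply invoke the description of $\mathfrak{M}$; instead one must track, entry by entry and with care for the factors of $2$ introduced by the $(1,2)$-polarisation, exactly which half-integral combinations land in $\Lambda_t$. Getting these factors right is what yields the precise quadric $2a^2-3b^2-c^2=2D$ (rather than an off-by-two variant) and the exact denominator $2(a-\sqrt D)$ in~(ii). By contrast, the eigenspace-rationality computation of the second paragraph is routine once the minors are written out, and I expect it to go through cleanly.
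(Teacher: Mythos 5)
Your overall framework --- reading the condition as the integrality of the rational representation $R$ determined by $A_D\Pi=\Pi R$ with $\Pi=\PrymY[t]$ --- is the same as the paper's, and your Pl\"ucker computation in the second paragraph is correct as far as it goes: rationality of $R$ does force the $+\sqrt D$-eigenplane $\langle u_1,\overline{u_1}\rangle$ to be defined over $\Q(\sqrt D)$, and the normalised minors do give $\Re f\in\Q(\sqrt D)$ and $\Im f\in\sqrt3\,\Q(\sqrt D)$. But this is strictly weaker than the conclusion, and the step where you try to recover condition~(i) is where the argument breaks. The identity $R^2=D\cdot\mathrm{Id}$ holds for \emph{every} $f$ (it is just the image of $A_{\sqrt D}^2=D\cdot\mathrm{Id}$ under conjugation by the period matrix), so it carries no arithmetic information, and the polarisation $\EY$ plays no role here beyond the self-adjointness already settled in \autoref{lem:orbifoldC6}; neither can produce the quadric $2a^2-3b^2-c^2=2D$. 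What actually forces that relation is the requirement that the $-\sqrt D$-eigenplane $\langle u_2,\overline{u_2}\rangle$ be the \emph{Galois conjugate} of the $+\sqrt D$-eigenplane (equivalently, that specific entries of $R$ be rational): comparing the minor ratios of the two planes yields the norm condition $|f|^2=\tfrac12\cdot\tfrac{a+\sqrt D}{a-\sqrt D}$, which together with~(ii) is exactly~(i). The paper gets this directly by computing $R_{\sqrt D}$ entry by entry as a function of $f$, setting $a=B_1$, $b=B_2$, $c=2B_3-B_2$ (integers because $R_{\sqrt D}$ is integral) and solving the explicit formulas for $f$ and $|f|^2$; your field-membership argument alone would admit any $f$ of the shape~(ii) with arbitrary integers $a,b,c$, for which the resulting $R$ is generically irrational.

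The converse direction is also incomplete: given~(i) and~(ii), the matrix $R_T$ representing the generator $T$ of $\O_D$ has entries such as $a/2$ and $(b\pm c)/2$, and its integrality is not automatic --- the paper needs the congruence $2a^2-3b^2-c^2\equiv 2D\bmod 8$ to show that $a$, $b$, $c$ are all even when $D\equiv 0\bmod 4$ (respectively $a$ odd and $b$, $c$ even when $D\equiv 1\bmod 4$). Your proposal asserts that ``the matrix $R$ read off above is integral'' without this parity analysis, which is precisely the half-integer bookkeeping you correctly identify as the crux but do not carry out.
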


\begin{figure}[!htp]
\centering
\includegraphics{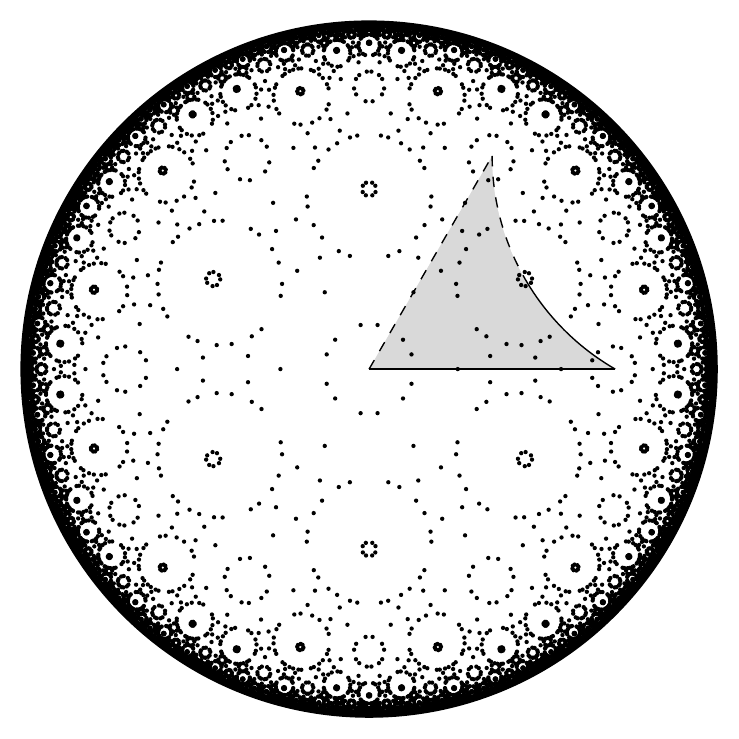}
\caption{Points in the disc of radius $\nicefrac{1}{\sqrt{2}}$ satisfying the conditions of~\autoref{prop:conditionorbpt3} for $D=3257$ together with the fundamental domain of $\Delta(2,6,6)$.}
\label{fig:C6points3257}
\end{figure}

\begin{proof}Given an element of $\End_{\Q}\PY$ with analytic representation $A$, its rational representation $R$ is given by
    \[R= \begin{pmatrix}\PY\\ \overline{\PY}\\\end{pmatrix}^{-1}
    \begin{pmatrix}A & 0 \\ 0 & \overline{A} \\\end{pmatrix}
    \begin{pmatrix}\PY\\ \overline{\PY}\\\end{pmatrix}\,.
    \]

Suppose that $A_{D}$ induces a self-adjoint endomorphism. In particular, the matrix $A_{\sqrt{D}}=\left(\begin{smallmatrix}\sqrt{D}&0\\0&-\sqrt{D}\\\end{smallmatrix}\right)$ also induces an endomorphism and a tedious but straightforward calculation shows that the corresponding rational representation is
    \[R_{\sqrt{D}}=\begin{pmatrix}
    B_{1} & 0 & B_{3} & B_{2} \\
    0 & B_{1} & B_{2} & B_{4} \\
    2B_{4} & -2B_{2} & -B_{1} & 0 \\
    -2B_{2} & 2B_{3} & 0 & -B_{1} \\
    \end{pmatrix}\,,\]
where
    \begin{align*}
    B_{1} &=\frac{\sqrt{D}(2|f|^{2}+1)}{2|f|^{2}-1}\,,\\
    B_{2} &=-\dfrac{2\sqrt{3}\sqrt{D}(|f|^{2}-f^{2})\i}{3f(2|f|^{2}-1)}\,, \\
    B_{3} &= \dfrac{\sqrt{3}\sqrt{D}(|f|^{2}-f^{2})\i}{3f(2|f|^{2}-1)} + \dfrac{\sqrt{D}(|f|^{2}+f^{2})}{f(2|f|^{2}-1)}\quad\text{and}\\
    B_{4} &= \dfrac{\sqrt{3}\sqrt{D}(|f|^{2}-f^{2})\i}{3f(2|f|^{2}-1)} - \dfrac{\sqrt{D}(|f|^{2}+f^{2})}{f(2|f|^{2}-1)}.
    \end{align*}

We define $a\coloneqq B_{1}\in\Z$ and from the expression above we get that
    \begin{equation}\label{eq:normf^2}
    |f|^{2} =\frac{1}{2}\cdot\frac{a+\sqrt{D}}{a-\sqrt{D}}\,.
    \end{equation}

Moreover, since $|f|^{2}-f^{2}=-2\i\cdot f\Im{f}$, $|f|^{2}+f^{2}=2f\Re{f}$ and $2|f|^{2}-1=\frac{2\sqrt{D}}{a-\sqrt{D}}$, the expressions above imply
    \[b \coloneqq B_{2} = \dfrac{2(a-\sqrt{D})\Im(f)}{\sqrt{3}}\quad\mbox{and}\quad
    c \coloneqq 2B_{3}-B_{2} = -2B_{4}+B_{2} = 2(a-\sqrt{D})\Re(f)\,,\]
so that
    \[f = \frac{c+\sqrt{3}b\i}{2(a-\sqrt{D})}\,,\]
and~\autoref{eq:normf^2} implies that~$2a^{2}-3b^{2}-c^{2}=2D$, as claimed.

\medskip

Conversely, suppose that $a,b,c\in\Z$ satisfy the conditions of the proposition and define $f=f(a,b,c,D)$ as above. The rational representation of $A_{D}$ (at the point corresponding to $f$) is given by $R_{T}=R_{\sqrt{D}}/2$ or $(\mathrm{Id}+R_{\sqrt{D}})/2$, depending on whether $D\equiv 0$ or $1\bmod{4}$, respectively, and therefore
    \[R_{T}=\left\{\begin{array}{ll}
    \begin{pmatrix}
    \dfrac{a}{2} & 0 & \dfrac{b+c}{2} & b \\
    0 & \dfrac{a}{2} & b & \dfrac{b-c}{2} \\
    b-c & -2b & -\dfrac{a}{2} & 0 \\
    -2b & b+c & 0 & -\dfrac{a}{2} \\
    \end{pmatrix}
    \,, & \mbox{if $D\equiv 0\bmod{4}$},\\
    & \\
    \begin{pmatrix}
    \dfrac{1+a}{2} & 0 & \dfrac{b+c}{2} & b \\
    0 & \dfrac{1+a}{2} & b & \dfrac{b-c}{2} \\
    b-c & -2b & \dfrac{1-a}{2} & 0 \\
    -2b & b+c & 0 & \dfrac{1-a}{2} \\
    \end{pmatrix}
    \,, & \mbox{if $D\equiv 1\bmod{4}$}.\\
    \end{array}
    \right.\]

Considering the equality $2a^{2}-3b^{2}-c^{2}\equiv 2D\bmod{8}$, one sees that
\begin{itemize}
    \item $a$, $b$ and $c$ are even if $D\equiv 0\bmod{4}$, and
    \item $a$ is odd and $b$ and $c$ are even if $D\equiv 1\bmod{4}$
\end{itemize}
and therefore~$R_{T}\in M_{4}(\Z)$ in both cases.
\end{proof}

To compute the number of orbifold points on $\W$, we now count, for each discriminant $D$, how many points $f(a,b,c,D)$ in the fundamental domain of $\Delta(2,6,6)$ satisfy the previous conditions. Recall from~\autoref{subsec:PrymC6} that we consider the fundamental domain for the triangle group $\Delta(2,6,6)$ depicted in~\autoref{fig:trianglegroup}.

\begin{lemma}\label{lem:ptsinfunddomain}Let $\widetilde{\Hcal}_{3}(D)$ be the set of triples of integers $(a,b,c)$ such that
    \begin{enumerate}[label=\emph{(\roman*)}]
        \item\label{enum:H6_square} $2a^{2}-3b^{2}-c^{2}=2D$;
        \item\label{enum:H6_norm} $-3\sqrt{D}<a<-\sqrt{D}$;
        \item\label{enum:H6_angle} $c < b \le 0$;  
        \item\label{enum:H6_disc} Either $4a - 3b - 3c < 0$, or $4a - 3b - 3c = 0$ and $c < 3b$. 
    \end{enumerate}
The set $\widetilde{\Hcal}_{3}(D)$ agrees with the triples $(a,b,c)$ in~\autoref{prop:conditionorbpt3} that yield a point $f(a,b,c,D)$ in the fundamental domain of~$\Delta(2,6,6)$.
\end{lemma}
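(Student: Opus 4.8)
The plan is to translate the geometric membership condition ``$f(a,b,c,D)$ lies in the fundamental domain $\mathcal F$ of $\Delta(2,6,6)$'' directly into the arithmetic inequalities (ii)--(iv), keeping the quadratic relation (i) as the hypothesis inherited from \autoref{prop:conditionorbpt3}. Throughout I would work with the explicit expression
\[
f=f(a,b,c,D)=\frac{\sqrt{3}\,b\,\i+c}{2(a-\sqrt D)}=\frac{-c-\sqrt3\,b\,\i}{2(\sqrt D-a)},
\]
so that $\Re f=\tfrac{-c}{2(\sqrt D-a)}$ and $\Im f=\tfrac{-\sqrt3\,b}{2(\sqrt D-a)}$, together with the norm identity~\eqref{eq:normf^2}, which reads $2|f|^2=\tfrac{a+\sqrt D}{a-\sqrt D}$.

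First I would pin down $\mathcal F$ explicitly from \autoref{prop:trianglegroup} and \autoref{fig:trianglegroup}: it is bounded by the two hyperbolic geodesics through the order-$6$ vertex $0$ -- namely the radii at arguments $0$ and $\pi/3$, which are ordinary Euclidean segments since they pass through the centre of the disc -- and by the geodesic arc $\Gamma$ through the three remaining boundary vertices $\tfrac12$, $v_2=f(\tfrac12)$ and $\tfrac12\zeta_6$. A short computation identifies $\Gamma$ as the unique circle orthogonal to the boundary circle $|z|=1/\sqrt2$ passing through these points; it has centre $\tfrac34+\tfrac{\sqrt3}{4}\i$ and radius $\tfrac12$, equivalently it is the locus $|f|^2-\tfrac32\Re f-\tfrac{\sqrt3}{2}\Im f+\tfrac12=0$.

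Next I would treat the two families of boundary conditions separately. The angular condition is that $\arg f$ lie in the sector $[0,\pi/3)$ between the two radii; reading off the signs above, $\Im f\ge0\iff b\le0$ and $\Re f>0\iff c<0$, while $\tan(\arg f)=\Im f/\Re f=\sqrt3\,b/c<\sqrt3\iff c<b$ (using $c<0$). This is precisely (iii), where the half-open convention -- include the radius $b=0$, exclude the radius $c=b$ -- records that these two radii are identified by the order-$6$ rotation about $0$. For the outer boundary I would substitute $f$ into the defining form of $\Gamma$, clear the positive denominator $4(\sqrt D-a)^2$, and then use (i) in the guise $3b^2+c^2=2(\sqrt D-a)^2-4\sqrt D(\sqrt D-a)$ to collapse the expression; the quadratic and irrational terms cancel, leaving the clean linear inequality $4a-3b-3c\le0$ for the side of $\Gamma$ containing the origin, which is (iv). The equality case is exactly the points of $\Gamma$, which are identified in pairs by the order-$2$ rotation about $v_2$ (whose fixed point $v_2$ is the locus $c=3b$); the tie-break $c<3b$ then selects one representative from each such pair. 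Finally, \eqref{eq:normf^2} shows $|f|^2<\tfrac12\iff a<-\sqrt D$ and $|f|^2<\tfrac14\iff a>-3\sqrt D$, which together give (ii); since every point of $\mathcal F$ satisfies $|f|\le\tfrac12$, the lower bound is in fact forced by the arc condition, but it is convenient to record it. Assembling the three sets of inequalities exhibits $\{f\in\mathcal F\}$ as exactly $\widetilde{\Hcal}_3(D)$.

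The main obstacle is the outer-boundary step: one must both compute $\Gamma$ correctly (as the \emph{geodesic}, i.e.\ the circle orthogonal to $|z|=1/\sqrt2$, rather than a naive Euclidean chord) and then exploit the quadratic relation (i) to reduce an a priori irrational, quadratic inequality in $(a,b,c)$ to the integral linear form $4a-3b-3c\le0$. Equally delicate is the bookkeeping of the boundary identifications: getting the half-open conventions in (iii) and the tie-break $c<3b$ in (iv) exactly right is what guarantees that each $\Delta(2,6,6)$-orbit, and hence each orbifold point, is counted once and only once.
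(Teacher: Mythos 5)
Your proposal is correct and follows essentially the same route as the paper: identify the fundamental domain from \autoref{prop:trianglegroup}, translate the radial bound $|f|^2<\nicefrac14$ into (ii) via \autoref{eq:normf^2}, the angular sector $0\le\arg f<\nicefrac{\pi}{3}$ into (iii), and membership in the origin-side of the geodesic $|z-(3+\sqrt3\i)/4|^2=\nicefrac14$ into (iv) by expanding and using the quadratic relation (i), with the order-$6$ and order-$2$ boundary identifications fixing the half-open conventions. Your write-up is in fact somewhat more explicit than the paper's (which leaves the expansion and the tie-break computation to the reader), but there is no difference in method.
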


\begin{rem}Note that $\widetilde{\Hcal}_{3}(D)$ agrees with the set $\Hcal_{3}(D)$ defined above except for the condition on the $\gcd$. This condition will ensure that the embedding of $\O_{D}$ into $\End\PY$ is proper.
\end{rem}

\begin{proof} Recall that we are using the fundamental domain depicted in~\autoref{fig:trianglegroup}, whose vertices have been calculated in~\autoref{prop:trianglegroup}. Condition~\ref{enum:H6_norm} ensures that $0\le |f|^{2} \le \nicefrac{1}{4}$ and condition~\ref{enum:H6_angle} that $0\le \arg{f} < \nicefrac{\pi}{3}$. Now, the geodesic joining $f(0)$ and $f(1)$ is an arc of circumference $|z-(3+\sqrt{3}\i)/4|^{2}=\nicefrac{1}{4}$. Therefore, $f$ lives on the (open) half-disc containing the origin, determined by this geodesic, if and only if
	\[ \left|f-\frac{3+\sqrt{3}\i}{4}\right|^{2} = \left(\frac{c}{2(a-\sqrt{D})}-\frac{3}{4}\right)^2+\left(\frac{\sqrt{3}b}{2(a-\sqrt{D})}-\frac{\sqrt{3}}{4}\right)^2
	\ge \frac{1}{4}\,. \]
Expanding this expression and using the previous conditions, one gets the first part of condition~\ref{enum:H6_disc}. Since the sides joining $f(1)$ and $f(\nicefrac{1}{2})$, and $f(\nicefrac{1}{2})$ and $f(0)$ are identified by an element of order 2 in~$\Delta(2,6,6)$, we need to count only the points $f$ that lie on one of them, say the arc of the geodesic joining $f(1)$ and $f(\nicefrac{1}{2})$. Proceeding as before, we obtain the second part of condition~\ref{enum:H6_disc}.
\end{proof}

\begin{proof}[Proof of~\autoref{thm:orbpt3and6}]First note that if $D=g^{2}D'$, then
    \begin{equation}\label{eq:finC6}f(a,b,c,D)=f(a',b',c',D')\quad\mbox{ if and only if }a=ga',\ b=gb' \mbox{ and }c=gc'\,.
    \end{equation}

Since 12 is a fundamental discriminant,~\autoref{lem:orbifoldC6} and~\autoref{lem:ptsinfunddomain} imply that $\W[D=12]$ has one orbifold point of order 6. Moreover, this is the only curve with an orbifold point of order 6 because, by~\autoref{eq:finC6} above, the point $f(a,b,c,D)$ can only correspond to $t=\nicefrac{1}{2}$ if one has $D=f_{0}^{2}D_{0}$ for~$D_{0}=12$.

Now let $D\neq12$. By~\autoref{lem:orbifoldC6} and~\autoref{lem:ptsinfunddomain}, we only need to prove that $\Hcal_{3}(D)$ is the set of triples in $\widetilde{\Hcal}$ which are not contained in any $\widetilde{\Hcal}_{3}(D')$, for discriminants~$D'|D$. This is true since, by~\autoref{eq:finC6}, $(a,b,c)\in \widetilde{\Hcal}_{3}(D)$ is not contained in any $\widetilde{\Hcal}_{3}(D')$ if and only if~$\gcd(a,b,c,f_{0})=1$.
\end{proof}

%%%%%%%%%%%%%%%%%%%%%%%%%
\section{Examples}\label{sec:examples}

\noindent{\bf Example 1} ($\W[D=12]$ and $\W[D=20]$). 
The curve $\W[D=12]$ has genus zero, two cusps and one orbifold point of order 6, and the curve $\W[D=20]$ has genus zero, four cusps and one elliptic point of order 2, cf. \cite[Ex. 4.4]{moellerprym}. Our results agree with this. These are the curves $V(S_{1})$ and $V(S_{2})$ in~\cite{mcmprym}.

\medskip

\noindent{\bf Example 2} ($\W[D=8]$). By \autoref{thm:orbpt2and4} and \autoref{thm:orbpt3and6}, we find that $\W[D=8]$ has one orbifold point of order 3 and one orbifold point of order 4. By~\cite[Thm. C.1]{lanneaunguyen} the number of cusps is $C(\W[D=8])=1$, the curve is connected, and by~\cite[Thm. 0.2]{moellerprym} the Euler characteristic is $\chi(\W[D=8])=-5/12$. We can then use~\autoref{eq:invariants} to compute its genus as $g(\W[D=8])=0$.

\medskip

\noindent{\bf Example 3} ($\W[D=2828]$). \autoref{thm:orbpt2and4} and \autoref{thm:orbpt3and6} also tell us that $\W[D=2828]$ has six orbifold points of order 2. They correspond to the $|\Hcal_{2}(2828)|=144$ eigenforms for real multiplication by $\O_{2828}$ in $\PX$, as in~\autoref{eq:eigenvectors}, divided by 24. In \autoref{fig:example}, we depict the first coordinate of these eigenforms in the complex plane.

As for the orbifold points of order 3, there are twenty of them. They correspond to the twenty points on the Shimura curve isomorphic to $\D/\Delta(2,6,6)$ admitting proper real multiplication by $\O_{2828}$. In \autoref{fig:example}, we depict the preimage of these 20 points in $\D$, that is the points $f(a,b,c,2828)$ as in~\autoref{prop:conditionorbpt3}.

The number of cusps is $C(\W[D=2828])=68$, the curve is connected, and the Euler characteristic is $\chi(\W[D=2828])=-8245/3$. Therefore, by~\autoref{eq:invariants}, the genus is $g(\W[D=2828])=1333$.

\begin{figure}[!htp]
\centering
\begin{minipage}[b]{0.48\textwidth}
\includegraphics[width=\textwidth]{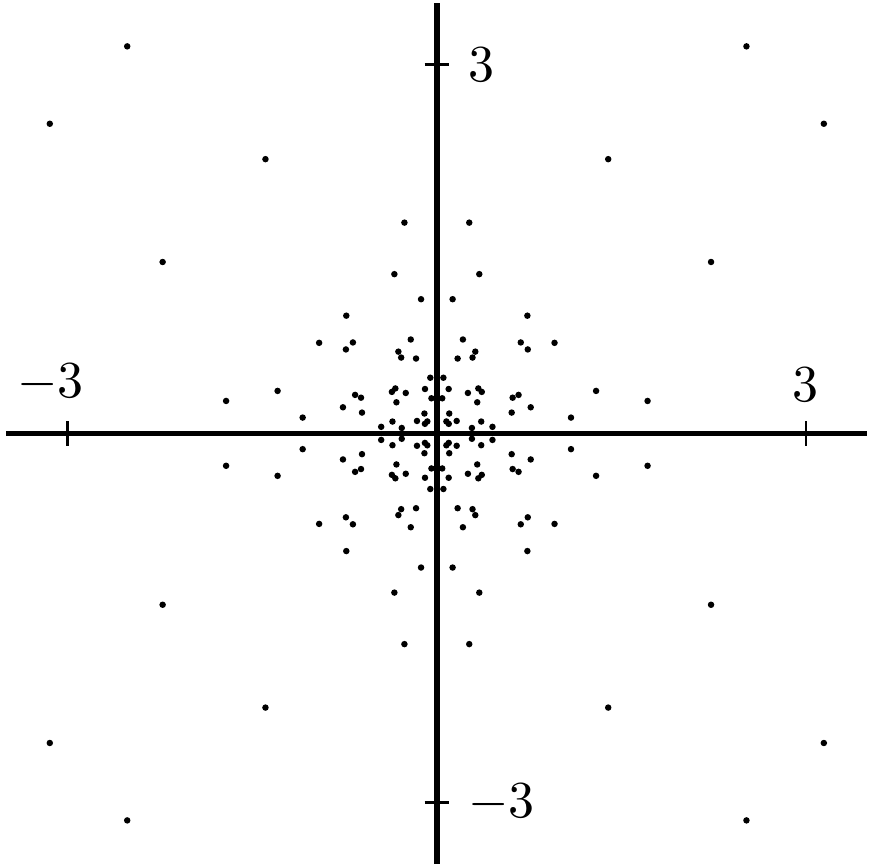}
\end{minipage}
\begin{minipage}[b]{0.48\textwidth}
\includegraphics[width=\textwidth]{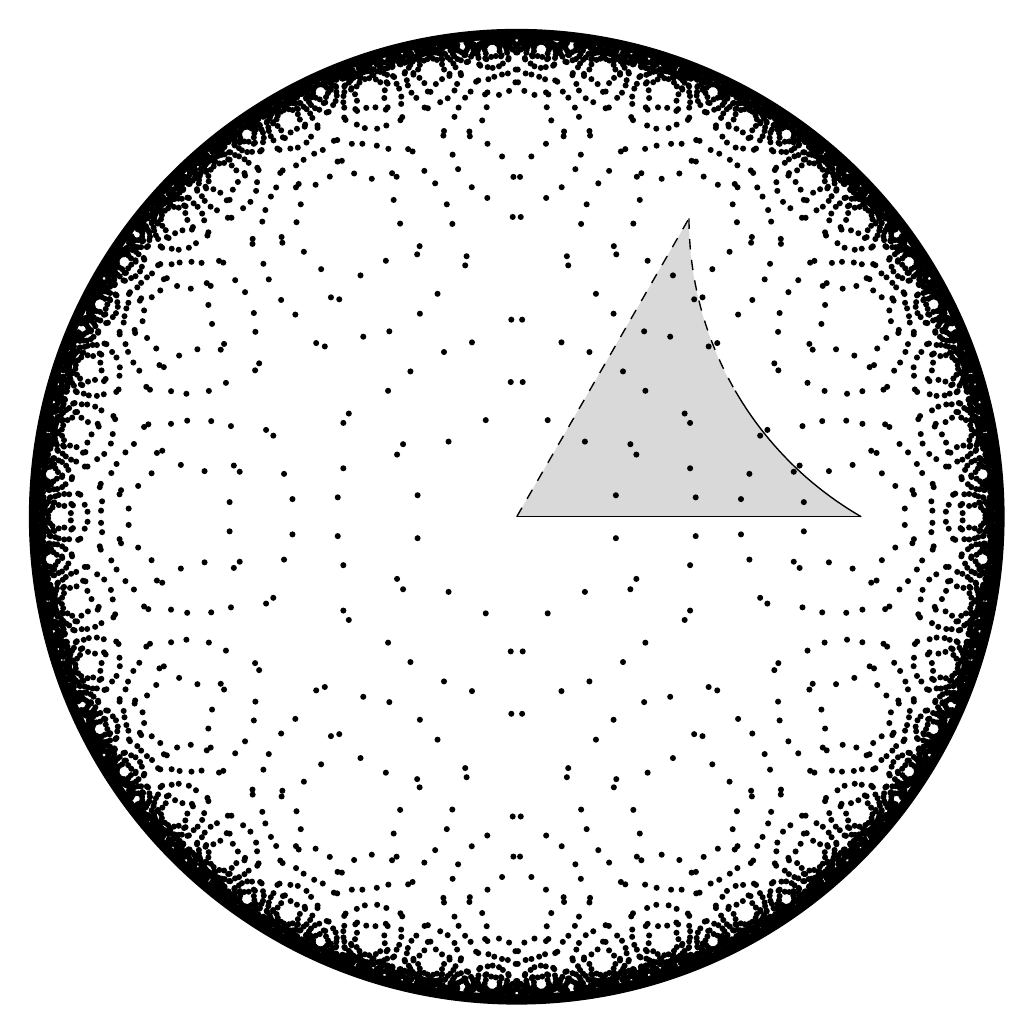}
\end{minipage}
\caption{Orbifold points of order 2 and 3 in $\W[D=2828]$.}\label{fig:example}
\end{figure}

\begin{table}[ht]
\centering
\begin{tabular}{r | r | r | r | r | r}
  $D$ & $\chi$ & $C$ & $g$ & $e_{2}$ & $e_{3}$  \\\hline
17&-5/3&3&0&0&1\\
20&-5/2&4&0&1&0\\
24&-5/2&4&0&1&0\\
28&-10/3&4&0&0&2\\
32&-5&7&0&0&0\\
33&-5&7&0&0&0\\
40&-35/6&6&0&1&2\\
41&-20/3&8&0&0&1\\
44&-35/6&6&0&1&2\\
48&-10&10&1&0&0\\
52&-25/2&12&1&1&0\\
56&-25/3&6&1&2&2\\
57&-35/3&11&1&0&1\\
60&-10&8&2&0&0\\
65&-40/3&12&1&0&2\\
68&-15&14&1&2&0\\
72&-25/2&10&2&1&0\\
73&-55/3&17&1&0&2\\
76&-95/6&14&1&1&2\\
80&-20&16&3&0&0\\
84&-25&16&5&2&0\\
88&-115/6&16&1&1&4\\
89&-65/3&15&4&0&1\\
92&-50/3&8&4&0&4\\
96&-30&20&6&0&0\\
97&-85/3&21&4&0&2\\
104&-125/6&10&5&3&2\\
105&-30&18&7&0&0\\
108&-45/2&14&5&1&0\\
112&-40&24&9&0&0\\
113&-30&18&6&0&3\\
116&-75/2&20&9&3&0\\
120&-85/3&12&8&2&2\\
124&-100/3&16&9&0&2\\
128&-40&22&10&0&0\\
129&-125/3&25&9&0&1\\
132&-45&30&8&2&0\\
136&-115/3&20&9&2&2\\
137&-40&22&9&0&3\\
140&-95/3&12&9&2&4\\
145&-160/3&32&11&0&2\\
148&-125/2&36&14&1&0\\
152&-205/6&12&10&3&4\\
153&-50&30&11&0&0\\
156&-130/3&16&14&0&2\\
160&-70&42&15&0&0\\
161&-160/3&22&16&0&2\\
\end{tabular}
\qquad
\begin{tabular}{r | r | r | r | r | r}
  $D$ & $\chi$ & $C$ & $g$ & $e_{2}$ & $e_{3}$  \\\hline
164&-60&32&14&4&0\\
168&-45&16&15&2&0\\
172&-105/2&22&14&1&6\\
176&-70&30&21&0&0\\
177&-65&31&18&0&0\\
180&-75&32&22&2&0\\
184&-185/3&22&19&2&4\\
185&-190/3&26&19&0&2\\
188&-140/3&12&17&0&4\\
192&-80&36&23&0&0\\
193&-245/3&39&21&0&4\\
200&-325/6&18&17&3&4\\
201&-245/3&37&23&0&1\\
204&-65&28&19&2&0\\
208&-100&48&27&0&0\\
209&-235/3&35&22&0&2\\
212&-175/2&28&30&3&0\\
216&-135/2&32&18&3&0\\
217&-290/3&42&27&0&4\\
220&-230/3&32&22&0&4\\
224&-100&34&34&0&0\\
228&-105&46&30&2&0\\
232&-165/2&30&25&1&6\\
233&-265/3&29&29&0&5\\
236&-425/6&26&22&3&2\\
240&-120&40&41&0&0\\
241&-355/3&49&35&0&2\\
244&-275/2&52&43&3&0\\
248&-70&14&26&4&6\\
249&-115&45&36&0&0\\
252&-80&24&29&0&0\\
257&-100&34&33&0&3\\
260&-120&48&36&4&0\\
264&-280/3&32&30&4&2\\
265&-400/3&56&39&0&2\\
268&-205/2&30&35&1&6\\
272&-120&44&39&0&0\\
273&-370/3&38&43&0&2\\
276&-150&40&55&4&0\\
280&-335/3&36&37&2&4\\
281&-125&45&40&0&3\\
284&-290/3&20&38&0&4\\
288&-150&54&49&0&0\\
292&-165&74&46&2&0\\
296&-205/2&22&38&5&6\\
297&-135&49&44&0&0\\
300&-325/3&28&40&2&2\\
\end{tabular}
\medskip
\caption{Topological invariants of the Prym-Teichmüller curves $\W$ for $D$ up to \maxD{}. For $D\equiv 1\mod 8$, we give the homeomorphism type of one of the two homeomorphic components, cf. \cite{components}.}\label{tab:thetable}
\end{table}

\section{Flat geometry of orbifold points}\label{sec:flat}

In this section we will briefly describe the translation surfaces corresponding to the \Yfamily{} and to the \Xfamily.

\medskip

Recall that, by~\autoref{lem:4foldforms}, the general member $\Y_{t}$ of the \Yfamily{} has only one differential with a single zero, namely $\omegaY_{2}$. Flat surfaces $(\Y_{t},\omegaY_{2})$ arise from the following \emph{double windmill} construction, which also explains the name \Yfamily: for each period $\tau\in\mathbb{C}$ consider the \enquote{blade} depicted on the left side of~\autoref{fig:windmill}, where $\overrightarrow{AF}=\tau$, $|AF|=|EF|$, $|AB|=|BC|$ and $|CD|=|DE|$. We normalise the differential by fixing the edge $\overrightarrow{AB}$ to be $\i$. Now take 6 copies of the blade and glue them together with side pairing as in the right side of the picture. One can check that this yields a genus 3 curve and that the corresponding differential has a unique zero, namely the black point in the picture. Moreover, there is an obvious order 6 automorphism $\alpha$ of the curve, induced by the composition of a rotation of order three on each of the two windmills and a rotation of order two of the whole picture around the white point on the common side of the two windmills. This automorphism fixes the black point and exchanges cyclically the three white points, the two centres of the windmills and the two crossed points, respectively.

It is again easy to check that $\alpha^{3}$ corresponds to the Prym involution. Therefore, the corresponding curve belongs to the \Yfamily{}. The black point corresponds to the preimage of~$\infty$ under the cyclic cover $\Y_{t}\rightarrow\P^{1}$, the three white points correspond to the preimages of $t$, the two crossed points to the preimages of 1 and the centres of the windmills to the two preimages of 0.

\begin{figure}%[!htp]
\begin{minipage}{\textwidth}
  \centering
  \raisebox{-0.5\height}{\includegraphics[width=0.23\textwidth]{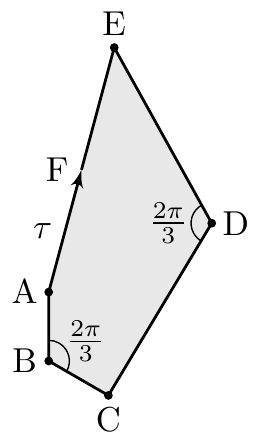}}
  \hspace*{0.01\textwidth}
  \raisebox{-0.5\height}{\includegraphics[width=0.65\textwidth]{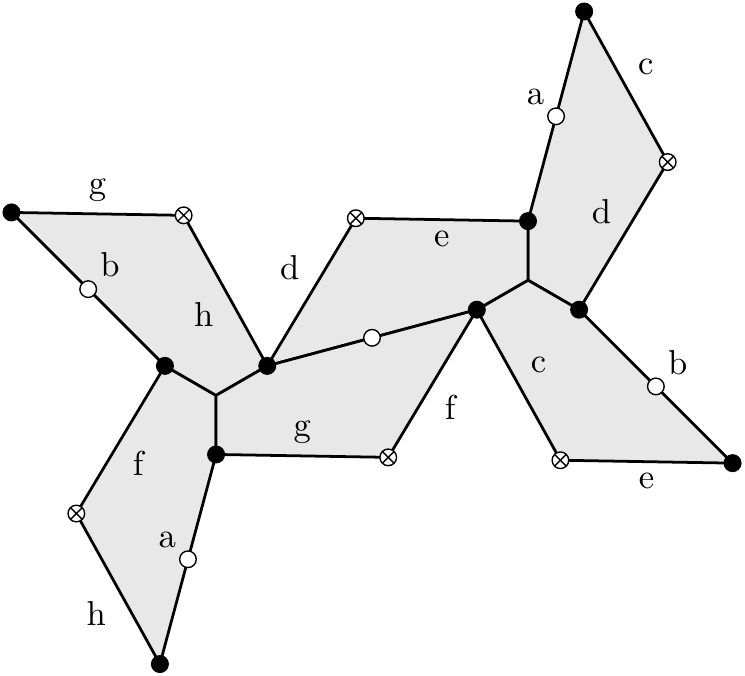}}
\end{minipage}
\caption{Double windmill for the period $\tau\in\mathbb{C}$.}\label{fig:windmill}
\end{figure}

\

\noindent{\bf Example 4}. The special point $\Y_{\nicefrac{1}{2}}$ has an extra automorphism $\beta$ of order 12. The corresponding flat surface is depicted below in~\autoref{fig:windmill_special}. The automorphism $\beta$ corresponds to first rotating each of the blades by $\pi/2$ around each of the white points and reglueing, and then composing with $\alpha$.

\

\noindent{\bf Example 5}. Each component of the Prym-Teichmüller curve $\W[D=17]$ has one orbifold point of order $3$ (cf. \cite{components}). Using the lengths described in~\cite{lanneaunguyen} and Mukamel's implemented algorithm from~\cite{mukamelfundamental}, one finds that this orbifold point corresponds to the $S$-shaped table depicted in~\autoref{fig:STable_disc17}, where
\begin{align*}
	d &= \left( \frac{11\sqrt{17} - 35}{52}, -\sqrt{3}\cdot\frac{\left(17\sqrt{17} - 73\right)}{52}\right)\,, \\
	a &= \left(\frac{\sqrt{17} -1}{2}, 0 \right)\,, \\
	b = e &= \left(\frac{-\sqrt{17} + 5}{2}, 0 \right)\,, \\
	f = c &= \left( \frac{-3\sqrt{17} - 33}{52}, \sqrt{3}\cdot\frac{\left(7\sqrt{17} - 27\right)}{52}\right)\,.
\end{align*}	

\begin{figure}%[!htp]
\centering
\includegraphics[scale=0.8]{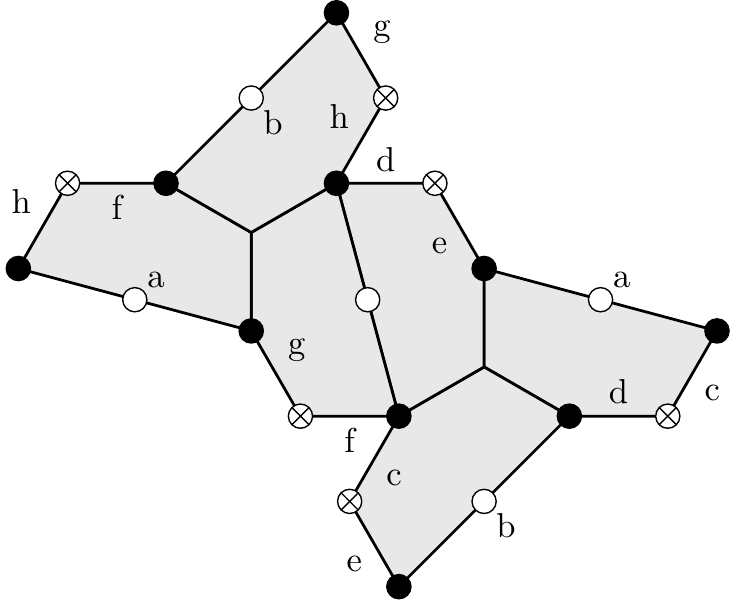}
\caption{Double windmill corresponding to the special point $(\Y_{\nicefrac{1}{2}},\omegaY_{2})$.}
\label{fig:windmill_special}
\end{figure}

\begin{figure}%[!htp]
\centering
\includegraphics{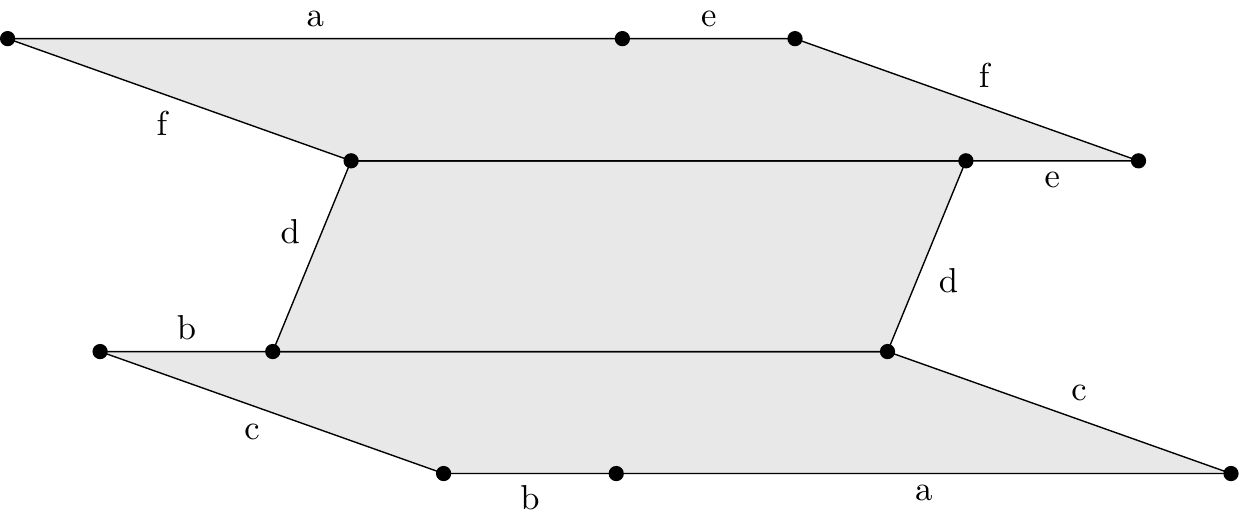}
\caption{$S$-shaped table for the orbifold point of order 3 on $\W[D=17]$ (Y axis scaled by a factor of 5).}
\label{fig:STable_disc17}
\end{figure}

Since the automorphism $\alpha$ of order 6 fixes the zero of the differential and interchanges cyclically the preimages of 0, 1 and $t$ respectively, one can easily detect these points. The three preimages of $t$ are, together with the preimage of~$\infty$, the fixed points of the Prym involution, which is just a rotation of the whole picture through an angle of $\pi$. Therefore, they correspond to the centre of the $S$-table and to the midpoints of edges $a$ and $d$.

As for the preimages of 0 and 1, they can be found as the fixed points of $\alpha^{2}$. Since the angle around the zero of the differential is $10\pi$, the automorphism $\alpha$ corresponds to a rotation of angle $10\pi/6$ around that point.

Cutting appropriately the $S$-shaped table into pieces and reglueing them yields the double windmill in~\autoref{fig:windmill_disc17}. Note that in this case the differential is not normalised in the same way as in our construction.

\begin{figure}%[!htp]
\centering
\includegraphics{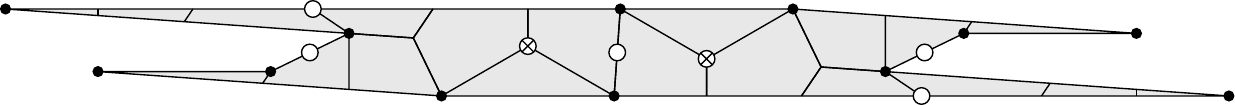}

\includegraphics{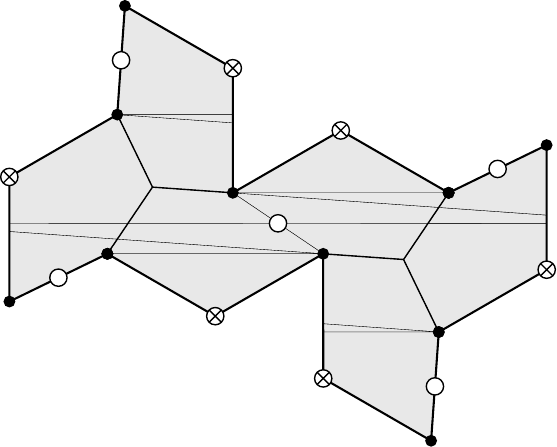}
\caption{Double windmill cut and pasted from the $S$-shaped table corresponding to the orbifold point of order three on $\W[D=17]$.}
\label{fig:windmill_disc17}
\end{figure}

\

One can similarly construct the flat surfaces associated to the \Xfamily{} via the following \emph{four-leaf clover} construction, which is again responsible for the name. Let us consider the differential $\omegaX_{1}$ in $\Omega(\X_{t})$, which by~\autoref{lem:4foldforms} has a zero at the preimage of~$\infty$. Flat surfaces $(\X_{t},\omegaX_{1})$ can be constructed in the following way: for each period $\tau\in\mathbb{C}$ we consider the \enquote{blade} on the left side of~\autoref{fig:wollmilchsau}, where $\overrightarrow{AF}=\tau$, $|AF|=|EF|$, $|AB|=|BC|$ and $|CD|=|DE|$. We again normalise the differential by fixing the edge $\overrightarrow{AB}$ to be~$\i$. Now we glue 4 copies of the blade with side pairings as in the right side of the picture. Again, this yields a genus 3 curve together with an abelian differential with a single zero, namely the black point in the picture. The order 4 automorphism $\alpha$ induced by a rotation of order four around the centre of the windmill fixes four points: the centre, the black point, the white point and the crossed point. The square $\alpha^{2}$ corresponds to the Prym involution, and therefore the corresponding curve belongs to the \Xfamily{}. In our construction, the black point corresponds to the preimage of~$\infty$ under the cyclic cover $\X_{t}\rightarrow\P^{1}$, the white point to the preimage of $t$, the crossed point to the preimage of 1 and the centre of the windmill to the preimage of 0.

\begin{figure}%[!htp]
\begin{minipage}{\textwidth}
  \centering
  \raisebox{-0.5\height}{\includegraphics[width=0.25\textwidth]{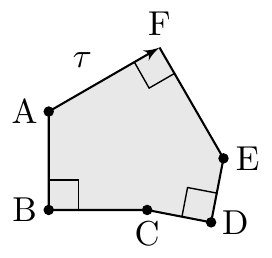}}
  \hspace*{0.05\textwidth}
  \raisebox{-0.5\height}{\includegraphics[width=0.45\textwidth]{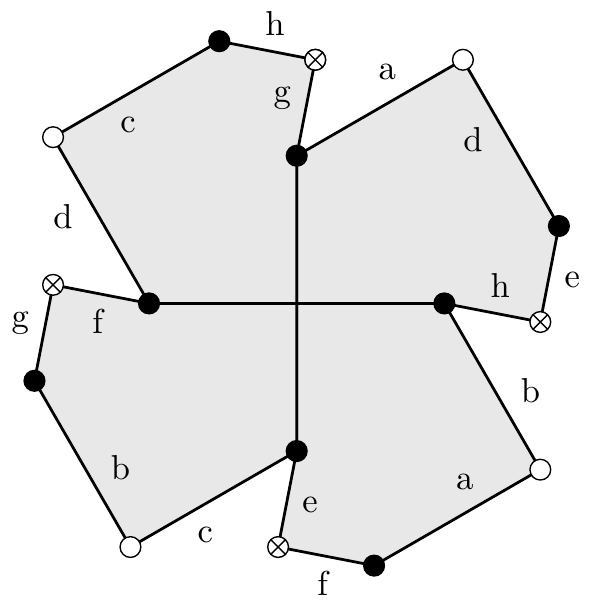}}
\end{minipage}
\caption{Flat surface corresponding to $(\X_{t},\omegaX_{1})$ for the period $\tau\in\mathbb{C}$.}\label{fig:wollmilchsau}
\end{figure}

%\nocite{*}
\FloatBarrier

\emergencystretch=3em
\printbibliography

\end{document}